\setlist[enumerate]{label={\upshape(\roman*)}}
\newtheorem{theorem*}{Theorem}[section]
\newtheorem{theorem}{Theorem}[section]
\newtheorem{lemma}[theorem]{Lemma}
\newtheorem{corollary}[theorem]{Corollary}
\newtheorem{proposition}[theorem]{Proposition}
\theoremstyle{remark}
\newtheorem{definition}[theorem]{Definition}
\newtheorem{example}[theorem]{Example}
\newtheorem{remark}[theorem]{Remark}
\DeclareMathOperator{\rank}{rank}
\DeclareMathOperator{\diag}{diag}
\newcommand{\spans}{\operatorname{span}}
\newcommand{\range}{\operatorname{ran}}
\newcommand{\SC}{\mathcal{S}}
\newcommand{\be}{\begin{equation}\label} 
\newcommand{\ee}{\end{equation}}
\newcommand{\bq}{\begin{equation*}}
\newcommand{\eq}{\end{equation*}}
\newcommand{\ba}{\begin{align*}}
\newcommand{\ea}{\end{align*}}
\newcommand{\bp}{\begin{proof}}
\newcommand{\ep}{\end{proof}}
\newcommand{\bL}{\begin{lemma}\label}
\newcommand{\eL}{\end{lemma}}
\newcommand{\bP}{\begin{proposition}\label}
\newcommand{\eP}{\end{proposition}}
\newcommand{\bC}{\begin{corollary}\label}
\newcommand{\eC}{\end{corollary}}
\newcommand{\bT}{\begin{theorem}\label}
\newcommand{\eT}{\end{theorem}}
\newcommand{\bTT}{\begin{theorem*}\label}
\newcommand{\eTT}{\end{theorem*}}
\newcommand{\bR}{\begin{remark}\label}
\newcommand{\eR}{\end{remark}}
\newcommand{\bD}{\begin{definition}\label}
\newcommand{\eD}{\end{definition}}
\newcommand{\bE}{\begin{example}\label}
\newcommand{\eE}{\end{example}}
\newcommand{\Sc}{\mathcal{S}}
\theoremstyle{remark}
\newtheorem{lem}{Lemma}[section]
	\newtheorem{rem}[lem]{\textit{Remark}}
\title{\textbf{Automatic selfadjoint-ideal semigroups for finite matrices
}}
\begin{document}
\author{Sasmita Patnaik*}
\address{Department of Mathematics and Statistics, Indian Institute of Technology, Kanpur 208016 (INDIA)}
\email{sasmita@iitk.ac.in}
\author{Sanehlata}
\address{Department of Mathematics and Statistics, Indian Institute of Technology, Kanpur 208016 (INDIA)}
\email{snehlata@iitk.ac.in}
\author{Gary Weiss**}
\address{Department of Mathematics, University of Cincinnati, Cincinnati, OH 45221-0025 (USA)}
\email{gary.weiss@uc.edu}

\thanks{*Supported by Science and Engineering Research Board, Core Research Grant 002514. \quad\\
  **Partially supported by Simons Foundation collaboration grants 245014 and 636554.}

\subjclass[2020]
{Primary: 47D03, 20M12, 20M05, 15A20, 15A24 \\
Secondary: 47A05, 47A65, 20M10, 15A06, 15A18}

\maketitle	
	
\begin{abstract}

The notion of automatic selfadjointness of all ideals in a multiplicative semigroup of 
the bounded linear operators on a separable Hilbert space $B(\mathcal H)$ arose in a 2015 discussion with Heydar Radjavi who pointed out that $B(\mathcal H)$ and the finite rank operators $F(\mathcal H)$ possessed this unitary invariant property which category we named SI semigroups (for automatic selfadjoint ideal semigroups). Equivalent to the SI property is the solvability, for each $A$ in the semigroup, of the \textit{bilinear} operator equation $A^* = XAY$ which we believe is a new connection relating  
the semigroup theory with the theory of operator equations.

We found in our earlier works in the subject that even at the basic level of singly generated semigroups, the investigation of SI semigroups led to interesting algebraic and analytic phenomena when generated by rank one operators, normal operators, partial and power partial isometries, subnormal-hyponormal-essentially normal operators, and weighted shift operators; and generated by commuting families of normal operators.

In this paper, we focus on a separate $M_n(\mathbb C)$ treatment for singly generated SI semigroups 
that requires studying 
the solvability of the bilinear matrix equation $A^* = XAY$ in a multiplicative semigroup of finite matrices.
This separate focus is needed because the techniques employed in our earlier works we could not adapt to finite matrices. 
In this paper we find that for certain classes of generators, being a partial isometry is equivalent to generating an SI semigroup. Such classes are: degree $2$ nilpotent matrices, weighted shifts, and non-normal Jordan matrices. For the key tools used to establish these equivalences, we developed a number of necessary conditions for singly generated semigroups to be SI for the very general classes: nonselfadjoint matrices, nonzero nilpotent matrices, nonselfadjoint invertible matrices, and Jordan blocks. 
We also show, for a nonselfadjoint matrix generator in an SI semigroup, the matrix being a partial isometry is equivalent to having norm one. And as an aside, we also prove necessary generator conditions for the SI property when generated by matrices with nonnegative entries.
\end{abstract} 

\noindent Keywords: Selfadjoint-ideal semigroup, matrix equation, Jordan matrix, partial isometry, singular number, nonnegative matrix

\section{introduction}
In operator theory, an important area of study is the solvability of linear and bilinear operator equations in the algebra of operators $B(\mathcal H)$ acting on finite or infinite-dimensional Hilbert spaces; for instance, operator equations of the form $AX = Y$, $AX - XB = Y$, $AX - YB = C$, or $B = XAY$ (see \cite{RR}, \cite{FW}, \cite[Section 8]{SO} and the references therein). 

In particular, the need to study the solvability of the
\textit{bilinear} operator equations $A^* = XAY$ in a \textit{multiplicative semigroup} arose in a 2015 discussion with Heydar Radjavi who observed that this operator equation is solvable in the multiplicative semigroups $B(\mathcal H)$ and $F(\mathcal H)$, respectively, or equivalently and more to his point, every multiplicative ideal inside each of these is automatically selfadjoint (i.e., contains all its adjoints). (Here $F(\mathcal H)$ denotes the algebra of finite rank operators.) And he asked which multiplicative subsemigroups of $B(\mathcal H)$ and $F(\mathcal H)$ share this selfadjointness property for all its multiplicative ideals? Going forward unless we specify otherwise, we view $B(\mathcal H)$ as a multiplicative semigroup and refer to its multiplicative subsemigroups as merely semigroups. 

The first and third author of this paper in \cite[Lemma 1.8]{PW21} noticed that the solvability of $A^* = XAY$ for each $A$ in a semigroup is equivalent to the automatic selfadjointness of all the principal ideals generated by $A$ for each $A$ in the semigroup, and a semigroup with this property (all ideals or equivalently just principal ideals) we called a selfadjoint-ideal semigroup (SI semigroup, for short). See Definitions (\ref{D1}-\ref{D6}) and Terminology below. For instance, from this algebraic point of view, one can say that $B(\mathcal H)$ and $F(\mathcal H)$ are SI semigroups. But on the other hand, the singly generated selfadjoint semigroup $\Sc(T,T^*) $ generated by $T$, with $T$ a nonselfadjoint
operator and $||T|| < 1$, is never an SI semigroup (see \cite[Example 1.23]{PW21}). It was natural then to ask if it is possible to characterize, in $B(\mathcal H)$, all its SI semigroups in some way. We found this SI property interesting because it turned out to be a unitary invariant of semigroups in $B(\mathcal H)$ and hence a useful tool in distinguishing between them up to unitary equivalence, and sometimes in determining their simplicity (i.e., whether or not they have no proper multiplicative ideals).  

In \cite{PW21}, we initiated a systematic study of SI semigroups by first investigating the SI property of the singly generated selfajdoint semigroups generated by a single $T \in B(\mathcal H)$ and denoted by $\mathcal S(T, T^*)$ (all finite products of $T$ and $T^*$) since the study of the general structure of selfadjoint semigroups seemed unexplored. For a selfadjoint operator $T$, $\Sc(T, T^*)$ consists of only powers of $T$ and hence is automatically an SI semigroup because every ideal is automatically selfadjoint. So we restricted our study to singly generated selfadjoint semigroups generated by a nonselfadjoint operator. We focused on the attributes of $T$ that are necessary or necessary and sufficient to guarantee that their singly generated semigroups  
$\Sc(T, T^*)$  possess the SI property. We obtained characterizations 
(necessary and sufficient conditions) for non-simple singly generated SI semigroups and simple singly generated semigroups generated by normals, partial isometries and among non-normals, rank-ones (for a summary of those characterizations see \cite{PW21} succeeding Theorem 3.20). Recently in \cite{PWS} joint with the second author, we expanded our investigation 
to the study of singly generated SI semigroups generated by unilateral weighted shift operators, hyponormal operators, essentially normal operators, and generated by commuting families of normal operators. We studied how the SI property for a singly generated semigroup impacts the generator in terms of its spectral density when it is hyponormal and when it is essentially normal \cite[Section 3]{PWS}. 
As stated in \cite[2nd paragraph preceding Theorem 1.21]{PW21},
we would like to alert the reader that the fact that semigroups may not be closed under scalar multiples, 
 even in the simplist cases of rank-one $T$,
 substantially complicated our proof of the characterization of SI semigroups
$\Sc(T,T^*)$ generated by $T$ \cite[Theorem 3.16]{PW21}. 

Here in this program we focus on higher rank operators in $F(\mathcal H)$ which is equivalent to  
studying 
the SI characterization question in $M_n(\mathbb C)$, viewed as a multiplicative semigroup, by first investigating the SI property of its singly generated selfadjoint subsemigroups with generator $T$  for special classes of generators, since the study of the SI property of singly generated selfadjoint semigroups generated by a general $T$ seemed intractable and remains open to us. Our original aim here was to focus on Jordan matrices (Section \ref{S4}), 
but found this to require a number of preliminary somewhat general results on certain classes of matrices (Sections \ref{S2}-\ref{S4}), which we briefly discuss and summarize below.

The techniques employed in our earlier study of singly generated SI semigroups in $B(\mathcal H)$ (for operators acting on infinite-dimensional Hilbert space) we could not adapt to the finite-dimensional case, hence our  separate focus herein on the SI property of semigroups in $M_n(\mathbb C)$. Our approach to this study is to investigate the special forms associated with a finite matrix motivated by the following reasoning. From linear algebra we recall that a finite matrix is unitarily equivalent to a triangular matrix (and strictly upper or lower triangular if and only if nilpotent); and a finite matrix is similarity equivalent to a matrix which is either in rational cannonical form or in Jordan cannonical form. We know from \cite[Theorem 1.21]{PW21} that if $A$ and $B$ are unitarily equivalent then $\Sc(A,A^*)$ is SI if and only if $\Sc(B, B^*)$ is SI, but \cite[Remark 3.22]{PW21} shows that the SI property of semigroups is not a similarity invariant property.
But when $A$ is a triangular matrix or $A$ is in the rational cannonical form, the computations involved in solving the bilinear matrix equations $A^* = XAY$ in $\Sc(A, A^*)$ seemed intractable. 
So despite the fact that Jordan matrices are not totally general (in the unitarily equivalent sense), they are direct sums of Jordan blocks which are special triangular matrices, and with obtaining some preliminary somewhat general results, we were able to completely characterize those Jordan matrices that singly generate an SI semigroup. 

To summarize, in this paper we investigate singly generated SI semigroups of finite matrices by offering a new perspective on the interplay between the study of a particular matrix equation with the structure of the generating matrix of the SI semigroup. 
And in some cases, this leads to a classification of non-simple SI semigroups and simple semigroups.
The main result of this paper is Theorem \ref{jordan} where we provide a characterization of those singly generated SI semigroups $\Sc(T, T^*)$ generated by a Jordan matrix (see the definition in Section \ref{S4}, third paragraph). This also provides an alternate characterization of power partial isometry. (A power partial isometry is a partial isometry with all its powers also partial isometries.) When $T$ belongs to either the class of nilpotent matrices of degree $2$ or the class of weighted shift matrices, here also we provide an alternate SI characterization of partial isometry: $T$ is a partial isometry if and only if $\Sc(T, T^*)$ is SI (see Corollaries \ref{cor2.1}-\ref{corollary2.6}). 
Moreover, as we know partial isometries have norm one but not all norm one matrices are partial isometries, in the SI environment they are equivalent. That is, we prove that under the SI property of $\Sc(T, T^*)$ generated by a nonselfadjoint matrix $T$, if $||T|| =1$ then $T$ is a partial isometry (Theorem \ref{p}).

We found that the general problem of characterizing SI semigroups $\Sc(T, T^*)$ for a general matrix $T$ is still open but the complications we encountered in dealing with the case of Jordan matrices illustrates the value of sparsifying a matrix via unitary equivalence because of the unitary invariance of SI semigroups and the motivation for focusing first on Jordan matrices.


\subsection{Preliminaries}\label{S1}
\textbf{Definitions (\ref{D1}-\ref{D6}) and Terminology}

We recall below the general $B(\mathcal H)$ definitions and terminology from \cite{PW21}, but instead for finite matrices.

\bD{D1}
A semigroup $\Sc$ in $M_n(\mathbb C)$ is a subset closed under multiplication. 
 A selfadjoint semigroup $\Sc$ is a semigroup also closed under adjoints,  i.e., $\Sc^* := \{T^* \mid T \in \Sc\} \subset \Sc$. 
\eD
 
\bD{D3}
 An ideal $J$ of a semigroup $\mathcal S$ in $M_n(\mathbb C)$ is a subset of $\Sc$ closed under products of operators in $\Sc$ and $J$. That is, $XT, TY \in J$ for $T \in J$ and $X, Y \in \mathcal S$. And so also $XTY \in J$. 
 \eD

The next definition is new to the field of multiplicative $B(\mathcal H)$-semigroups, motivated by Radjavi and first published in \cite{PW21}.

   \bD{D4} A selfadjoint-ideal (SI) semigroup $\Sc$ in $M_n(\mathbb C)$ is a semigroup for which every ideal $J$ of $\Sc$ is closed under adjoints, i.e., $J^{*} := \{T^{*} \mid T \in J\} \subset J.$  
 \eD
Because this selfadjoint ideal property in Definition \ref{D4} concerns selfadjointness of all ideals in a semigroup, we call these semigroups  selfadjoint-ideal semigroups (SI semigroups for short). 

\vspace{.2cm}

\textit{Semigroups generated by $\mathcal A \subset M_n(\mathbb C)$}
\bD{D5}
The semigroup generated by a set $\mathcal A \subset  M_n(\mathbb C)$, denoted by $\mathcal S(\mathcal A)$, is the intersection of all semigroups containing $\mathcal A.$ Also define $\mathcal A^*:= \{A^* | A \in \mathcal A\}$.
\eD
For short we denote by $\Sc(T)$ the semigroup generated by $\{T\}$ (called generated by $T$ for short).
It should be clear for the semigroup $\mathcal S(\mathcal A)$ that Definition \ref{D5} is equivalent to the semigroup consisting of all possible words of the form $A_1A_2\cdots A_k$ where $k \in \mathbb N$ and $A_i \in \mathcal A$ for each $1\leq i \leq k$. 
 
 \bD{D6}
The selfadjoint semigroup generated by a set $\mathcal A \subset M_n(\mathbb C)$ denoted by $\Sc(\mathcal A \cup \mathcal A^{*})$ or $\Sc(\mathcal A,\mathcal A^{*})$, is the intersection of all selfadjoint semigroups containing  $\mathcal A \cup \mathcal A^{*}$. 
Let $\Sc(T, T^*)$ denote for short $\Sc(\{T\}, \{T^*\})$ and call it the singly generated selfadjoint semigroup generated by $T$.
 \eD
 
It is clear that $\Sc(\mathcal A, \mathcal A^{*})$ is a selfadjoint semigroup. Moreover, it is clear that Definition \ref{D6} conforms to the meaning of $\Sc(\mathcal A \cup \mathcal A^*)$ in terms of words discussed above. That is, it consists of all words of the form $A_1A_2\cdots A_k$ where $k \in \mathbb N$ and $A_i \in \mathcal A \cup \mathcal A^{*}$ for each $1\leq i \leq k$. 

The focus of this paper is the investigation of the singly generated SI semigroups $\Sc(T, T^*)$ generated by $T \in M_n(\mathbb C)$. So, we provide a description of the elements of $\Sc(T, T^*)$ here (\cite[Proposition 1.6]{PW21}).

For $T \in B(H)$, the semigroup $S(T, T^*)$ generated by the set $\{T, T^*\}$ is given by 
\begin{equation}\label{singly generated semigroup list}
 S(T, T^*) = \{T^n, {T^*}^n, \Pi_{j=1}^{k}T^{n_j}{T^*}^{m_j},  (\Pi_{j=1}^{k}T^{n_j}{T^*}^{m_j})T^{n_{k+1}}, \Pi_{j=1}^{k}{T^*}^{m_j}T^{n_j}, (\Pi_{j=1}^{k}{T^*}^{m_j}T^{n_j}){T^*}^{m_{k+1}}\}
\end{equation}
where $n \ge 1,\,  k\ge1,\, n_j, m_j \ge 1\, \text{for}\, 1 \le j \le k, ~\text{and}~n_{k+1}, m_{k+1} \geq 1$.
The product $\Pi_{j= 1}^{k}$ in the semigroup list is meant to denote an ordered product. 
Indeed, this follows directly by taking $\mathcal A = \{T\}$.

Alternatively, $\Sc(T, T^*)$ consists of: words only in $T$, words only in $T^*$, words that begin and end in $T$, words that begin with $T$ and end with $T^*$, and words that begin with $T^*$ and end with $T$ and words that begin and end with $T^*$.

\section{$\SC(T, T^*)$ characterizations and necessary conditions on $T$ \\
for the SI property and simplicity}\label{S2}

In \cite[Section 3]{PW21} we obtained a characterization 
(i.e., a set of necessary and sufficient conditions depending on the class in which $T$ resides) 
for the SI property of semigroups $\SC(T, T^*)$ generated by a rank-one operator $T$; and in some cases the SI property characterized the simplicity of $\SC(T, T^*)$. (A summary of this complete classification is provided in \cite[before Remark 3.21]{PW21}.) The various levels of difficulty and limited techniques at our disposal  complicated there our approach to  this case by case characterization for the SI semigroup $\SC(T, T^*)$ in this simplest  case of rank-ones. 
We began our study of general rank-one operators (instead of considering matrix forms) with the hope that we could extend our results to higher ranks, which turned out not to be the case. 
 So for us to make progress in the study of SI semigroups $\SC(T, T^*)$ generated by finite rank operators,  
here we reduce 
the study of SI semigroups $\SC(T, T^*)$ generated by finite ranks to the study of SI semigroups $\SC(T, T^*)$ generated by finite matrices using the following observation. 
And then we develop differing methods for various classes of generating matrices. 

Observe that for $T\in \mathcal{F}(\mathcal{H})$ (the set of finite rank operators on a Hilbert space $\mathcal H$), $T^*$ is also finite rank (via an argument using the polar decomposition), and hence the subspace $\mathcal H_n = T\mathcal H + T^*\mathcal H$ is a finite dimensional reducing subspace for $T$ with $T$ unitarily equivalent to  $T_n\oplus 0$ for some $T_n\in \mathcal{B}(\mathcal{H}_n)$. Furthermore, as every operator on a finite dimensional space is unitarily equivalent to an upper triangular matrix, $T_n$ is unitarily equivalent to an operator in $B(\mathcal H_n)$ whose matrix is upper triangular and hence $T$ has a basis in which its matrix has form $T_n \oplus 0$ and is upper triangular. In other words, $T\in \mathcal{F}(\mathcal{H})$ is unitarily equivalent to $T_n\oplus 0$ where the matrix representation of $T_n$ with respect to some orthonormal basis in $\mathcal H_n$ is upper triangular. 

On unitary invariance of the SI property, recall that if $T$ and $S$ are unitarily equivalent, then $\SC(T, T^*)$ is SI if and only if $\SC(S, S^*)$ is SI (a special case of \cite[Theorem 1.21]{PW21}). In particular, for $T$ a finite rank operator, as discussed in the previous paragraph, $T$ is unitarily equivalent to $T_n\oplus 0$ where the finite matrix representation of $T_n$ with respect to some orthonormal basis in $\mathcal H_n$ is upper triangular. Hence we have that $\SC(T,T^*)$ is SI if and only if $\SC(T_n, T^*_n)$ is SI. Recall also \cite[Lemma 1.9]{PW21} that in general, $\SC(T, T^*)$ having the SI property is equivalent to solving the equation $W^* = XWY$ for every word $W$ in $T$ and $T^*$ for 
$X, Y \in \SC(T, T^*) \cup \{I\}$. 
So herein our study of the SI property of $\SC(T,T^*)$ we reduce to the study of the SI property of $\SC(T_n, T^*_n)$. That is, we consider upper triangular matrix representations of $T_n$ to study the SI property of $\SC(T_n, T^*_n)$ and the solvability of the aforementioned Bilinear equation.

So the study of when $\SC(T,T^*)$ possesses the SI property for $T$ finite rank is reduced to the case when $T \in M_n(\mathbb C)$ and our first result in this direction is a necessary condition for $\SC(T, T^*)$ to be an SI semigroup when $T\in M_n(\mathbb{C})$ is nonselfadjoint. 
(When $T$ is selfadjoint, $\SC(T, T^*) = \{T^n \mid n \geq 1\}$ is clearly automatically SI. See \cite[Remark 1.13(i)-(ii)]{PW21} for a detailed discussion of this case.)

In summary, going forward here and in the last section we continue our investigation of SI semigroups $\SC(T, T^*)$ generated by a finite rank operator $T$ beyond rank-one by focusing on finite matrix cases.
In this section, we first provide a necessary condition for $\SC(T, T^*)$ to be an SI semigroup when $T$ is nonselfadjoint (Theorem \ref{thm 2.1} below). 
Then for the class of nilpotent matrices (equivalently those with unitarily equivalent strictly upper triangular matrix representations), we provide some new connections between the SI property of $\SC(T, T^*)$ and partial isometries. In particular, we give a necessary condition for $\SC(T, T^*)$ to be SI when generated by a nilpotent matrix (Corollary \ref{cnilpotent}); and as a consequence, we provide partial answers in Corollaries \ref{cor2.1}-\ref{corollary2.6} to \cite[Question 2.7]{PW21}: Characterize which partial isometries $T$ have their generated semigroups $\SC(T, T^*)$ possessing the SI property, and among those determine which 
possess the stronger property of simpleness. 
Earlier Popov--Radjavi had proved the alternate (to definition) characterization of power partial isometry: that an operator $T$ is a power partial isometry if and only if $\SC(T, T^*)$ consists of \textit{only} partial isometries \cite[Proposition 2.2]{HeyPop}. By combining this result with \cite[Corollary 1.15]{PW21}, Patnaik--Weiss in \cite[Remark 2.3]{PW21} proved that if $T$ is a power partial isometry, then $\SC(T, T^*)$ is SI. And regarding the converse, in Remark \ref{R2} below, using Corollary \ref{corollary2.6}, we obtain that the converse also holds if $T$ is a unilateral weighted shift matrix.

\begin{remark}\label{Remark1}
We note here that the next Propositions \ref{pnil}-\ref{prop2.3} are proved for $M_n(\mathbb C)$, but a simple argument shows they also hold for finite rank operators in $B(\mathcal H)$ because, as said earlier, every finite rank operator is unitarily equivalent to a finite rank operator whose matrix representation is the direct sum of a finite-dimensional upper triangular matrix and an infinite-dimensional zero matrix. We believe that Propositions \ref{pnil}-\ref{prop2.3} are well known, but we have presented them here for completeness and because they will be employed repeatedly in some later parts of this section. 
\end{remark}

	\begin{proposition}\label{pnil}
		For $T \in M_n(\mathbb{C})$ a nonzero nilpotent matrix, one has
		 $$\range T^*\not \subset \range T \quad \text{and} \quad \range T\not \subset \range T^*.$$
	\end{proposition}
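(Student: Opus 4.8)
The plan is to reduce both non-inclusions to a single statement and then exploit nilpotency through an invariant-subspace argument. The starting point is the standard fact that $\rank T = \rank T^*$, so that $\dim \range T = \dim \range T^*$. Since two subspaces of equal finite dimension satisfy an inclusion only when they coincide, each of the hypothesized inclusions $\range T^* \subset \range T$ or $\range T \subset \range T^*$ would force $\range T = \range T^*$. Thus it suffices to rule out the equality $\range T = \range T^*$ for a nonzero nilpotent $T$. I would further rephrase this in terms of kernels: using $\range T = (\ker T^*)^{\perp}$ and $\range T^* = (\ker T)^{\perp}$, the equality $\range T = \range T^*$ is equivalent to $\ker T = \ker T^*$.

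Next, assuming toward a contradiction that $M := \range T = \range T^*$, I would show that $M$ is a nonzero $T$-invariant subspace on which $T$ acts injectively. Invariance is immediate: every vector is sent by $T$ into $\range T = M$, so in particular $T(M) \subseteq M$. For injectivity on $M$, note that $\range T^* = M$ gives $\ker T = M^{\perp}$, whence $\ker(T|_M) = M \cap \ker T = M \cap M^{\perp} = \{0\}$. Being an injective endomorphism of the finite-dimensional space $M$, the restriction $T|_M$ is therefore invertible.

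Finally, the contradiction: as the restriction of a nilpotent operator to an invariant subspace, $T|_M$ is itself nilpotent, and a nilpotent operator can be invertible only on the zero space. Hence $M = \{0\}$, i.e. $\range T = \{0\}$ and $T = 0$, contradicting the hypothesis that $T$ is nonzero. This establishes $\range T \neq \range T^*$, and with it both non-inclusions.

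I do not expect a genuine obstacle here; the content is elementary linear algebra and, as the authors note, likely folklore. The only points requiring care are the upgrade of an inclusion to an equality via the dimension count $\rank T = \rank T^*$ and the translation between ranges and kernels via orthogonal complements. I would specifically avoid attacking the statement by a direct computation in an upper-triangular or strictly-triangular normal form, which tends to be messier; the invariant-subspace/kernel route above is cleaner and essentially coordinate-free.
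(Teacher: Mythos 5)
Your proof is correct, and it takes a genuinely different route from the paper. The paper argues concretely in coordinates: it first notes that both non-inclusions and nilpotency are unitarily invariant, reduces to a strictly upper triangular matrix $A$, picks the largest index $k$ with $A^*e_k \neq 0$, observes that $\range A \subset \spans\{e_1,\dots,e_k\}$ while $A^*e_k$ lies in $\spans\{e_{k+1},\dots,e_n\}\setminus\{0\}$, and finally obtains the second non-inclusion by applying the first to $T^*$. You instead collapse both non-inclusions into the single statement $\range T \neq \range T^*$ via the dimension count $\rank T = \rank T^*$, and then refute the equality by a coordinate-free invariant-subspace argument: $M := \range T = \range T^*$ is $T$-invariant, $T|_M$ is injective because $\ker T = M^{\perp}$, hence invertible on $M$, which is impossible for a nilpotent restriction unless $M = \{0\}$. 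Your version is basis-free, works verbatim on any finite-dimensional inner product space, and isolates the real content of the statement (a nonzero nilpotent matrix cannot satisfy $\ker T = \ker T^*$, i.e.\ cannot be range-symmetric); it also yields both non-inclusions simultaneously rather than by a symmetry step. What the paper's computation buys in exchange is consistency with its overall strategy of exploiting triangular forms, plus an explicit witness: it exhibits the specific basis vector $e_k$ whose image under $A^*$ escapes $\range A$, which is the kind of concrete information the later arguments in that section trade on.
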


\begin{proof} 
It is well-known that every square matrix $T$ is unitarily equivalent to an upper (or lower) triangular matrix $A$, and nilpotency being a unitary invariant, it is easily verified that the diagonal of $A$ must be zero.
And it is straightforward to verify that the range non-inclusions in the statement of the proposition are also unitarily invariant. Hence it suffices to prove the proposition for $A = [a_{ij}]$ a nilpotent upper triangular  matrix, that is, $a_{ij}=0$ for all $i\geq j$.

To obtain $\range A^* \not\subset \range A$,
let $k$ be the maximum index such that the column $A^{*}e_k \neq 0$ (such a $k$ exists as $A\neq 0$). 
Since $A$ has diagonal $0$,  $A^*e_n = \bar{a}_{nn}e_n = 0$ and hence $k<n$. 
Then also $a_{ij}=0$ for each $i> k$ and for all $j$.  
This implies that the span of the $A$ columns, that is, $\range A  \subset \spans \{e_1, \cdots, e_k\}$. 
Since $A^*$ is strictly lower triangular and $A^{*}e_k \neq 0$, $A^*e_k = \bar{a}_{k,k+1}e_{k+1} + \bar{a}_{k,k+2}e_{k+2}+...+ \bar{a}_{k,n}e_{n}$ where at least one of the coefficients is nonzero. Therefore $\range A^* \not\subset \range A$ or equivalently, $\range T^* \not\subset \range T$.

To obtain $\range T \not\subset \range T^*$, since the adjoint of a nilpotent matrix is nilpotent, apply the previous case to $T^*$.
\end{proof}

\begin{proposition}\label{prop2.2}
	For $A,B\in M_n(\mathbb{C})$,  $\rank(AB)\leq \min\{\rank A, \rank B\}$.
	More generally, for $A_1, \cdots, A_k \in M_n(\mathbb{C})$, $\rank(A_1\cdots A_k) \leq \min\{\rank A_1,  \cdots, \rank A_k\}$.
	\end{proposition}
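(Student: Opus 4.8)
The plan is to establish the two-matrix case $\rank(AB)\leq\min\{\rank A,\rank B\}$ first and then bootstrap to the general statement by induction on $k$. For the two-matrix case there are two inequalities to prove. The first, $\rank(AB)\leq\rank A$, follows immediately from the range inclusion $\range(AB)\subseteq\range A$, since every vector of the form $ABx$ equals $A(Bx)\in\range A$; taking dimensions gives the bound. This is the easy direction.

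For the second inequality, $\rank(AB)\leq\rank B$, I would use the nullspace containment $\ker B\subseteq\ker(AB)$, which holds because $Bx=0$ forces $ABx=0$. Passing to dimensions gives $\dim\ker(AB)\geq\dim\ker B$, and then the rank-nullity theorem in $\mathbb{C}^n$, namely $\rank M=n-\dim\ker M$, converts this into $\rank(AB)\leq\rank B$. Alternatively, one can invoke $\rank M=\rank M^*$ together with the already-proved first inequality applied to $(AB)^*=B^*A^*$, giving $\rank(AB)=\rank(B^*A^*)\leq\rank B^*=\rank B$; either route is routine. Combining the two inequalities yields $\rank(AB)\leq\min\{\rank A,\rank B\}$.

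For the general case I would induct on $k$, the base case $k=2$ being what was just shown (and $k=1$ being trivial). Writing $A_1\cdots A_k=(A_1\cdots A_{k-1})A_k$ and applying the two-matrix bound gives
\begin{equation*}
\rank(A_1\cdots A_k)\leq\min\{\rank(A_1\cdots A_{k-1}),\,\rank A_k\}.
\end{equation*}
By the inductive hypothesis $\rank(A_1\cdots A_{k-1})\leq\min\{\rank A_1,\dots,\rank A_{k-1}\}$, and substituting this into the display collapses the right-hand side to $\min\{\rank A_1,\dots,\rank A_k\}$, completing the induction.

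I expect no genuine obstacle here, as the result is elementary; the only point requiring a moment's care is the second inequality $\rank(AB)\leq\rank B$, since it is the one that is not visible directly from range inclusions and needs either the kernel/rank-nullity argument or the passage to adjoints. Everything else is bookkeeping, and the inductive step is purely formal once the two-matrix case is in hand.
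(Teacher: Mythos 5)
Your proposal is correct and follows essentially the same route as the paper: range inclusion $\range(AB)\subset\range A$ for the first inequality, and then induction via $A_1\cdots A_k=(A_1\cdots A_{k-1})A_k$. For the second inequality $\rank(AB)\leq\rank B$, the paper uses exactly your ``alternative'' argument, namely $\range(AB)^*=\range(B^*A^*)\subset\range B^*$ combined with $\rank M=\rank M^*$; your primary kernel/rank-nullity route is an equally valid minor variant of the same elementary fact.
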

\begin{proof} Recall that for $T\in M_n(\mathbb{C}), \rank T=\rank T^*$. Since $\range AB \subset \range A$, $\rank(AB)\leq \rank A$. Also, $\range(AB)^*= \range(B^*A^*)\subset \range B^*$, which implies that $\rank(AB) \leq \rank(B)$. Therefore, $ \rank(AB)\leq \min\{\rank A, \rank B\}$. By induction this holds for any finite product of matrices in $M_n(\mathbb{C})$.
	\end{proof}

\begin{proposition}\label{prop2.3}
For $A$ a nonzero nilpotent finite matrix, one has $$\rank A^2<\rank A.$$
\end{proposition}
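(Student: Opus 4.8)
The plan is to argue by contradiction, exploiting the fact that once the ranks of successive powers of $A$ stop strictly decreasing they must stabilize, which collides with nilpotency. Since Proposition \ref{prop2.2} already supplies the inequality $\rank A^2 \le \rank A$, it suffices to rule out equality. So I would suppose instead that $\rank A^2 = \rank A$ and derive a contradiction.

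The key step is a stabilization observation for the ranges of powers of $A$. Since $\range A^2 = A(\range A) \subset \range A$, the assumed equality of ranks forces $\range A^2 = \range A$, because a subspace contained in another of the same finite dimension must coincide with it. Applying $A$ to both sides and iterating, I would prove by induction that $\range A^{k} = \range A$ for every $k \ge 1$: if $\range A^{k} = \range A$, then $\range A^{k+1} = A(\range A^{k}) = A(\range A) = \range A^2 = \range A$.

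Now nilpotency delivers the contradiction. Since $A$ is nilpotent there is some $m$ with $A^m = 0$, whence $\range A^m = \{0\}$; but the previous step gives $\range A^m = \range A$, forcing $\range A = \{0\}$ and hence $A = 0$, contrary to the hypothesis that $A$ is nonzero. Therefore equality is impossible and the inequality must be strict, $\rank A^2 < \rank A$.

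I do not anticipate a serious obstacle here; the only point requiring care is the inductive stabilization step, where one must be sure that containment together with equal (finite) dimension yields genuine equality of the subspaces before pushing the identity forward by applying $A$. As an alternative one could pass to the Jordan canonical form, since rank and its behavior under squaring are preserved under similarity, and read off the ranks blockwise from the nilpotent Jordan blocks; but the range-stabilization argument is cleaner and avoids the nuisance of separately handling the size-one (zero) blocks.
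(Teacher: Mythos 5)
Your argument is correct and is essentially the paper's own proof: both use $\range A^2 \subset \range A$ plus equality of ranks to get $\range A^2 = \range A$, then induct to obtain $\range A^n = \range A$ for all $n$, contradicting nonzero nilpotency. Your write-up merely spells out the induction and the final contradiction in more detail.
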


\begin{proof} 
Since $\range A^2 \subset  \range A$ one has $\rank A^2 \le \rank A$. 
But if $\rank A^2 = \rank A$ then $\range A^2 = \range A$ implying by a simple induction that 
$\range A^n = \range A$ for all $n \ge 1$, contradicting nonzero nilpotency. 
\end{proof}


We are now ready to prove a necessary condition 
involving kernels and partial isometries for $\Sc(T, T^*)$ to be an SI semigroup.

\begin{theorem}\label{thm 2.1}
Let $T\in M_n(\mathbb{C})$ be a nonselfadjoint matrix. If $\Sc(T, T^*)$ is an SI semigroup, then
$$\text{either } \ker T=\ker T^2 \text { or } T \text{ is a partial isometry.}$$	
\end{theorem}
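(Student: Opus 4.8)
The plan is to use the characterization recalled just above the statement, namely that $\Sc(T,T^*)$ being SI is equivalent to the solvability of $W^*=XWY$ for every word $W$ in $T,T^*$ with $X,Y\in\Sc(T,T^*)\cup\{I\}$. I only need the instance $W=T$, so the SI hypothesis gives me $T^*=XTY$ for some $X,Y\in\Sc(T,T^*)\cup\{I\}$. Since $\ker T\subseteq\ker T^2$ always, the dichotomy $\ker T=\ker T^2$ versus $\ker T\neq\ker T^2$ is the same as $\rank T^2=\rank T$ versus $\rank T^2<\rank T$. So I would assume $\rank T^2<\rank T=:r$ (with $r\ge 1$, as $T\neq 0$ by nonselfadjointness) and try to force $T$ to be a partial isometry.

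The first real step is a rank bookkeeping that pins down the shape of $X$ and $Y$. Applying $\rank(\,\cdot\,)$ to $T^*=XTY$ and using $\rank T^*=r$ together with Proposition \ref{prop2.2} (rank of a product is at most the minimum of the factor ranks), I get $\rank X\ge r$ and $\rank Y\ge r$; but any nonempty word in $T,T^*$ has rank at most $r$, again by Proposition \ref{prop2.2}, so $\rank X=\rank Y=r$ (or the factor equals $I$). Now since $\rank T^{*2}=\rank T^2<r$, any word containing two consecutive $T$'s or two consecutive $T^*$'s has rank $\le\rank T^2<r$ by Proposition \ref{prop2.2}; hence $X$ and $Y$ must be \emph{alternating} words, and moreover the whole product $XTY$ must be alternating (otherwise a $T^2$ would appear at a junction and drop its rank below $r$, contradicting $\rank(XTY)=r$). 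This forces $X$ to end in $T^*$ and $Y$ to begin with $T^*$ when they are nontrivial; and $X=Y=I$ is impossible since it would give $T^*=T$. Thus $W:=XTY=T^*$ is an alternating word of length $\ell=\ell_X+1+\ell_Y\ge 2$.

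The second step is an exact singular-value computation via the SVD $T=U\Sigma V^*$, where $\Sigma=\diag(s_1,\dots,s_r,0,\dots,0)$ with $s_1\ge\cdots\ge s_r>0$. The key observation is that for alternating words the telescoping $U^*U=V^*V=I$ is exact: by induction on the length one shows that an alternating word of length $\ell$ equals $M_1\Sigma^{\ell}M_2^{*}$ with $M_1,M_2\in\{U,V\}$ determined by its first and last letters (e.g. $T^*T=V\Sigma^2V^*$, $TT^*T=U\Sigma^3V^*$). Consequently the nonzero singular values of any alternating word of length $\ell$ are exactly $\{s_i^{\ell}\}_{i=1}^{r}$. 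Since $W=XTY=T^*$ as matrices, its singular values must also be those of $T^*$, namely $\{s_i\}_{i=1}^{r}$. Comparing the two multisets gives $\{s_i^{\ell}\}_{i=1}^r=\{s_i\}_{i=1}^r$ with $\ell\ge 2$.

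Finally I would read off partial isometry. Matching the largest entries gives $s_1^{\ell}=s_1$, so $s_1=1$ and hence $s_i\le 1$ for all $i$; then $s_i^{\ell}\le s_i$ termwise, while $\sum_i s_i^{\ell}=\sum_i s_i$ forces $s_i^{\ell}=s_i$, i.e.\ $s_i=1$, for every $i\le r$. All nonzero singular values being $1$ means $T$ is a partial isometry, completing the dichotomy. I expect the main obstacle to be the rank bookkeeping of the second paragraph — correctly extracting that $X,Y$, \emph{and} the full word $XTY$, are alternating of length $\ge 2$ — since the clean SVD computation and the final real-analysis step are short once the word is known to be alternating. A fallback, if one wants to avoid invoking compound/SVD machinery, is to prove the weaker facts $s_1^{\ell}=s_1$ and $\prod_i s_i^{\ell}=\prod_i s_i$ directly (top singular value and product of singular values are both multiplicative along alternating words), which already yield $s_1=1$ and $\prod s_i=1$; but the multiset identity above is the most economical route to $s_i\equiv 1$.
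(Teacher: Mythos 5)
Your proof is correct, and while it shares the paper's opening move, its decisive step is genuinely different. Like the paper, you start from $T^*=XTY$ and use Proposition \ref{prop2.2} for the rank bookkeeping: the paper's first case shows that if the word $XTY$ contains ${T}^2$ or ${T^*}^2$ then $\rank T=\rank T^2$, i.e.\ $\ker T=\ker T^2$; your contrapositive formulation (assume $\rank T^2<\rank T$, conclude that the full word $XTY$ must be alternating of length $\ell\ge 2$) is the same bookkeeping. Where you diverge is the alternating case. The paper reads the alternating possibilities off the semigroup list in Equation (\ref{singly generated semigroup list}), reduces by nonselfadjointness to $T^*=(TT^*)^kT$ or $T^*=(T^*T)^kT^*$, and disposes of each algebraically --- right-multiplying by $T$, using $\ker T^*T=\ker T$ in the first case, and the spectral theorem applied to $T^*T=(T^*T)^{k+1}$ in the second. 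You instead treat all alternating words at once: the exact telescoping $W=M_1\Sigma^{\ell}M_2^*$ with $M_1,M_2\in\{U,V\}$ unitary shows that the nonzero singular values of an alternating word of length $\ell$ are exactly $\{s_i^{\ell}\}_{i=1}^{r}$, and equating the singular values of $XTY$ with those of $T^*$ forces $s_i\equiv 1$, hence $T$ is a partial isometry by Proposition \ref{prop2.1}. (A small simplification: since $x\mapsto x^{\ell}$ preserves decreasing order, equality of the sorted singular value tuples already gives $s_i^{\ell}=s_i$ termwise, so your sum-comparison step is unnecessary.) Your route is more uniform --- no enumeration of word forms --- and is in the same spirit as the paper's own $s$-number Lemma \ref{L}, except that alternating words give exact equalities where that lemma must invoke the Gohberg--Kre\u{\i}n inequalities; what the paper's case analysis buys instead is finer structural information, namely that a solution of the form $(TT^*)^kT$ forces $\ker T=\ker T^2$ (so it cannot occur when the kernels differ), with only the form $(T^*T)^kT^*$ producing the partial isometry conclusion.
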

\begin{proof}
	Suppose $\Sc(T, T^*)$ is an SI semigroup. Then $(T)_{\Sc(T, T^*)}$ is selfadjoint. Therefore, $T^*=XTY$ for some $X,Y\in\Sc(T, T^*)\cup\{I\}$ where either $X$ or $Y \neq I$ (since $T$ is nonselfadjoint). Recalling by Definition \ref{D5} that all members of $\Sc(T, T^*)$ are words in $T$ and $T^*$, if $XTY$ contains any powers higher than one of $T$ or $T^*$, then by Proposition~\ref{prop2.2}, 
\ one obtains $\rank(XTY)\leq \rank T^2$ or $\rank{T^*}^2$. So $T^*=XTY$ together with the fact that $\rank T^*=\rank T$ implies that $\rank T= \rank(XTY)\leq \rank(T^2)$. Also, $\rank(T^2)\leq \rank T$ (also Proposition~\ref{prop2.2}). Hence  $\rank(T^2)= \rank T$. By the rank-nullity theorem, $\dim(\ker T)=\dim(\ker T^2)$. Since $\ker T \subset \ker T^2$ and $\dim(\ker T)=\dim(\ker T^2)$, one has $\ker T= \ker T^2$.

If $XTY$ does not contain any higher powers of $T$ or $T^*$ and recalling not both $X,Y$ are the identity operator, then 
by Equation (\ref{singly generated semigroup list}) and avoiding all cases where $T$ or $T^*$ have powers higher than one,
	$$T^* = XTY\in \{T, T^*, (TT^*)^k,(TT^*)^kT,(T^*T)^k,(T^*T)^kT^*\}$$ 
for some $k\geq 1$. 
Note that since $T$ is not selfadjoint and $XTY$ contains no higher powers than one, but must contain a $T$, it follows that $XTY$ cannot be of the first, second, third or the fifth form in the above display. 
So either $T^*=(TT^*)^kT$ or $XTY=(T^*T)^kT^*$ for some $k \geq 1$. 
In the fourth case $XTY=(TT^*)^kT$, one has $T^* = XTY=(TT^*)^kT$ and
 by right multiplying $T$ on both sides, one obtains, $T^*T=(TT^*)^kT^2$ which implies that $\ker T^2 \subset \ker T^*T$. But as is well-known and obvious to prove, $\ker T^*T=\ker T$. 
Therefore $\ker T^2\subset \ker T$ which, along with the obvious reverse inclusion, implies that $\ker T=\ker T^2$, proving the theorem in this case. 
On the other hand in the sixth case, if $XTY=(T^*T)^kT^*$, then $T^*=(T^*T)^kT^*$,  and so by right multiplying $T$ on both sides, one obtains, $T^*T=(T^*T)^{k+1}$. Therefore by the spectral theorem, $T^*T$ is a projection, or equivalently, $T$ is a partial isometry.
\end{proof}

Recalling again that all square matrices are unitarily equivalent to some upper triangular matrix, the next result about nilpotent matrices interests us because these are the ones whose unitarily equivalent upper triangular forms, by a direct computation, are those that are strictly upper triangular.

\begin{corollary}\label{cnilpotent}
	Let $T\in M_n(\mathbb{C})$ be a nonzero nilpotent matrix. If $\Sc(T,T^*)$ is an SI semigroup, then $T$ is a partial isometry.
\end{corollary}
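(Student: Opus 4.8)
The plan is to combine Theorem \ref{thm 2.1} with Proposition \ref{prop2.3} to force the dichotomy of Theorem \ref{thm 2.1} to collapse to its second alternative. Since $T$ is nonzero nilpotent, it is in particular nonselfadjoint (a nonzero nilpotent cannot be selfadjoint, since a selfadjoint nilpotent matrix is zero by the spectral theorem), so Theorem \ref{thm 2.1} applies and gives that either $\ker T = \ker T^2$ or $T$ is a partial isometry. The whole task is therefore to rule out the first alternative, after which the corollary follows immediately.

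To rule out $\ker T = \ker T^2$, I would pass to dimensions via the rank--nullity theorem: the equality $\ker T = \ker T^2$ is equivalent to $\dim \ker T = \dim \ker T^2$ (using the already-noted inclusion $\ker T \subset \ker T^2$), which by rank--nullity is equivalent to $\rank T = \rank T^2$. But Proposition \ref{prop2.3} states precisely that for a nonzero nilpotent matrix $\rank T^2 < \rank T$, a strict inequality. Hence $\ker T = \ker T^2$ is impossible for our $T$, and the dichotomy of Theorem \ref{thm 2.1} leaves only the conclusion that $T$ is a partial isometry.

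I do not expect any serious obstacle here: the corollary is a clean specialization of Theorem \ref{thm 2.1} in which the hypothesis of nonzero nilpotency is exactly what defeats the $\ker T = \ker T^2$ branch. The only point requiring a word of care is the preliminary check that a nonzero nilpotent matrix is nonselfadjoint, so that Theorem \ref{thm 2.1} is genuinely applicable; this is routine. Thus the proof is essentially two lines: invoke Theorem \ref{thm 2.1} to get the dichotomy, then use Proposition \ref{prop2.3} (through rank--nullity) to eliminate the first horn, leaving $T$ a partial isometry.
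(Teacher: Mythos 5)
Your proposal is correct and matches the paper's own proof essentially verbatim: both verify that a nonzero nilpotent matrix is nonselfadjoint, invoke Proposition~\ref{prop2.3} with the rank--nullity theorem to get $\ker T \neq \ker T^2$, and then apply Theorem~\ref{thm 2.1} to conclude $T$ is a partial isometry. No differences worth noting.
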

\begin{proof}
	Since $T$ is a nonzero nilpotent matrix, $T$ is not selfadjoint. From Proposition~\ref{prop2.3}, $\rank T^2< \rank T$. Therefore, by the rank-nullity theorem, $\dim(\ker T)< \dim(\ker T^2)$ and so $\ker T\neq \ker T^2$.
	Since $\Sc(T,T^*)$ is an SI semigroup, by Theorem \ref{thm 2.1}, $T$ is a partial isometry.
\end{proof}

The converse of Corollary \ref{cnilpotent} does not hold in general, see Example \ref{E1} below where we provide a class of such nilpotent matrices for which the converse does not hold. But if the nilpotency degree is $2$, then Corollary~\ref{cor2.1} below proves the converse and so yields a characterization of those semigroups $\Sc(T,T^*)$ generated by a nilpotent matrix of degree $2$ that are SI.

\begin{example}\label{E1}
	Let $T\in M_n(\mathbb{C})$ be a nilpotent partial isometry with nilpotency degree equal to $3$  (hence nonselfadjoint).  If $||T^2||<1$ 
(for instance 
$\begin{pmatrix}
0 & 1/\sqrt 2 & 0 & 1/\sqrt 2\\
0 & 0 & 1 & 0 \\
0&0&0&0\\
0&0&0&0
\end{pmatrix}$
is such a nilpotent of degree 3 partial isometry),
then $\Sc(T,T^*)$ is not SI. Indeed, without loss of generality we may assume that $T$ is a strictly upper triangular matrix 
	and it suffices to show that $(T^2)_{\Sc(T,T^*)}$ is not selfadjoint. Suppose otherwise that $(T^2)_{\Sc(T,T^*)}$ is selfadjoint. Then ${T^*}^2=XT^2Y$ for some $X,Y\in \Sc(T,T^*)\cup \{I\}$ but not both can be the identity (as ${T^*}^2\neq T^2$ because $T^2$ is also strictly upper triangular). Moreover, Proposition~\ref{pnil} applied to ${T^*}^2=XT^2Y$ and its adjoint equation $T^2 = Y^*{T^*}^2X^*$ implies via its contrapositive that 
$X\neq I$ and  $Y\neq I$ so both $X,Y \in \Sc(T,T^*)$. And since $T^3 = 0$, so $X,Y \ne T$. Also note that neither $X$ nor $Y$ has any higher than one power of $T$ or $T^*$, since otherwise from the assumptions $||T||=1$ (as $T$ is a partial isometry), and $||T^2||<1$ and the assumption ${T^*}^2=XT^2Y$, one obtains either $||X||<1$ or $||Y||<1$.
	This then implies that $||T^2||=||{T^*}^2|| = ||XT^2Y||<||T^2||$, a contradiction. 

Then with the additional hypothesis that $T$ is a partial isometry 
(with equivalences: $T = TT^*T$, $T^* = T^*TT^*$, $T^*T$ is a projection, or $TT^*$ is a projection  \cite[Corollary 3 of Problem 127]{Hal82}), so $\Sc(T,T^*)$ consisting of all words in $T$ and $T^*$, those words without higher powers than one fall into the four categories of alternating  $T$ and $T^*$: starting and ending with each of $T$ or $T^*$.
Those starting and ending with $T$, clearly via the identity $T = TT^*T$, reduce to $T$, except the case $X$ or $Y = T$ which was ruled out above;
those starting and ending with $T^*$, clearly via the identity $T^* = T^*TT^*$, reduce to $T^*$; 
those starting with $T$ and ending with $T^*$, clearly via  $TT^*$ being a projection, reduce to $TT^*$;
and those starting with $T^*$ and ending with $T$, clearly via  $T^*T$ being a projection, reduce to $T^*T$. That is, 
$$X, Y \in \{T^*, TT^*, T^*T\}.$$ 
And moreover since $T^3 = 0$, it becomes clear via the contrapositive, because $T^2, T^{*2} \ne 0$ with ${T^*}^2=XT^2Y$, that 
 $$X\in \{T^*, TT^*\} \text { and } Y\in \{ T^*, T^*T\}.$$ 

By considering these four cases 
$(X,Y) = (T^* \text{ or } TT^*, T^* \text{ or } T^*T)$ 
we can now prove the unsolvability of ${T^*}^2=XT^2Y$. 	 
Indeed, when $X=T^*$, one has ${T^*}^2=XT^2Y=T^*T^2Y$, which by left multiplying $T^*$ one obtains $0={T^*}^3={T^*}^2T^2Y$. So in the case $X=T^*$, $Y = T^*$, one has $0 = {T^*}^2T^2T^*$. 
Then right multiplying by $T$ and substituting $T = TT^*T$ (an equivalent characterization of partial isometry) we obtain $0=({T^*}^2T^2T^*)T= ({T^*}^2T)(TT^*T) = {T^*}^2T^2 = ||T^2||^2 = 0$ implying 
$T^2=0$, contradicting the $T$ nilpotency of degree $3$.  
On the other hand in the case $Y=T^*T$, substituting $T = TT^*T$ one has 
${T^*}^2=XT^2Y=T^*T^2T^*T=T^*T^2$ and taking adjoints yields $T^2 ={T^*}^2T$ implying that 
$\range T^2\subset \range {T^*}^2$, a contradiction to Proposition~\ref{pnil} applied to the nonzero nilpotent operator $T^2$. 
Therefore, when $X = T^*$ and $Y \in \{ T^*, T^*T\}$, then ${T^*}^2 \neq XT^2Y$.  

For the cases when $X = TT^*$ and $Y \in \{ T^*, T^*T\}$, then substituting again $T = TT^*T$, 
either ${T^*}^2 = XT^2Y =  TT^*T^2T^* = T^2T^*$ or ${T^*}^2 = (TT^*T)(TT^*T) = T^2$. 
The former equation ${T^*}^2  = T^2T^*$, after right multiplying by $T^*$, as before implies that $T^2 = 0$, against $T$ nilpotent degree $3$ (or one can apply Proposition~\ref{pnil} applied to the nonzero nilpotent operator $T^2$ for a contradiction); 
and the latter equation implies that $T^2$ is selfadjoint, against $T^2$ a nonzero nilpotent matrix. 
Therefore, neither of the required equations hold.
This completes the proof of the nonsolvability of the equation ${T^*}^2=XT^2Y$ 
in $\Sc(T, T^*) \cup \{I\}$, thereby proving that $\Sc(T, T^*)$ is not SI.
	 \end{example}
	
\begin{remark}\label{R1}
The condition $||T^2||<1$ in the above example cannot be dropped. For instance, consider $T:=\begin{bmatrix} 
	0 & 1 & 0\\
	0 & 0 & 1\\
	0 & 0 & 0
	\end{bmatrix}$. Then $\Sc(T,T^*)$ is SI because $T$ is a power partial isometry \cite[Corollary1.15]{PW21} but its square has norm one.
\end{remark}

\begin{remark}\label{R2}
In Example \ref{E1}, it took a $4\times4$ matrix example to produce a degree 3  nilpotent partial isometry whose square has norm strictly less than one.
In fact, after reducing the problem to a strictly upper triangular matrix, with some work it can be shown that there is no such $3\times 3$ example. 
\end{remark}

For the nilpotent degree 2 case, we have a partial isometry characterization for  $\Sc(T,T^*)$ being SI.

	\begin{corollary} \label{cor2.1}
		Let $T\in M_n(\mathbb C)$ be a nilpotent matrix of degree $2$. 
Then $\Sc(T,T^*)$ is SI if and only if $T$ is a partial isometry.
	\end{corollary}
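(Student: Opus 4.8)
The plan is to prove the two implications separately, with the forward direction immediate and the reverse direction requiring a structural analysis of the semigroup. For the forward direction, suppose $\Sc(T,T^*)$ is SI. Since $T$ is nilpotent of degree $2$ it is in particular a nonzero nilpotent matrix, so Corollary \ref{cnilpotent} applies directly and yields that $T$ is a partial isometry; this requires no further work.

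For the reverse direction, I would first determine the full structure of $\Sc(T,T^*)$ under the hypotheses $T^2=0$ and $T$ a partial isometry. Taking adjoints of $T^2=0$ gives ${T^*}^2=0$, so any word containing two consecutive factors $T$ or two consecutive factors $T^*$ vanishes; hence every nonzero word in $\Sc(T,T^*)$ is an alternating product of $T$ and $T^*$. The partial isometry identities $T=TT^*T$ and $T^*=T^*TT^*$ (equivalently, $P:=T^*T$ and $Q:=TT^*$ are projections) then collapse every such alternating word: those beginning and ending in $T$ reduce to $T$, those beginning and ending in $T^*$ reduce to $T^*$, and the two mixed types reduce to $Q=TT^*$ and $P=T^*T$ respectively. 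Thus $\Sc(T,T^*)=\{0,\,T,\,T^*,\,TT^*,\,T^*T\}$, a finite semigroup (note $0$ arises as $T\cdot T$).

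With this reduction in hand, I would invoke the criterion \cite[Lemma 1.9]{PW21} that $\Sc(T,T^*)$ is SI precisely when $W^*=XWY$ is solvable with $X,Y\in\Sc(T,T^*)\cup\{I\}$ for every word $W$. Because every word equals one of the five elements above, it suffices to verify five cases. The element $0$ and the two projections $TT^*$ and $T^*T$ are selfadjoint, so their adjoints lie trivially in their principal ideals. The remaining two cases are handled by the partial isometry identities themselves: $T^*=T^*\,T\,T^*$ exhibits $T^*$ in the principal ideal of $T$ (take $X=Y=T^*$), while $T=T\,T^*\,T$ exhibits $T=(T^*)^*$ in the principal ideal of $T^*$ (take $X=Y=T$). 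This establishes the SI property.

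I expect the only genuinely delicate point to be the semigroup reduction — confirming that degree-$2$ nilpotency forces strict alternation and that the partial isometry relations collapse all alternating words to the four nonzero elements $\{T,T^*,TT^*,T^*T\}$. Once this is established, the verification via \cite[Lemma 1.9]{PW21} is immediate and the partial isometry identities supply the solutions outright; there are no residual cases requiring the norm estimates that complicated Example \ref{E1}.
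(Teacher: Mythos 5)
Your proof is correct, and while the forward direction coincides with the paper's (both invoke Corollary \ref{cnilpotent}), your reverse direction takes a genuinely different route. The paper dispatches it in two lines: since $T^2=0$, every power $T^k$ with $k\ge 2$ is the zero matrix and hence trivially a partial isometry, so $T$ is a \emph{power} partial isometry, and the SI property follows from the cited result \cite[Corollary 1.15]{PW21} (which rests on the Popov--Radjavi machinery for semigroups of partial isometries). You instead bypass that machinery entirely: you compute the semigroup explicitly, showing that degree-$2$ nilpotency forces every nonzero word to alternate between $T$ and $T^*$, and that the partial isometry identities $T=TT^*T$, $T^*=T^*TT^*$ collapse all such words so that $\Sc(T,T^*)=\{0,\,T,\,T^*,\,TT^*,\,T^*T\}$; then the SI criterion of \cite[Lemma 1.9]{PW21} is verified case by case, with the partial isometry identities themselves supplying the solutions $T^*=T^*\,T\,T^*$ and $T=T\,T^*\,T$ to the bilinear equations (the criterion depends only on the matrix value of a word, so checking the five values suffices). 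What the paper's route buys is brevity and uniformity with the rest of Section 2, where the power-partial-isometry sufficient condition is used repeatedly (e.g., in Corollary \ref{corollary2.6} and Theorem \ref{jordan}). What your route buys is a self-contained, elementary argument that exposes more structure: the semigroup is finite with at most five elements, every ideal can be listed outright, and the solvability of $W^*=XWY$ is exhibited by explicit witnesses rather than inherited from a general theorem. Both are sound; yours is longer but more informative at this level of generality.
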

\begin{proof}
	$\Rightarrow$: This is Corollary \ref{cnilpotent}.

	$\Leftarrow$:
	Suppose $T$ is a partial isometry. Since $T^2=0$, $T$ is a power partial isometry. Therefore $\Sc(T,T^*)$ is SI by \cite[Corollary 1.15]{PW21}.
\end{proof}

We next turn to weighted shifts. That is, $T\in M_n(\mathbb{C})$ for which $Te_j = \alpha_je_{j+1}$ for $1 \leq j \leq n-1$ and $Te_n = 0$.

\begin{corollary}\label{corollary2.6}
	Let $T\in M_n(\mathbb{C})$ be a weighted shift matrix with weights $\{\alpha_j\}^{n-1}_{j=1}$. 
Then $\Sc(T,T^*)$ is SI if and only if $T$ is a partial isometry.
\end{corollary}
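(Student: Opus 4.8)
The plan is to mirror the nilpotent-degree-$2$ argument of Corollary \ref{cor2.1}, exploiting the fact that a weighted shift matrix is automatically nilpotent. First I would record that $T$ satisfies $T^n = 0$: since $Te_j = \alpha_j e_{j+1}$ raises the basis index by one and $Te_n = 0$, iterating $n$ times sends every basis vector to $0$. Thus $T$ is nilpotent, and either $T = 0$ (in which case $T$ is trivially a partial isometry and $\Sc(T,T^*) = \{0\}$ is vacuously SI) or $T$ is a nonzero nilpotent matrix.

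For the forward implication, suppose $\Sc(T,T^*)$ is SI. If $T \ne 0$, then $T$ is a nonzero nilpotent matrix, so Corollary \ref{cnilpotent} applies directly and yields that $T$ is a partial isometry. This step is immediate and requires no new computation; the weighted-shift hypothesis enters only through nilpotency.

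For the converse, suppose $T$ is a partial isometry. The key step is to upgrade this to $T$ being a \emph{power} partial isometry, after which \cite[Corollary 1.15]{PW21} gives that $\Sc(T,T^*)$ is SI. I would compute $T^*T$ directly: since $T^*e_{j+1} = \bar\alpha_j e_j$ and $T^*e_1 = 0$, one obtains $T^*T = \diag(|\alpha_1|^2, \ldots, |\alpha_{n-1}|^2, 0)$, which is a projection precisely when each weight satisfies $|\alpha_j| \in \{0,1\}$. Granting this, I would note that each power $T^k$ is again a weighted shift, namely $T^k e_j = \bigl(\prod_{i=0}^{k-1}\alpha_{j+i}\bigr) e_{j+k}$ (with terms whose index exceeds $n$ read as $0$), so $(T^k)^*T^k$ is the diagonal matrix whose $j$-th entry is $\prod_{i=0}^{k-1}|\alpha_{j+i}|^2$. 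Since each factor lies in $\{0,1\}$, so does each such product, whence $(T^k)^*T^k$ is a projection and $T^k$ is a partial isometry for every $k$. Thus $T$ is a power partial isometry.

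The only genuine content is this last computation verifying the power-partial-isometry property; the forward direction is essentially free from Corollary \ref{cnilpotent}, so I do not anticipate a real obstacle. The one place to be careful is the handling of the degenerate cases: weights equal to $0$ (which keep $(T^k)^*T^k$ a projection), and the case $T = 0$, where one must observe that the zero matrix counts as a partial isometry so that the equivalence holds without assuming $T \ne 0$.
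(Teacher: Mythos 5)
Your proof is correct and follows essentially the same route as the paper: the forward direction via Corollary \ref{cnilpotent} (using nilpotency of the shift), and the converse by checking directly that all powers $T^k$ are weighted shifts whose $(T^k)^*T^k$ are $\{0,1\}$-diagonal projections, so $T$ is a power partial isometry and \cite[Corollary 1.15]{PW21} applies. The only cosmetic differences are that you work with the moduli $|\alpha_j|$ directly instead of the paper's reduction to nonnegative weights via unitary equivalence, and you explicitly dispose of the degenerate case $T=0$, which the paper leaves implicit.
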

\begin{proof}
	Clearly SI and partial isometry are unitary invariant properties. Since $T$ is unitarily equivalent to the matrix with weights $\{|\alpha_j|\}^{n-1}_{j=1}$ (\cite[Problem 89]{Hal82}), without loss of generality we may assume $\alpha_j \geq 0$.
	Suppose $\Sc(T,T^*)$ is SI. Then by Corollary \ref{cnilpotent}, it follows that $T$ is a partial isometry. Conversely, if $T$ is a partial isometry, this equivalent to $T^*T = \diag(\alpha^2_1, \cdots, \alpha^2_{n-1}, 0)$ being a projection. This further implies that the nonzero $\alpha_{j}$'s must be equal to $1$. 
Then by a straightforward computation one sees that ${T^*}^k$ and $T^k$ are respectively upper and lower $k$ diagonals with weights products of the $\alpha_j$'s and that the diagonal matrix ${T^*}^kT^k$ is also a projection for each $k\geq2$.
Therefore, $T$ is a power partial isometry and hence by \cite[Corollary 1.15]{PW21}, $\Sc(T,T^*)$ is SI.
\end{proof}

\begin{theorem}\label{SI PI PPI}
For weighted shifts, the following are equivalent.
\begin{enumerate}[label=(\roman*)]
\item $\Sc(T,T^*)$ is SI.
\item $T$ is a partial isometry.
\item $T$ is a power partial isometry.
\end{enumerate}
\end{theorem}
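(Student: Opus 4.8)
The plan is to close a short cycle of implications among the three statements, observing first that the hard analytic part is already done: the equivalence of (i) and (ii) is precisely the content of Corollary \ref{corollary2.6}. So the only genuinely new work is to relate the partial isometry condition (ii) to the power partial isometry condition (iii), and even that is largely extractable from the proof of Corollary \ref{corollary2.6}.

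The implication (iii) $\Rightarrow$ (ii) is immediate from the definitions: a power partial isometry is by definition a partial isometry whose powers are also partial isometries, so in particular it is a partial isometry. No argument beyond invoking the definition is needed.

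For (ii) $\Rightarrow$ (iii), I would reuse the normalization already performed in Corollary \ref{corollary2.6}. Since being a partial isometry and being a power partial isometry are both unitary invariants, and since $T$ is unitarily equivalent to the weighted shift with weights $\{|\alpha_j|\}$, I may assume $\alpha_j \ge 0$. The hypothesis that $T$ is a partial isometry says $T^*T = \diag(\alpha_1^2, \ldots, \alpha_{n-1}^2, 0)$ is a projection, which forces each nonzero $\alpha_j$ to equal $1$. With the weights thus confined to $\{0,1\}$, a direct computation shows that for every $k \ge 1$ the matrix $T^k$ (respectively ${T^*}^k$) is again supported on the $k$-th lower (resp. upper) diagonal, with entries that are products of consecutive $\alpha_j$'s and hence again lie in $\{0,1\}$; consequently ${T^*}^k T^k$ is a diagonal $0$-$1$ matrix, i.e.\ a projection, so $T^k$ is a partial isometry for each $k$. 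This is exactly the assertion that $T$ is a power partial isometry.

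Since these are the very computations already appearing in condensed form in the proof of Corollary \ref{corollary2.6}, I anticipate no real obstacle. The only point deserving a touch of care is checking that the entries of ${T^*}^k T^k$ really are idempotent, which reduces to the triviality that a product of a string of $0$-$1$ weights is again $0$ or $1$ once the weights have been normalized. The theorem then follows by combining the equivalence (i) $\Leftrightarrow$ (ii) from Corollary \ref{corollary2.6} with the equivalence (ii) $\Leftrightarrow$ (iii) just described.
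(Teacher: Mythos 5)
Your proposal is correct and matches the paper's intended argument: the paper states Theorem \ref{SI PI PPI} without a separate proof precisely because it follows from Corollary \ref{corollary2.6} together with the computation inside its proof showing that a partially isometric weighted shift (nonzero weights forced to equal $1$) has ${T^*}^kT^k$ a projection for all $k$, i.e.\ is a power partial isometry. Your cycle of implications, including the definitional step (iii) $\Rightarrow$ (ii), is exactly this route.
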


We conclude this section with a theorem on the simplicity of semigroups when generated by an invertible matrix whose application is seen in Corollary \ref{nonsimple} and Theorem \ref{theorem1}. In general, a group is always simple (since having inverses, every nonzero multiplicative ideal contains the identity, hence is the whole group), but a semigroup need not be simple  (\cite{PW21} abounds with examples, and as well herein any of the non SI semigroups cannot be simple). In Theorem \ref{inverse}, we prove that under the SI property of $\Sc(T, T^*)$, the invertibility of a nonselfadjoint $T$ implies that $\Sc(T, T^*)$ is a group and hence simple. That is, for the class of semigroups $\Sc(T, T^*)$ with $T$ nonselfadjoint and invertible, SI and simplicity are equivalent.

\begin{theorem}\label{inverse}
		Let $ T\in M_n(\mathbb C)$ be a nonselfadjoint invertible matrix. Then $ \Sc(T,T^*)$ is an SI semigroup if and only if $\Sc(T,T^*)$ is simple.
	\end{theorem}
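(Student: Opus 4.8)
The reverse implication is immediate and needs neither invertibility nor nonselfadjointness: by construction $\Sc(T,T^*)$ is a selfadjoint semigroup (Definition \ref{D6}), so if it is simple then its only ideal is $\Sc(T,T^*)$ itself, which is selfadjoint; hence simplicity trivially forces the SI property. All the content is in the forward direction, and the plan is to prove the stronger statement that $\Sc(T,T^*)$ is in fact a group. Simplicity then follows, since in a group every nonempty ideal $J$ already contains $I=w^{-1}w$ for any $w\in J$ (as $w^{-1}w\in\Sc(T,T^*)J\subseteq J$) and therefore equals the whole semigroup.

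Since $T$ is invertible, every word in $T$ and $T^*$ is an invertible matrix, so $\Sc(T,T^*)$ is a cancellative subsemigroup of $GL_n(\mathbb C)$. I would first reduce the group claim to the single assertion that \emph{some nonempty word in $T,T^*$ equals} $I$. Indeed, suppose $I=A_1A_2\cdots A_m$ with each $A_i\in\{T,T^*\}$; necessarily $m\ge 2$, since a length-one relation would read $T=I$ or $T^*=I$, against nonselfadjointness. Then $A_1^{-1}=A_2\cdots A_m\in\Sc(T,T^*)$, so $T^{-1}\in\Sc(T,T^*)$ when $A_1=T$, while if $A_1=T^*$ we obtain $(T^*)^{-1}\in\Sc(T,T^*)$ and hence $T^{-1}=((T^*)^{-1})^*\in\Sc(T,T^*)$ because $\Sc(T,T^*)$ is selfadjoint. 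Thus $T,T^*,T^{-1},(T^*)^{-1}$ all lie in $\Sc(T,T^*)$, which forces $\Sc(T,T^*)$ to coincide with the group generated by $T$ and $T^*$.

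It remains to manufacture a nonempty word equal to $I$, and here I would invoke the SI hypothesis exactly as at the start of the proof of Theorem \ref{thm 2.1}: selfadjointness of the principal ideal $(T)_{\Sc(T,T^*)}$ yields $T^*=XTY$ for some $X,Y\in\Sc(T,T^*)\cup\{I\}$ not both equal to $I$. Regard $XTY$ as a word in the letters $T,T^*$. If its leftmost letter is $T^*$, then $T^*=T^*W_0$ with $W_0$ the nonempty remainder (it still contains the central $T$), and cancelling $T^*$ on the left, legitimate since $T^*$ is invertible, gives $I=W_0\in\Sc(T,T^*)$; symmetrically, a rightmost letter $T^*$ cancels on the right to give $I\in\Sc(T,T^*)$.

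The only remaining possibility, which I expect to be the one real obstacle, is that $XTY$ both begins and ends with $T$, say $T^*=TW'T$ with $W'\in\Sc(T,T^*)\cup\{I\}$, so that no $T^*$ sits at either end to cancel. To resolve it I would pass to the adjoint equation $T=T^*(W')^*T^*$, valid since $\Sc(T,T^*)$ is selfadjoint and hence $(W')^*\in\Sc(T,T^*)\cup\{I\}$, and substitute this expression for the leading $T$ in $T^*=TW'T$. This produces $T^*=T^*(W')^*T^*W'T$, and cancelling the leading $T^*$ yields $I=(W')^*T^*W'T$, once more a nonempty word in $\Sc(T,T^*)$. In every case a nonempty word equals $I$, which completes the reduction and the proof. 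I would add the remark that taking moduli in $T^*=XTY$ gives $|\det T|=1$, and that elementary examples such as a nonselfadjoint $T$ with $T^2=I$ (which lands in precisely this last ``both ends $T$'' case) show the conclusion is genuinely that $\Sc(T,T^*)$ is a group, and not that $T$ is unitary.
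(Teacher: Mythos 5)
Your proof is correct, and its skeleton matches the paper's: the reverse direction is dismissed as trivial, and in the forward direction you extract $T^*=XTY$ from selfadjointness of the principal ideal $(T)_{\Sc(T,T^*)}$, split into cases according to the letters at the two ends of the word $XTY$, and in each case conclude that $\Sc(T,T^*)$ is a group, hence simple; your three cases are exactly the paper's four forms (i) $T^*WT$, (ii) $TWT^*$, (iii) $T^*WT^*$, (iv) $TWT$, with leftmost-letter-$T^*$ absorbing (i) and (iii). Where you genuinely depart is in the execution, in two ways. First, you isolate a reduction lemma -- if some nonempty word in $T,T^*$ equals $I$, then $T^{-1}$ and $(T^*)^{-1}$ lie in $\Sc(T,T^*)$, which therefore coincides with the group generated by $T$ and $T^*$ -- and then aim every case at manufacturing such a word. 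Second, in the hard case $T^*=TW'T$, where no end letter cancels, you substitute the adjoint equation $T=T^*(W')^*T^*$ for the leading $T$ and cancel, getting $I=(W')^*T^*W'T$ in one stroke. The paper instead performs a longer inverse bookkeeping in this case: from $T^*=TWT$ it puts $T^*T^{-1}$ and $T^{-1}T^*$ in the semigroup, uses selfadjointness to get $T(T^*)^{-1}=(T^{-1}T^*)^*\in\Sc(T,T^*)$, multiplies to obtain $I$, and then assembles $T^{-1}$ explicitly as a product of these elements. Both arguments invoke selfadjointness of the semigroup at the same critical spot (passing to adjoints in case (iv)), so the ingredients agree, but your packaging is tighter and more uniform, at the cost of not exhibiting the inverses explicitly. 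Your closing remark is also consonant with the paper: the example it gives after Corollary \ref{cor3} (the matrix with entries $1/2$ and $2$) is precisely a nonselfadjoint involution, confirming that the correct conclusion is ``group,'' not ``unitary generator.''
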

\begin{proof}
	If $\Sc(T,T^*)$ is simple, then it has no ideals and so is vacuously SI.
 
But to prove that SI semigroups $\Sc(T, T^*)$ are simple requires work. First, we recall the semigroup list Equation (\ref{singly generated semigroup list}):
	
\noindent	$\Sc(T,T^*) =  \{T^n, {T^*}^n, \Pi_{j=1}^{k}T^{n_j}{T^*}^{m_j},  (\Pi_{j=1}^{k}T^{n_j}{T^*}^{m_j})T^{n_{k+1}}, \Pi_{j=1}^{k}{T^*}^{m_j}T^{n_j}, (\Pi_{j=1}^{k}{T^*}^{m_j}T^{n_j}){T^*}^{m_{k+1}}\},$ 

\noindent where $n \ge 1,\,  k\ge1,\, n_j, m_j \ge 1\, \text{for}\, 1 \le j \le k~\text{and}~m_{k+1}\geq 1, n_{k+1}\geq 1$.
	
Now suppose $\Sc(T,T^*)$ is SI. Then $(T)_{\Sc(T,T^*)}$ is a selfadjoint-ideal, in particular. So, $T^*=XTY$ for some $X,Y\in\Sc(T,T^*)\cup\{I\}$, where both $X,Y$ are not identity becasue $T$ is nonselfadjoint. 
Observe that the only word in the above semigroup list without a $T$ in it is the second term ${T^*}^n$; 
and because $T$ is nonselfadjoint, $XTY$ being $T^*$ cannot take the form $T^n$ for $n = 1$. 
Therefore $XTY$ may only take at least one of the remaining forms, all of which by observation take one of the following four possible forms. 
	$$\text{(i)}\, XTY = T^*WT, \quad 
		\text{(ii)}\, XTY = TWT^*, \quad
		\text{(iii)}\, XTY = T^*WT^*,\quad \text{ and}\quad
		\text{(iv)}\, XTY = TWT,$$		
where $W\in \Sc(T,T^*)\cup \{I\}$.

Case (i). If $XTY = T^*WT$, then $T^* = T^*WT$. Since $T$ is not selfadjoint, $W \neq I$.  Left multiplying equation $T^* = T^*WT$ by the inverse ${T^*}^{-1}$, one obtains 
$I=WT \in \Sc(T,T^*)$ and so $T^{-1} = W\in \Sc(T,T^*)$. 
Hence in this case, since the inverse of the generator is also in the semigroup, every word in $T$ and $T^*$ has an inverse in the semigroup, that is, every member of $\Sc(T,T^*)$ has its inverse in the semigroup and so the semigroup contains the identity $I$. And so the semigroup is a group, hence it is simple. 
Case (ii) can be handled similarly as in Case (i). 

Case (iii). If $XTY=T^*WT^*$, then $T^*=T^*WT^*$. Here also, $W \neq I$. Otherwise, $T^*={T^*}^2$ and the invertibility of $T^*$ implies that $T^*=I$ which is selfadjoint contradicting the nonselfadjointness of $T$. 
Then since $W \neq I$, again left multiplying by the inverse ${T^*}^{-1}$ in the equation $T^*=T^*WT^*$, one obtains, $I=WT^*$. So, ${T^*}^{-1} = W \in \Sc(T,T^*)$. Therefore, $\Sc(T,T^*)$ forms a group and hence is simple.

Case (iv). If $XTY=TWT$, then $T^*=TWT$. We first claim that if $T^*=TWT$, then $I \in \Sc(T,T^*)$.
 Since  $T^*=TWT$, one has $T^*T^{-1}=WT$ and  $T^{-1}T^* = TW$.  Therefore, $T^*T^{-1}$ and $T^{-1}T^*\in \Sc(T,T^*)$. 
Then $T{T^*}^{-1}=(T^{-1}T^*)^* \in \Sc(T,T^*)$, since $\Sc(T,T^*)$ is a selfadjoint semigroup. Hence, $I=(T^*T^{-1})(T{T^*}^{-1})\in \Sc(T,T^*)$.
And since $T^*=TWT$, so $T^{-1}T^*T^{-1} = W \in \Sc(T,T^*)$. 
But then also one has  $T=T^*W^*T^*$ implying $T{T^*}^{-1}=T^*W^* \in \Sc(T,T^*)$, 
where the latter inclusion holds because $W \in \Sc(T,T^*) \cup \{I\}$ so $W^* \in \Sc(T,T^*) \cup \{I\}$ again by the selfadjointness of $\Sc(T,T^*)$, hence $T{T^*}^{-1}=T^*W^* \in \Sc(T,T^*)$. 
Therefore 
 $(T^*T^{-1})^{-1} = TT^{*-1}\in \Sc(T,T^*)$. 
And finally because $T^{-1}T^*T^{-1} \in \Sc(T,T^*)$ one has 
 $T^{-1} = (T^{-1}T^*T^{-1})((T^*T^{-1})^{-1}) \in \Sc(T,T^*)$ so all words in $T$ and $T^*$ have inverses in $\Sc(T,T^*)$ and $I \in \Sc(T,T^*)$. 
So again $\Sc(T,T^*)$ is a group and hence is simple.
\end{proof}

An immediate application of this Theorem \ref{inverse} combined with our first paper on the subject \cite[Summary preceding Remark 3.21]{PW21}
classifies all SI semigroups  $\SC(T, T^*)$ generated by $T \in M_2(\mathbb C)$. 
We caution that this result may not hold for rank two operators in $F(\mathcal H)$.

\begin{corollary} Every matrix in $M_2(\mathbb C)$ has rank one or two.  For a complete classification of when $\SC(T, T^*)$ is SI:

In the rank one case, see \cite[Summary preceding Remark 3.21]{PW21} for a complete classification.

In the rank two case, the nonselfadjoint matrix is invertible and hence its generated semigroup is SI if and only if it is simple.
\end{corollary}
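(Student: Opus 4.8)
The plan is to assemble the classification directly from the rank dichotomy together with the two ingredients already in hand: the rank-one classification of \cite[Summary preceding Remark 3.21]{PW21} and Theorem \ref{inverse}. First I would record the elementary linear-algebra fact that any nonzero $T \in M_2(\mathbb{C})$ has rank exactly one or exactly two, since the rank is a positive integer at most $2$. (The zero matrix generates $\Sc(0,0)=\{0\}$, which is vacuously SI and can be set aside.) This is the dichotomy announced in the first sentence of the statement.

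For the rank-one case there is nothing new to prove: $T$ is then a rank-one matrix, and the complete necessary-and-sufficient classification of when $\Sc(T,T^*)$ is SI was already obtained in \cite[Summary preceding Remark 3.21]{PW21}, so I would simply invoke that result.

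For the rank-two case the key observation is that a $2\times 2$ matrix of full rank is invertible. If $T$ is selfadjoint, then $\Sc(T,T^*)=\{T^n : n\ge 1\}$ consists only of powers of $T$ and is automatically SI (as recalled in Section \ref{S2} following Definition \ref{D6}); this degenerate subcase needs no further argument. If instead $T$ is nonselfadjoint, then $T$ is a nonselfadjoint invertible matrix, and Theorem \ref{inverse} applies verbatim, yielding that $\Sc(T,T^*)$ is SI if and only if it is simple, which is exactly the assertion of the statement.

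I expect no genuine obstacle here: the result is a true corollary, essentially bookkeeping of the case division. The only point requiring a moment's care is confirming that the classification is genuinely complete, namely that in rank two every matrix is either dispatched by the trivial selfadjoint subcase or covered by Theorem \ref{inverse}, and flagging, as the statement already does, that this completeness relies on full rank forcing invertibility in $M_2(\mathbb{C})$ and so need not transfer to rank-two operators in $F(\mathcal{H})$.
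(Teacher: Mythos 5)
Your proof is correct and follows essentially the same route as the paper, which presents this corollary as an immediate consequence of Theorem \ref{inverse} together with the rank-one classification cited from \cite[Summary preceding Remark 3.21]{PW21}. Your additional handling of the zero matrix and the selfadjoint rank-two subcase is sound bookkeeping that the paper leaves implicit.
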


\section{Characterization of SI semigroups $\SC(T, T^*)$ generated by a 
Jordan matrix $T$}\label{S4}

In our earlier study of the SI property of semigroups, we recall from \cite[Theorem 1.21]{PW21} that the SI property of a semigroup is unitary invariant, but not similarity invariant. In particular, if $A$ and $B$ are similar matrices (but not unitarily similar) and $\SC(A,A^*)$ is an SI semigroup, then $\SC(B,B^*)$ may not be an SI semigroup (see \cite[Remark 3.22]{PW21}). 
From linear algebra, we know that every finite matrix $A$ is unitarily equivalent to an upper triangular matrix $B$. But when $A$ and $B$ are unitarily equivalent, $\SC(A, A^*)$ is an SI semigroup if and only if $\SC(B, B^*)$ is an SI semigroup, so one would naturally be inclined instead to investigate the SI property of $\SC(B, B^*)$. 

 For invertible matrices $B$, $\SC(B, B^*)$ is SI if and only if it is simple (Theorem \ref{inverse})). But for a non-invertible $B$, the study of the SI property of $\SC(B, B^*)$ where $B$ is an upper triangular matrix, even in the case of $3 \times 3$ matrix, seemed to us computationally complicated, and in higher dimensions, even more intractable. 

Nevertheless, we know from linear algebra that every matrix is similar (via an invertible, not necessarily unitary) to a matrix in Jordan form and rational cannonical form (i.e., direct sum of Jordan and rational cannonical blocks).  
These similarity equivalent matrices are central to the study of linear algebra in part because some of the crucial properties of matrices are similarity invariant, as for instance, rank, nullity, determinant, and trace. 
But because rational cannonical matrices are much more complicated than Jordan matrices to compute with in solving the necessary bilinear equations, in this section we investigate the SI nature of Jordan matrices.
We found Jordan matrices much more approachable albeit nontrivial.

In this section, we focus on a characterization of SI semigroups $\SC(T,T^*)$ generated by a Jordan matrix $T$.
By a Jordan matrix $T$ (i.e., Jordan form) we will always mean a block diagonal matrix 
$T=\oplus_{i=1}^k J_{m_i}(\lambda_i)$, where $k\geq 1$ and 
$J_{m_i}(\lambda_i)$ is Jordan block ($m_i \times m_i$ block corresponding to eigenvalue $\lambda_i$ which has all the diagonal entries equal to $\lambda_i$ and all supdiagonal entries equal to one (see below Equation (\ref{eq1'})).
 Note that a Jordan matrix is a nonnormal matrix whenever at least one of its blocks is larger than $1 \times 1$. 
And a Jordan block is invertible if and only if its $\lambda \ne 0$.

Our focus in this section is to obtain a necessary and sufficient condition on the Jordan matrix $A$ for $\Sc(A, A^*)$ to be SI. Theorem \ref{thm 2.1} provides in the nonselfadjoint case an either/or necessary condition 
($\ker A = \ker A^2$ or $A$ is a partial isometry) for 
$\Sc(A, A^*)$ to be SI. Also, since $\Sc(A, A^*)$ being SI implies each $\Sc(J_{m_i}(\lambda_i),J^*_{m_i}(\lambda_i))$ is SI for $1\leq i\leq k$ (Proposition \ref{prop2}), then using Proposition \ref{prop3}, 
a necessary condition for each nonselfadjoint invertible Jordan block to generate an SI semigroup is that $|\lambda_i| = 1$ for each nonzero $\lambda_i$. So in order to obtain a sufficient condition on $A$ for $\Sc(A, A^*)$ to be SI, we must restrict to the unit circle the nonzero $\lambda$'s that appear in the Jordan blocks of the Jordan matrix $A$. We first investigate the selfadjoint semigroup generated by each Jordan block and establish several results that lead to a sufficient condition for $\Sc(A, A^*)$ to be an SI semigroup. We then prove  in Theorem \ref{jordan} the necessary and sufficient condition that Jordan matrix $A$ be a power partial isometry in order for 
$\Sc(A, A^*)$ to be an SI semigroup.

For the study of the SI property of $\Sc(T,T^*)$ generated by a Jordan matrix $T$, 
motivated by Theorem \ref{thm 2.1}, we split the class of Jordan matrices into two cases: (i) $\ker T\neq \ker T^2$ and (ii) $\ker T= \ker T^2$. Since a Jordan matrix is the direct sum of Jordan blocks and its SI status is determined by the SI status of its Jordan blocks (Proposition \ref{prop2}), we will first consider the special case when $T$ is a single nonselfadjoint Jordan block, i.e., $T=J_m(\lambda)$ with 
$m\geq 2,~\text{and in the special case when}~ |\lambda|=1$ 
 we will prove that for such a matrix $T$, $\Sc(T,T^*)$ is not an SI semigroup. Later on, we will use this special case to obtain a characterization of SI semigroups $\Sc(T,T^*)$ generated by a Jordan matrix $T$. We may occasionally write $J_m$ for $J_m(\lambda)$ and $\mathcal{W}$  for 
a word $\mathcal{W}(T,T^*)$ in $T,T^*$ whenever $\lambda$ and the matrix $T$ is clear from the context, respectively. First we make a few observations about $J_N(\lambda)$. 
Firstly, $1 \times 1$ Jordan blocks are simply those that have entry $\lambda$, the only normal Jordan blocks. For higher dimensions nonnormality is easily verified.

\vspace{.2cm}

\noindent\textbf{Observations:} For  any $N\geq 2$ the Jordan block matrix $J_{N}(\lambda)$ is given by: \begin{equation}\label{eq1'} 
J_{N}(\lambda)= \begin{bmatrix}
\lambda & 1 & 0&\cdots & 0\\
0 & \lambda & 1 & \cdots & 0\\
\vdots & \vdots & \vdots & \ddots& 1\\ 
0 & 0 & 0& \cdots  &\lambda\\
\end{bmatrix}_{N\times N}
\end{equation}

Let $W$ be the constant weighted shift matrix for $\lambda \ne 0$ defined as:

\begin{equation}\label{eq2}
We_i:=\begin{cases}
(1/\bar{\lambda})\, e_{i+1} \hspace{.2cm} \text{if}~ 1\leq i \leq N-1\\
0 \hspace{.9cm} \text{if}~i=N
\end{cases} 
\end{equation}
Then, one can rewrite $J_{N}(\lambda)$ as 
\begin{equation}\label{eq1}
J_{N}(\lambda)= \begin{bmatrix}
\lambda & 1 & 0&\cdots & 0\\
0 & \lambda & 1 & \cdots & 0\\
\vdots & \vdots & \vdots & \ddots& 1\\ 
0 & 0 & 0& \cdots  &\lambda\\
\end{bmatrix}_{N\times N}=\lambda(I+W^*) 
\end{equation}

Note that $W^N={W^*}^N=0$ and $W^{N-1}={W^*}^{N-1}\ne 0$. Let $T:= I+W^*$, then $J_N(\lambda)=\lambda T$ from Equation (\ref{eq1}). We rewrite $T$ this way and work with $W$ 
instead because it is computationally more convenient and useful for characterizing the SI semigroup $\Sc(T, T^*)$.

We next show via the binomial expansion that an arbitrary word in $T$ and $T^*$ has a certain form in terms of a certain kind of polynomial in $W$ and $W^*$.

\begin{equation}\label{eq3}
T^n=(I+W^*)^n=I+\tbinom{n}{1}W^*+\tbinom{n}{2}{W^*}^2+\cdots + \tbinom{n}{k_n}{W^*}^{k_n}\hspace{.4cm}\text{for} ~n\geq 1
\end{equation} 
where  $k_n$ is a positive integer depending on $n$ such that $k_n=n$ for 
$n<N-1$ and $k_n=N-1$ for $n\geq N-1$ (because ${W^*}^N=0$). 
So also, \begin{equation}\label{eq4}
{T^*}^m=(I+W)^m=I+\tbinom{m}{1}W+\tbinom{m}{2}{W}^2+\cdots + \tbinom{m}{k_m}{W}^{k_m}\hspace{.4cm}\text{for} ~m\geq 1\end{equation}
where $k_m=m$ for $m<N-1$ and $k_m=N-1$ for $m\geq N-1$ (as $W^N=0$). Note that all the coefficients of the powers of $W^*$ and $W$ in Equations (\ref{eq3})-(\ref{eq4}) are positive integers, respectively. 
From these Equations (\ref{eq3})-(\ref{eq4}) 
it clearly follows that 
any word $\mathcal{W}(T,T^*)$ in $T$ and $T^*$ is given by:
\begin{equation}\label{eq5}
\mathcal{W}(T,T^*)=I+\mathcal{P}(W,W^*)
\end{equation}
where $I$ is the $N\times N$ identity matrix and 
$\mathcal{P}(W,W^*)$ is a polynomial in $W,W^*$ with no constant term and the coefficient of each  monomial that appears in the polynomial is a positive integer.

For the constant $\lambda$ weighted shift $W$ defined in Equation (\ref{eq2}), the case 
$|\lambda| = 1$ needed for this paper, next we prove the following proposition about words in $W,W^*$:
\begin{proposition}\label{prop1}
Let $W$ be a constant $\lambda$ weighted shift.
For $\mathcal{W}$ a word in $W,W^*$ with nonzero diagonal, 
its nonzero diagonal entries are all equal to ${|\lambda|}^{-2p}$ for some $p \ge 1$, 
and 1 in the case $|\lambda| = 1$. .
\end{proposition}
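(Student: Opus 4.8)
The plan is to track how a word in $W$ and $W^*$ moves a single basis vector $e_i$, exploiting that both $W$ and $W^*$ act as shifts with \emph{constant} weights. First I would record the two elementary actions: by Equation (\ref{eq2}), $We_i = (1/\bar\lambda)\,e_{i+1}$ raises the index by one (and annihilates $e_N$), while its adjoint satisfies $W^*e_i = (1/\lambda)\,e_{i-1}$, lowering the index by one (and annihilating $e_1$). The crucial structural feature to isolate at the outset is that the nonzero weight is the constant $1/\bar\lambda$ for \emph{every} application of $W$ and the constant $1/\lambda$ for every application of $W^*$, regardless of the index at which the letter is applied.

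Next I would evaluate an arbitrary word $\mathcal{W} = X_r X_{r-1}\cdots X_1$, with each $X_s \in \{W, W^*\}$, on a fixed $e_i$ by applying the letters from right to left. At each stage the current vector is either sent to $0$ (whenever a boundary is overstepped) or to a scalar multiple of a single basis vector whose index has moved up or down by one. Consequently $\mathcal{W}e_i$ is either $0$ or equals $c\,e_j$, where $j = i + a - b$ with $a$ the number of $W$'s and $b$ the number of $W^*$'s appearing in $\mathcal{W}$, and where $c$ is the product of all accumulated weights. Because the weights are constant, whenever the path survives this product is exactly $c = (1/\bar\lambda)^a(1/\lambda)^b$, a value that does \emph{not} depend on the starting index $i$; only whether the path remains inside $\{1,\ldots,N\}$ depends on $i$.

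From here the proposition follows quickly. The diagonal entry $\langle \mathcal{W}e_i, e_i\rangle$ can be nonzero only when $j = i$, that is, only when $a = b$; set $p := a = b$. For such a word every surviving (nonzero) diagonal entry equals the single value $c = (1/\bar\lambda)^p(1/\lambda)^p = |\lambda|^{-2p}$, so all nonzero diagonal entries coincide and have the asserted form. Since a word has length at least one we have $a + b \ge 1$, and combined with $a = b$ this forces $p = a = b \ge 1$. Substituting $|\lambda| = 1$ gives $|\lambda|^{-2p} = 1$, which is the final assertion.

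The one point demanding care — and the heart of the argument — is the claim that the surviving coefficient $c$ is independent of the starting index $i$ and of the particular arrangement of the letters. This is precisely where the constancy of the weights is used: for a general (non-constant) weighted shift the product of weights along the path would depend on which subdiagonal and superdiagonal entries are traversed, and the nonzero diagonal entries could then differ from one another. I would therefore stress in the write-up that it is exactly the equality of all the $W$-weights to $1/\bar\lambda$ (and all the $W^*$-weights to $1/\lambda$) that collapses every surviving coefficient to the single quantity $|\lambda|^{-2p}$.
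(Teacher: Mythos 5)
Your proposal is correct and takes essentially the same route as the paper's proof: both track the image of a basis vector $e_i$ under the word, use the constancy of the weights to see that the surviving coefficient $(1/\bar{\lambda})^{a}(1/\lambda)^{b}$ is independent of the starting index $i$, and conclude that a nonzero diagonal entry forces $a=b=p\ge 1$, yielding the common value $|\lambda|^{-2p}$. The only difference is bookkeeping: the paper decomposes the word into blocks of powers according to the semigroup list in Equation (\ref{eq8}) and peels off blocks via Equations (\ref{eq6})--(\ref{eq7}), with a case analysis over the several forms, whereas your letter-by-letter induction handles all words uniformly and avoids that case split.
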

\begin{proof}
	It follows from the definition given in Equation (\ref{eq2}) that for any powers $n,m \geq 1$ one has: 
		\begin{equation}\label{eq6}
	W^ne_i=(1/\bar{\lambda}^n)e_{i+n}\quad\text{if}\quad W^ne_i \neq0 
\quad \text{(i.e., $1\leq n \leq N-1$ and $1 \le i \le N-n$).}
	 \end{equation}
Also for any power $m\geq 1$ one has:
	 \begin{equation}\label{eq7}
	{W^*}^me_i=(1/\lambda^m)e_{i-m}\quad \text{if}\quad {W^*}^me_i \neq 0 
\quad \text{(i.e., $1\le m \le N-1$ and $m + 1 \le i \le N$).}
	 \end{equation}
Also any word in $W,W^*$ has at least one of the following possible forms by Equation (\ref{singly generated semigroup list}):
	 \begin{equation}\label{eq8}
	 \{W^n, {W^*}^n,  \Pi_{j=1}^{k}{W^*}^{m_j}W^{n_j},
	 (\Pi_{j=1}^{k}{W^*}^{m_j}W^{n_j}){W^*}^{m_{k+1}},\Pi_{j=1}^{k}W^{n_j}{W^*}^{m_j},(\Pi_{j=1}^{k}W^{n_j}{W^*}^{m_j})W^{n_{k+1}}\}
	 \end{equation} 
		 where $n \ge 1,\,  k\ge1,\, n_j, m_j \ge 1\, \text{ for }\, 1 \le j \le k \text{ and } n_{k+1},m_{k+1}\geq 1$. It is clear as weighted shifts or their adjoints, or from Equations (\ref{eq6})-(\ref{eq7}), that for $n\geq 1,~~W^n~~\text{and}~~{W^*}^n$ have their main diagonal entries all equal to zero. By hypothesis, since $\mathcal W$ has its main diagonal entries not all zero, so $\mathcal{W}$ must have one of the last four forms in above Equation (\ref{eq8}).
	  Suppose $\mathcal{W}$ has the the third form, i.e., $\mathcal{W}=\Pi_{j=1}^{k}{W^*}^{m_j}W^{n_j}$ with at least some $1 \le i \le N$ with  $\left<\mathcal{W}e_i, e_i\right> \neq 0$. Then
	 \begin{equation}\label{eq9}
	 0\neq \mathcal{W}e_i=\Pi_{j=1}^{k}{W^*}^{m_j}W^{n_j}e_i
	 \end{equation}
	 Since $\mathcal{W}e_i\neq 0,~\text{one has}~{W^*}^{m_k}W^{n_k}e_i\neq 0$, therefore $W^{n_k}e_i \neq 0$ and ${W^*}^{m_k}e_{i+n_k} \neq 0$, so by Equations (\ref{eq6})-(\ref{eq7}) one obtains,
	 \begin{equation*}
	 0\neq{W^*}^{m_k}W^{n_k}e_i=({1/\bar{\lambda}}^{n_k}){W^*}^{m_k}e_{i+n_k}
=({1/\bar{\lambda}}^{n_k}\lambda^{m_k})e_{i+(n_k-m_k)} 
	 \end{equation*}
with the nonzeroness dictating $1\le i+(n_k-m_k) \le N$ and the other three constraints in Equations (\ref{eq6})-(\ref{eq7}). 

And notice the coefficient remains fixed at $({1/\bar{\lambda}}^{n_k}\lambda^{m_k})$ for all such $i$. 
	 Then proceeding to peel off terms,  Equation (\ref{eq9}) becomes, for the first equality when $k \ge 2$ and for the second equality when $k \ge 3$, 
	 \begin{align*}
 	 0\neq \mathcal{W}e_i &= 
({1/\bar{\lambda}}^{n_k}\lambda^{m_k})\Pi_{j=1}^{k-1}{W^*}^{m_j}W^{n_j}e_{i+(n_k-m_k)} \\
&=({1/\bar{\lambda}}^{n_k}\lambda^{m_k})\Pi_{j=1}^{k-2}{W^*}^{m_j}W^{n_j}({W^*}^{m_{k-1}}W^{n_{k-1}}e_{i+(n_k-m_k)}).
	 \end{align*}
That is,
\begin{equation}\label{eq11}
0\neq \mathcal{W}e_i =({1/\bar{\lambda}}^{n_k}\lambda^{m_k})\Pi_{j=1}^{k-2}{W^*}^{m_j}W^{n_j}({W^*}^{m_{k-1}}W^{n_{k-1}}e_{i+(n_k-m_k)})
\end{equation}

and again ${W^*}^{m_{k-1}}W^{n_{k-1}}e_{i+(n_k-m_k)}\neq 0$, so using again Equations (\ref{eq6})-(\ref{eq7}) one obtains,
	$${W^*}^{m_{k-1}}W^{n_{k-1}}e_{i+(n_k-m_k)}
={(1/\bar{\lambda}}^{n_{k-1}}\lambda^{m_{k-1}})e_{i+(n_k-m_k)+(n_{k-1}-m_{k-1})}$$
and as before these subscripts from the nonzeroness are dictated by the contraints in  Equations (\ref{eq6})-(\ref{eq7}). Then	
substituting this value in Equation (\ref{eq11}) one has:
	$$0 \ne \mathcal{W}e_i={(1/\bar{\lambda}}^{(n_k+n_{k-1})}\lambda^{(m_k+m_{k-1})})\Pi_{j=1}^{k-2}{W^*}^{m_j}W^{n_j}e_{i+(n_k-m_k)+(n_{k-1}-m_{k-1})}.$$
	Continuing this successively, after $k$-steps, one finally obtains,
	\begin{equation}\label{eq12}
	\mathcal{W}e_i={(1/\bar{\lambda}}^{p}\lambda^{q})e_{i+p-q}
	\end{equation}
where $p:=\sum_{j=1}^{k}n_j$	and $q:=\sum_{j=1}^{k}m_j$ are the sum of powers of $W$ and the sum of powers of $W^*$ that appear in $\mathcal{W}$, respectively. Since $\left<\mathcal{W}e_i,e_i\right> \neq 0$, it follows that
$$0\neq \left<\mathcal{W}e_i,e_i\right>={(1/\bar{\lambda}}^{p}\lambda^{q})\left<e_{i+p-q},e_i\right>,$$
hence $\left<e_{i+p-q},e_i\right> \neq 0$. Therefore, $i+p-q=i$, or equivalently, $p=q$ and hence for all such $i$,
$$\left<\mathcal{W}e_i,e_i\right>={|\lambda|}^{-2p}$$ This completes the proof for the case when $\mathcal{W}$ has third form.

 For the other cases when $\mathcal{W}$ is in the other forms in the list given by Equation (\ref{eq8}), one can similarly compute the expressions for $\mathcal{W}e_i$ using Equations (\ref{eq6})-(\ref{eq7}) to obtain the same conclusion given in Equation (\ref{eq12}). Therefore, all the nonzero diagonal entries of the main diagonal of $\mathcal W$ are equal to ${|\lambda|}^{-2p}$.
	\end{proof}
\begin{remark}\label{rem}
	Let $\mathcal{P}(W,W^*)$ be a polynomial in $W,W^*$ with positive integer coefficients and with no constant term. 
Then when $\lambda \in \mathbb{S}^1$, that  
the diagonal entries of the main diagonal of $\mathcal{P}(W,W^*)$ are nonnegative integers follows from
 Proposition~\ref{prop1}. More generally, again from Proposition~\ref{prop1}, for $\lambda \neq 0$, the diagonal entries of the main diagonal of $\mathcal{P}(W,W^*)$ are nonnegative numbers.
  \end{remark}
Recall that our goal is to prove that for $N\geq 2~\text{and}~\lambda\in \mathbb{S}^1$, the semigroup $\Sc(T,T^*)$ generated by $J_N(\lambda)$ is not an SI semigroup (see Theorem \ref{theorem1} below). Towards proving this, we first show that for $T:=I+W^*$ where $W$ is defined in Equation (\ref{eq2})), the $(1,1)$-diagonal entry of each member in $(T^n{T^*}^m)_{\Sc(T,T^*)}$, the principal ideal  in $\Sc(T,T^*)$ generated by $T^n{T^*}^m$, for $n, m \geq 1$, is a number strictly greater than one. As a consequence, in Corollary \ref{nonsimple}, we prove that the semigroup $\Sc(T, T^*)$ is not SI when $T= I + W^*$.

\begin{lemma}\label{lemma1}
	For $T=I+W^*$ where $W$ is defined in Equation (\ref{eq2}) with $\lambda \ne 0$ and $n,m \geq 1$, 
each member of $(T^n{T^*}^m)_{\Sc(T,T^*)}$ has its $(1,1)$-diagonal entry 
 is greater than or equal to $1 + nm|\lambda|^{-2}$. 
	\end{lemma}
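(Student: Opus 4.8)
The plan is to reduce the whole statement to a single positivity principle together with one explicit entry computation. First I would unwind what the principal ideal $(T^n{T^*}^m)_{\Sc(T,T^*)}$ contains: by the definition of an ideal (Definition \ref{D3}), a typical member has the form $X\1(T^n{T^*}^m)\1Y$ with $X,Y\in\Sc(T,T^*)\cup\{I\}$, so it suffices to bound the $(1,1)$-entry of every such product. Since each of $X$, $T^n{T^*}^m$, and $Y$ is a word in $T=I+W^*$ and $T^*=I+W$, Equations (\ref{eq3})--(\ref{eq5}) show that each factor, and hence their product, is a polynomial in $W,W^*$ whose monomial coefficients are nonnegative (indeed positive integers), the bare generator $T^n{T^*}^m$ arising from the case $X=Y=I$.

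The key fact I would isolate is a positivity principle: \emph{any} polynomial in $W,W^*$ with nonnegative coefficients has all of its main-diagonal entries nonnegative. This is immediate from Proposition \ref{prop1} (equivalently Remark \ref{rem}): each monomial is a word in $W,W^*$ whose diagonal entries are either $0$ or $|\lambda|^{-2p}>0$, and the constant monomial $I$ contributes $1$, so a nonnegative combination of such matrices has nonnegative diagonal.

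Next I would compute the $(1,1)$-entry of the central factor $Z:=T^n{T^*}^m$ exactly. Expanding binomially gives $Z=\sum_{a,b}\binom{n}{a}\binom{m}{b}(W^*)^aW^b$, and since $\langle (W^*)^aW^b e_1,e_1\rangle=\langle W^be_1,W^ae_1\rangle$ vanishes unless $a=b$ (the two shifts must carry $e_1$ to the same basis vector), one obtains $\langle Ze_1,e_1\rangle=\sum_{a}\binom{n}{a}\binom{m}{a}|\lambda|^{-2a}$. The $a=0$ term is $1$, and because $N\ge2$ forces $W\neq0$ while $n,m\ge1$, the $a=1$ term equals $\binom{n}{1}\binom{m}{1}|\lambda|^{-2}=nm|\lambda|^{-2}$; the remaining terms are nonnegative, so $\langle Ze_1,e_1\rangle\ge 1+nm|\lambda|^{-2}$. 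To finish, write $X=I+P$ and $Y=I+Q$, where $P,Q$ are nonnegative-coefficient polynomials in $W,W^*$ with no constant term (with $P=0$ or $Q=0$ when the factor is $I$), and expand $XZY=Z+PZ+ZQ+PZQ$. Each of $PZ$, $ZQ$, $PZQ$ is again a nonnegative-coefficient polynomial in $W,W^*$, since sums and products preserve nonnegativity of the monomial coefficients, so by the positivity principle each has nonnegative $(1,1)$-entry. Hence $\langle XZYe_1,e_1\rangle\ge\langle Ze_1,e_1\rangle\ge 1+nm|\lambda|^{-2}$, the asserted bound.

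The step I expect to require the most care is the passage from Proposition \ref{prop1} to this positivity principle and its stability under the noncommutative products $PZ$, $ZQ$, $PZQ$: one must check that multiplying words in the noncommuting letters $W,W^*$ never produces a negative monomial coefficient, and that Proposition \ref{prop1} genuinely pins \emph{every} diagonal entry of a single word to be either $0$ or a positive power of $|\lambda|^{-2}$, so that no cancellation across monomials can drag the $(1,1)$-entry below the contribution $nm|\lambda|^{-2}$ supplied by $Z$. Everything else is routine binomial bookkeeping.
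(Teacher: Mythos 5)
Your proof is correct and follows essentially the same route as the paper's: both reduce a general member of the ideal to $X(T^n{T^*}^m)Y$ with $X=I+P$, $Y=I+Q$, extract the contribution $1+nm|\lambda|^{-2}$ from the binomial expansion of $T^n{T^*}^m$ (the paper isolates the single term $nmW^*W=nm(|\lambda|^{-2}I_{N-1}\oplus 0)$ where you compute the full $(1,1)$-entry $\sum_a\binom{n}{a}\binom{m}{a}|\lambda|^{-2a}$), and then invoke Proposition \ref{prop1} / Remark \ref{rem} to see that all remaining terms contribute nonnegatively to the $(1,1)$-entry. The only difference is bookkeeping: the paper absorbs the cross terms $PZ$, $ZQ$, $PZQ$ into a single positive-integer-coefficient polynomial $\mathcal{P}(W,W^*)$, which is exactly your positivity principle in disguise.
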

\begin{proof}
	Using Equations (\ref{eq3})-(\ref{eq4}), we express $T^n{T^*}^m$ in terms of $W,W^*$ as follows:
	\begin{align*}
	T^n{T^*}^m=&\{I+\tbinom{n}{1}W^*+\tbinom{n}{2}{W^*}^2+\cdots\tbinom{n}{k_n}{W^*}^{k_n}\}\{I+\tbinom{m}{1}W+\tbinom{m}{2}{W}^2+\cdots + \tbinom{m}{k_m}{W}^{k_m}\}\\
	&=\{I+n_1W^*+n_2{W^*}^2+\cdots n_{k_n}{W^*}^{k_n}\}\{I+m_1W+m_2{W}^2+\cdots + m_{k_m}{W}^{k_m}\}\\
	&(n_i,m_j~\text{positive integers depending on $n,m$ with}~~ 1\leq i\leq k_n,1\leq j\leq k_m ~\text{and}~~n_1,m_1=n,m)\\
	&=I+nmW^*W+\mathcal{Q}(W,W^*)
	\end{align*}
	where $\mathcal{Q}(W,W^*)$ is some polynomial in $W,W^*$ with no constant term and the coefficients of all nonzero terms in $\mathcal{Q}(W,W^*)$ are positive integers. 
Observe that since $W^*W= |\lambda|^{-2}I_{N-1}\oplus0$, one has for $I+nmW^*W$, its $(1,1)$-diagonal entry equal to $1+nm|\lambda|^{-2}$.
	
	For every $B \in (T^n{T^*}^m)_{\Sc(T,T^*)}$ one has $B=X(T^n{T^*}^m)Y$ for some $X,Y\in\Sc(T,T^*)\cup\{I\}$ \cite[Lemma 1.7]{PW21}. If $X~(\text{or}~ Y)\in \Sc(T,T^*)$, then $X~(\text{or}~ Y)$ is a word in $T,T^*$ and so from Equation (\ref{eq5}), $X~(\text{or}~ Y)$ is of the form $I+\mathcal{P}(W,W^*)$, where $\mathcal{P}(W,W^*)$ is a polynomial in $W,W^*$ with no constant term and with positive integer coefficients. For the cases $X~(\text{or}~Y)=I$, we take that $\mathcal{P}(W,W^*)$ is the zero polynomial. With this convention, one can rewrite $X(T^n{T^*}^m)Y$ as:
	\begin{equation}\label{eq13}
	X(T^n{T^*}^m)Y=(I+\mathcal{P}_1(W,W^*))(I+nmW^*W+\mathcal{Q}(W,W^*))(I+\mathcal{P}_2(W,W^*))
	\end{equation}
	where $\mathcal{P}_i(W,W^*)$ is either the zero polynomial or a polynomial in $W,W^*$ with no constant term and with positive integer coefficients for $i=1,2$, and $\mathcal{Q}(W,W^*)$ is a polynomial in $W,W^*$ with no constant term and which also has positive integer coefficients. Therefore, one can further simplify the expression in Equation (\ref{eq13}) and rewrite it as
	$$	X(T^n{T^*}^m)Y=I+nmW^*W+\mathcal{P}(W,W^*),$$
	where $\mathcal{P}(W,W^*)$ is a ploynomial in $W,W^*$ with no constant term and with positive integer coefficients. From Remark~\ref{rem}, $\mathcal{P}(W,W^*)$ has all the diagonal entries of its main diagonal as nonnegative numbers. Also, we discussed earlier that $I+nmW^*W$ has its $(1,1)$-diagonal entry 
$1 + nm|\lambda|^{-2}$, and hence $I+nmW^*W+\mathcal{P}(W,W^*)$ has its $(1,1)$-diagonal entry greater or equal to $1 + nm|\lambda|^{-2}$. 
Therefore $X(T^n{T^*}^m)Y$ has its $(1,1)$-diagonal entry 
 greater or equal to $1 + nm|\lambda|^{-2}$. Since $B$ is arbitrary,  
this completes the proof. 
	\end{proof}
	
\begin{corollary}\label{nonsimple}
For $T=I+W^*$ where $W$ is defined in Equation (\ref{eq2}) 
 with $\lambda \ne 0$, the semigroup $\Sc(T, T^*)$ is not an SI semigroup.
\end{corollary}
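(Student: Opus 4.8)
The plan is to deduce the failure of the SI property from a failure of simplicity, using Theorem~\ref{inverse}. First I would observe that $T = I + W^*$ is a nonselfadjoint invertible matrix: it is upper triangular with every diagonal entry equal to $1$ (since $W^*$ is strictly upper triangular), hence invertible with $\det T = 1$; and because $N \ge 2$ the shift $W \ne 0$, so the strictly lower triangular $W$ cannot equal the strictly upper triangular $W^*$, giving $T = I+W^* \ne I+W = T^*$. Thus $T$ satisfies the hypotheses of Theorem~\ref{inverse}, which asserts that $\Sc(T, T^*)$ is SI if and only if it is simple. Consequently it suffices to exhibit a single proper ideal of $\Sc(T, T^*)$.

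The natural candidate is the principal ideal $(TT^*)_{\Sc(T,T^*)}$ generated by $TT^* = T^1{T^*}^1$. Applying Lemma~\ref{lemma1} with $n = m = 1$, every member of this ideal has its $(1,1)$-diagonal entry at least $1 + |\lambda|^{-2}$, which is strictly greater than $1$ because $\lambda \ne 0$. On the other hand, the generator $T = I + W^*$ is itself a member of $\Sc(T, T^*)$ and has its $(1,1)$-diagonal entry equal to $1$, since $W^*$ is strictly upper triangular and contributes nothing to the main diagonal. Comparing these two numbers shows $T \notin (TT^*)_{\Sc(T,T^*)}$.

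Hence $(TT^*)_{\Sc(T,T^*)}$ is a proper ideal, as it omits the semigroup element $T$, so $\Sc(T, T^*)$ is not simple, and therefore by Theorem~\ref{inverse} it is not SI. I expect no serious obstacle in carrying this out: the whole argument rests on the strict inequality $1 < 1 + |\lambda|^{-2}$ supplied by Lemma~\ref{lemma1}, together with the elementary observation that the main diagonal of $I + W^*$ is identically $1$. The only points requiring care are checking the two hypotheses needed to invoke Theorem~\ref{inverse} (invertibility is immediate, while nonselfadjointness uses $N \ge 2$), and confirming that a single omitted semigroup element genuinely witnesses properness of the ideal and hence non-simplicity. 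Note also that any $(T^n{T^*}^m)_{\Sc(T,T^*)}$ with $n,m \ge 1$ would serve equally well, since its members have $(1,1)$-entry at least $1 + nm|\lambda|^{-2} > 1$, so the choice $n=m=1$ is merely the most economical.
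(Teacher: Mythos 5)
Your proof is correct and follows essentially the same route as the paper: both reduce the SI property to simplicity via Theorem~\ref{inverse} and then rule out simplicity by applying Lemma~\ref{lemma1} (with $n=m=1$) to the principal ideal $(TT^*)_{\Sc(T,T^*)}$, comparing $(1,1)$-diagonal entries to see that $T$ is omitted. The only difference is presentational—you argue directly by exhibiting a proper ideal, while the paper phrases it as a contradiction—which is immaterial.
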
	
\begin{proof}
Suppose $\Sc(T, T^*)$ is an SI semigroup. Since $T$ is a nonselfadjoint invertible matrix, it follows from the SI equivalence Theorem \ref{inverse} that $\Sc(T, T^*)$ is simple. This implies that $T$ is contained in every nonzero principal ideal, in particular, in the principal ideal $(TT^*)_{\Sc(T, T^*)}$. But by Lemma \ref{lemma1}, every element of this principal ideal ($XTT^*Y$ with $X,Y \in \Sc(T, T^*) \cup \{I\}$) has its $(1,1)$-diagonal entry bigger than one and the $(1, 1)$-diagonal entry of $T$ 
 is precisely one. So, $T$ is not in this principal ideal, contradicting the simplicity. Therefore, $\Sc(T, T^*)$ is not an SI semigroup.
\end{proof}
	
 We now prove the nonsimplicity of the semigroup $\Sc(A,A^*)$ generated by $A = J_N(\lambda)$, where $N\geq 2$ and $\lambda\in \mathbb{S}^1$.
 \begin{theorem}\label{theorem1}
For any $N\geq 2, |\lambda| \ge 1$ and $A:=J_N(\lambda)$, the semigroup $\Sc(A,A^*)$ is not an SI semigroup.
 \end{theorem}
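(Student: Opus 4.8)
The plan is to invoke the invertible dichotomy of Theorem \ref{inverse}: since $N \ge 2$ forces $A = J_N(\lambda)$ to be nonselfadjoint and $|\lambda| \ge 1$ forces $A$ to be invertible ($\lambda \neq 0$), the semigroup $\Sc(A, A^*)$ is SI if and only if it is simple. Hence it suffices to exhibit a nonzero principal ideal that fails to contain the generator $A$, thereby showing $\Sc(A,A^*)$ is not simple. As in Corollary \ref{nonsimple}, the natural candidate is $(AA^*)_{\Sc(A,A^*)}$, and the obstruction will again be the size of the $(1,1)$-diagonal entry.

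The key reduction is to pass from $A$ to $T = I + W^*$ by peeling off scalars. Writing $A = \lambda T$ and $A^* = \bar\lambda T^*$, any word $X$ in $A, A^*$ having $p_1$ factors equal to $A$ and $q_1$ factors equal to $A^*$ satisfies $X = \lambda^{p_1}\bar\lambda^{q_1} X'$, where $X'$ is the corresponding word in $T, T^*$ (obtained by replacing $A \mapsto T$ and $A^* \mapsto T^*$ in the same order), and similarly $Y = \lambda^{p_2}\bar\lambda^{q_2} Y'$. Since $AA^* = |\lambda|^2\, TT^*$, every element of $(AA^*)_{\Sc(A,A^*)}$, which by \cite[Lemma 1.7]{PW21} has the form $X(AA^*)Y$ with $X, Y \in \Sc(A,A^*) \cup \{I\}$, can be rewritten as
\[
X(AA^*)Y = |\lambda|^2\,\lambda^{p_1+p_2}\,\bar\lambda^{\,q_1+q_2}\, X'(TT^*)Y',
\]
where $X'(TT^*)Y'$ is an element of the ideal $(TT^*)_{\Sc(T,T^*)}$ with $X', Y' \in \Sc(T,T^*) \cup \{I\}$.

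Now Lemma \ref{lemma1} with $n = m = 1$ guarantees that the $(1,1)$-entry of $X'(TT^*)Y'$ is a real number $r \ge 1 + |\lambda|^{-2}$; here I use that, by Proposition \ref{prop1} and Remark \ref{rem}, these diagonal contributions are nonnegative reals, so no cancellation occurs. Consequently the $(1,1)$-entry of $X(AA^*)Y$ equals $|\lambda|^2 \lambda^{p_1+p_2}\bar\lambda^{q_1+q_2} r$, whose modulus is $|\lambda|^{\,2 + p_1+p_2+q_1+q_2}\, r$. This is exactly where the hypothesis $|\lambda| \ge 1$ enters: it forces $|\lambda|^{\,p_1+p_2+q_1+q_2} \ge 1$, so the modulus is at least $|\lambda|^2(1 + |\lambda|^{-2}) = |\lambda|^2 + 1$. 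Since $|\lambda|^2 + 1 > |\lambda|$ for every $\lambda$, while the $(1,1)$-entry of $A = J_N(\lambda)$ is $\lambda$ of modulus $|\lambda|$, we conclude $A \notin (AA^*)_{\Sc(A,A^*)}$. This ideal is nonzero (it contains $AA^* \ne 0$) and proper, so $\Sc(A,A^*)$ is not simple, and hence by Theorem \ref{inverse} it is not SI.

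I expect the only delicate point to be the scalar bookkeeping: one must track the exponents of $\lambda$ and $\bar\lambda$ accumulated from $X$, $Y$, and $AA^*$, and confirm that passing to moduli collapses them into the single factor $|\lambda|^{\,2+p_1+p_2+q_1+q_2}$ that the hypothesis $|\lambda| \ge 1$ controls. The underlying analytic content — that the $(1,1)$-entry of every member of $(TT^*)_{\Sc(T,T^*)}$ is a real number bounded below by $1 + |\lambda|^{-2}$ — is already supplied by Lemma \ref{lemma1}, so no genuinely new estimate is required.
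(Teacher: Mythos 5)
Your proposal is correct and follows essentially the same route as the paper's proof: reduce SI to simplicity via Theorem \ref{inverse}, peel off the scalars via $A=\lambda T$ with $T=I+W^*$, apply Lemma \ref{lemma1} with $n=m=1$ to bound the $(1,1)$-entry of every element of $(AA^*)_{\Sc(A,A^*)}$ from below in modulus, and conclude $A\notin(AA^*)_{\Sc(A,A^*)}$. The only cosmetic difference is your exponent bookkeeping ($p_1,p_2,q_1,q_2$ versus the paper's $r+k\ge 3$), which in fact lets you skip the paper's observation that not both $X,Y$ can be the identity, since $|\lambda|^2+1>|\lambda|$ already suffices.
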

\begin{proof}
As mentioned in Equation (\ref{eq1}), $A=\lambda(I+W^*)=\lambda T$, where $T=I+W^*$. 
By Theorem \ref{inverse}, since $A$ is invertible, $\Sc(A, A^*)$ possesses the SI property if and only if it is simple. So it suffices to prove nonsimplicity.
	For proving the non-simplicity of $\Sc(A,A^*)$, it suffices to show that $A\notin (AA^*)_{\Sc(A,A^*)}$. Suppose $A\in (AA^*)_{\Sc(A,A^*)}$. Then 
	$$A=X(AA^*)Y$$
	for some $X,Y\in\Sc(A,A^*)\cup\{I\}$ at least one of which is not the identity since $A$ is nonselfadjoint. Replacing $A$ by $\lambda T$ in the above display one obtains:
	\begin{equation}\label{eq14}
	\lambda T={\lambda}^r{\overline{\lambda}}^k(X'TT^*Y')
	\end{equation}
	where $r,k\geq 1, r+k \ge 3 \text{ and}~X',Y'\in\Sc(T,T^*)\cup\{I\}$. Note that $X'TT^*Y'\in (TT^*)_{\Sc(T,T^*)}$ and hence by Lemma~\ref{lemma1} (the case $n,m = 1$), the $(1,1)$-diagonal entry of $X'TT^*Y'$ is 
 greater than or equal to $1 + |\lambda|^{-2}$. 
Hence, it follows that the matrix ${\lambda}^r{\overline{\lambda}}^k(X'TT^*Y')$ in Equation (\ref{eq14}) has its $(1,1)$-diagonal entry with modulus 
greater than or equal to $|\lambda|^p(1+|\lambda|^{-2})$ for some $p \ge 3$, which itself is strictly greater than $|\lambda|$ as $|\lambda|\ge 1$. 
So we arrive at a contradiction as $\lambda T$ has all its diagonal entries equal to $\lambda$. 
Hence, $A \notin (AA^*)_{\Sc(A,A^*)}$ and so $\Sc(A,A^*)$ is not simple.
\end{proof}
We are now ready to investigate the general case of nonselfadjoint Jordan matrices and obtain a power partial isometry characterization of all SI  semigroups $\Sc(A,A^*)$ generated by nonselfadjoint Jordan matrices $A$ (Theorem \ref{jordan}). 
As all but $1 \times 1$ Jordan blocks are obviously nonselfadjoint (even nonnormal as a direct computaton shows), motivated by Theorem \ref{thm 2.1} we divide our approach into studying three cases: $\ker A\neq \ker A^2$, $\ker A = \ker A^2$ and the partial isometry case.  
Towards this, we first characterize partially isometric Jordan matrices in the following proposition. (A partially isometric Jordan matrix is a Jordan matrix which is also a partial isometry.)
Following that we  
investigate the SI property of $\Sc(A, A^*)$ by considering the two cases separately for the Jordan matrix $A$: $\ker A\neq \ker A^2$ and $\ker A = \ker A^2$.
 
 \begin{proposition}\label{prop}
	A partially isometric Jordan matrix $A$ is unitarily equivalent to $U\oplus B$, (any one summand maybe absent) where $U$ is a diagonal unitary matrix and $B$ is a direct sum of shifts.
\end{proposition}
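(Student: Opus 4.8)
The plan is to reduce the proposition to a classification of partially isometric single Jordan blocks and then to regroup the blocks by a permutation, which is a unitary equivalence. Since $A=\oplus_{i=1}^k J_{m_i}(\lambda_i)$ is block diagonal, so is $A^*A=\oplus_{i=1}^k J_{m_i}(\lambda_i)^*J_{m_i}(\lambda_i)$, and because each summand is Hermitian positive semidefinite, the positive semidefinite matrix $A^*A$ is an orthogonal projection if and only if each block $J_{m_i}(\lambda_i)^*J_{m_i}(\lambda_i)$ is a projection. Recalling the partial isometry characterization used in Example \ref{E1} (that $T$ is a partial isometry exactly when $T^*T$ is a projection), this shows $A$ is a partial isometry if and only if each Jordan block $J_{m_i}(\lambda_i)$ is a partial isometry. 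So the whole statement hinges on determining which Jordan blocks are partial isometries.

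First I would dispose of the $1\times 1$ blocks: $J_1(\lambda)=[\lambda]$ satisfies $[\lambda]^*[\lambda]=[|\lambda|^2]$, which is a projection exactly when $|\lambda|^2\in\{0,1\}$, i.e. when $\lambda=0$ or $|\lambda|=1$. The size-one blocks with $|\lambda|=1$ are precisely the (normal) unitary Jordan blocks and will be collected into $U$, while the size-one block $[0]$ is a degenerate shift and will be collected into $B$.

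Next, and this is the crux, I would show that for $m\ge 2$ the only partially isometric Jordan block is the nilpotent shift $J_m(0)$. The key computation is the diagonal of $J_m(\lambda)^*J_m(\lambda)$: since $J_m(\lambda)e_1=\lambda e_1$ and $J_m(\lambda)e_j=e_{j-1}+\lambda e_j$ for $j\ge 2$, the $j$-th diagonal entry equals $\|J_m(\lambda)e_j\|^2$, namely $|\lambda|^2$ for $j=1$ and $1+|\lambda|^2$ for $j\ge 2$. Every diagonal entry of an orthogonal projection lies in $[0,1]$, being $\langle Pe_j,e_j\rangle=\|Pe_j\|^2\le\|e_j\|^2=1$, so the requirement $1+|\lambda|^2\le 1$ forces $\lambda=0$. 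Conversely $J_m(0)^*J_m(0)=\diag(0,1,\dots,1)$ is a projection, so $J_m(0)$, the unweighted (truncated) shift, is indeed a partial isometry, in fact a power partial isometry in the sense of Corollary \ref{corollary2.6}.

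Finally I would assemble the pieces. A partially isometric Jordan matrix therefore has each block of exactly one of two types: a $1\times 1$ unimodular scalar, or a nilpotent shift $J_m(0)$ (including the degenerate case $m=1$). Conjugating by the permutation unitary that lists all the unimodular scalar blocks first and all the nilpotent blocks afterwards exhibits $A$ as unitarily equivalent to $U\oplus B$, where $U$ is the diagonal unitary assembled from the unimodular scalars and $B$ is the direct sum of the nilpotent shifts; if either family is empty the corresponding summand is absent. The only point needing care, and the main (if modest) obstacle, is the diagonal-entry estimate ruling out $\lambda\ne 0$ for blocks of size at least two, together with the bookkeeping that correctly places the degenerate $1\times 1$ zero block among the shifts rather than in the unitary part.
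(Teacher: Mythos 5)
Your proof is correct and follows essentially the same route as the paper's: reduce to single Jordan blocks (a direct sum is a partial isometry iff each block is), classify the blocks — size-one blocks with $|\lambda|\in\{0,1\}$, and for size $\ge 2$ only $\lambda=0$ survives because of the quantity $1+|\lambda|^2>1$ — then regroup by a permutation unitary. The only cosmetic difference is that you phrase the obstruction as a diagonal entry of the projection $J_m(\lambda)^*J_m(\lambda)$ exceeding $1$, while the paper phrases it as a column norm $\sqrt{1+|\lambda|^2}$ exceeding the norm of a nonzero partial isometry; these are the same computation.
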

\begin{proof}
Any direct sum is partially isometric if and only if each of its blocks is partially isometric. This is clear using the facts that partial isometries are those operators whose absolute values are projections, and projections are selfadjoint idempotents. Therefore Jordan matrix $A$ is a partial isometry if and only if each of its Jordan blocks is. It is clear (from these characterizations for instance) that a $1 \times 1$ Jordan block is a partial isometry if and only if $|\lambda| = 0~\text{or}~1$, and for $N \ge 2$, the larger shifts are partial isometries (the cases $\lambda = 0$), but for $\lambda \ne 0$,
those Jordan blocks have norms bounded below by some column norms $\sqrt {1 + |\lambda|^2}$ exceeding the norm of nonzero partial isometries which is $1$. In short, reordering the basis, the direct sum of all nonzero $1 \times 1$ Jordan matrices, if any, forms $U$, and the direct sum of the rest (the $1 \times 1$ zero blocks and the larger shifts), if any, forms $B$. 
\end{proof}

The following corollary may be known, but we will need it going forward.

\begin{corollary}\label{cor}
	A partially isometric Jordan matrix $A$ is always a power partial isometry. 
\end{corollary}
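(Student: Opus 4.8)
The plan is to leverage Proposition~\ref{prop}, which already gives the structural decomposition $A \cong U \oplus B$ (unitarily), where $U$ is a diagonal unitary matrix and $B$ is a direct sum of shifts. Since being a power partial isometry is clearly a unitary invariant (conjugating by a unitary $V$ sends $A^k = A^k(A^k)^*A^k$ to $(VAV^*)^k = (VAV^*)^k((VAV^*)^k)^*(VAV^*)^k$), it suffices to show $U \oplus B$ is a power partial isometry. Furthermore, a direct sum is a partial isometry precisely when each summand is (as noted in the proof of Proposition~\ref{prop}, since absolute values and projections respect direct sums), and the same reasoning applies to every power because $(U \oplus B)^k = U^k \oplus B^k$. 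So the proof reduces to checking the two pieces separately: $U$ and $B$.

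First I would dispatch the unitary summand $U$. Every power $U^k$ of a diagonal unitary matrix is again a diagonal unitary matrix, and unitaries are partial isometries (indeed $U^k(U^k)^* = I$ is a projection). Hence $U$ is trivially a power partial isometry. Next I would handle $B$, the direct sum of shifts. By the same direct-sum observation, it is enough to show that a single shift (a $1\times 1$ zero block or a larger nilpotent shift $J_N(0)$) is a power partial isometry. But this is exactly the content already established for weighted shifts: a shift with all weights equal to one is, by the computation in the proof of Corollary~\ref{corollary2.6}, such that ${W^*}^k W^k$ is a projection for every $k \ge 1$, so each power $W^k$ is a partial isometry. (Alternatively, one invokes that the unweighted truncated shift is the standard example of a power partial isometry.) Thus $B$ is a power partial isometry.

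Finally I would assemble the pieces: since $U^k$ and $B^k$ are both partial isometries for every $k$, their direct sum $U^k \oplus B^k = (U\oplus B)^k$ is a partial isometry for every $k$, so $U \oplus B$—and therefore $A$—is a power partial isometry. I do not expect a genuine obstacle here; the corollary is essentially a bookkeeping consequence of Proposition~\ref{prop} together with the already-proven fact that unitaries and unweighted shifts are power partial isometries, and the fact that the power-partial-isometry property passes through direct sums and unitary equivalence. The only point requiring a sentence of care is the passage from ``partial isometry at each power'' for the summands to the same statement for the direct sum, which follows from $(X \oplus Y)^k = X^k \oplus Y^k$ and the direct-sum characterization of partial isometries used in Proposition~\ref{prop}.
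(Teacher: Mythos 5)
Your proof is correct and follows essentially the same route as the paper's: reduce via Proposition~\ref{prop} to the summands $U$ and $B$, note that powers of unitaries and of shifts are partial isometries, and use that direct sums of partial isometries are partial isometries. The only cosmetic difference is that you cite the computation in Corollary~\ref{corollary2.6} for the shift case, where the paper simply calls it a straightforward computation.
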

\begin{proof}
By Proposition~\ref{prop}, it suffices to show all powers of $U$ and $B$ are partial isometries.
Clearly all powers of unitary operators are unitary and hence partial isometries, as well as powers of zero matrices. And a straightforward computation shows powers of shifts are partial isometries.
And then powers of direct sums of power partial isometries are partial isometries as mentioned in the previous proof.
	\end{proof}
\begin{corollary}\label{cor2}
	For a Jordan matrix $A$ with $\ker A\neq \ker A^2$, 
	$\Sc(A,A^*)$  is an SI semigroup if and only if A  is a partial isometry.
\end{corollary}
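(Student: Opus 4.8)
The plan is to prove the two directions of the equivalence. The forward direction ($\Sc(A,A^*)$ SI $\Rightarrow$ $A$ a partial isometry) follows immediately from the general necessary condition in Theorem \ref{thm 2.1}. Since $A$ is a Jordan matrix with at least one block larger than $1\times 1$ (the hypothesis $\ker A \neq \ker A^2$ forces nonselfadjointness, as a diagonal matrix would satisfy $\ker A = \ker A^2$), $A$ is nonselfadjoint, so Theorem \ref{thm 2.1} applies: either $\ker A = \ker A^2$ or $A$ is a partial isometry. The hypothesis $\ker A \neq \ker A^2$ rules out the first alternative, so $A$ must be a partial isometry. This direction is essentially free once one checks that $\ker A \neq \ker A^2$ guarantees nonselfadjointness so that Theorem \ref{thm 2.1} is applicable.

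For the reverse direction ($A$ a partial isometry $\Rightarrow$ $\Sc(A,A^*)$ SI), the idea is to reduce to the power partial isometry situation. First I would invoke Corollary \ref{cor}: a partially isometric Jordan matrix $A$ is automatically a power partial isometry. Then I would apply the already-cited result that if $T$ is a power partial isometry, then $\Sc(T,T^*)$ is SI (this is \cite[Corollary 1.15]{PW21}, combined via Popov--Radjavi as discussed in the text). Chaining these two facts gives that $\Sc(A,A^*)$ is SI. So the reverse direction reduces entirely to the two imported results, and requires no new computation.

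The main (and really the only) subtlety I anticipate is in the forward direction: verifying that the hypothesis $\ker A \neq \ker A^2$ indeed implies $A$ is nonselfadjoint so that Theorem \ref{thm 2.1} can be invoked. A Jordan matrix is selfadjoint exactly when it is diagonal (all blocks $1\times 1$ with real $\lambda$), and in that case $\ker A = \ker A^2$ trivially; hence $\ker A \neq \ker A^2$ forces the presence of a genuine $\lambda$-block of size $\geq 2$, making $A$ nonselfadjoint. Once this is in hand, both directions are short invocations of earlier results, and I would simply write the proof as: the forward direction is Theorem \ref{thm 2.1} together with the hypothesis excluding the equal-kernel case, and the reverse direction is Corollary \ref{cor} followed by \cite[Corollary 1.15]{PW21}.
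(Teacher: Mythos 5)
Your proof is correct and follows essentially the same route as the paper: the forward direction is Theorem \ref{thm 2.1} with the hypothesis $\ker A \neq \ker A^2$ excluding the equal-kernel alternative, and the converse is Corollary \ref{cor} followed by the power-partial-isometry result of \cite{PW21}. Your explicit check that $\ker A \neq \ker A^2$ forces nonselfadjointness (so that Theorem \ref{thm 2.1} applies) is a detail the paper leaves implicit, and it is a worthwhile addition rather than a departure.
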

\begin{proof}
	Suppose $\Sc(A,A^*)$  is an SI semigroup. Since $\ker A\neq \ker A^2$, by Theorem \ref{thm 2.1}, $A$ is a partial isometry.
	Conversely, if $A$ is a partial isometry, then by Corollary~\ref{cor}, $A$ is a power partial isometry. Therefore, as proved in \cite[Remark 2.4]{PW21}, $\Sc(A,A^*)$  is an SI semigroup.
\end{proof}

We next consider the Jordan matrix case when $\ker A= \ker A^2$. In this case, $A$ may or may not be invertible. Suppose $A$ is not invertible and $\ker A= \ker A^2$. Noninvertibility implies at least one Jordan block has zero eigenvalue. Then the size of a Jordan block corresponding to the zero eigenvalue of $A$ must be one because the presence of a Jordan block $J_m(0)$ (where $m\geq 2$) violates the condition $\ker A= \ker A^2$. Indeed, any Jordan matrix $A$ with the Jordan block $J_m(0)$ (where $m\geq 2$) must have a column $Ae_i$ such that $Ae_i=e_{i-1}$ and $Ae_{i-1}=0$ for some $i\geq 2$. Therefore, $e_i\in \ker A^2$ but $e_i \notin \ker A$. Therefore, by rearranging the Jordan blocks corresponding to zero and nonzero eigenvalues together, $A$ must be unitarily equivalent to $A_1\oplus 0$, where $A_1$ is an invertible Jordan matrix and $0$ is a matrix of size at least one. Then by a straightforward argument one can prove that $\Sc(A,A^*)$  is an SI semigroup if and only if $\Sc(A_1,A_1^*)$ is an SI semigroup. Based on these observations, our SI investigation for the case when the Jordan matrix $A$ is not invertible reduces to the SI investigation for the invertible corner of the Jordan matrix $A$. 
Therefore, when $\ker A= \ker A^2$, we will consider only the invertible Jordan matrices to study the SI property of $\Sc(A,A^*)$, starting with Proposition \ref{prop3}. But first some preliminaries.

\begin{proposition}\label{prop2}
	Let $A=A_1\oplus A_2$ be a block diagonal matrix. If $\Sc(A,A^*)$  is an SI semigroup, then $\Sc(A_i,A_i^*)$ is an SI semigroup for each $i=1,2$. 
\end{proposition}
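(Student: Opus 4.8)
The plan is to reduce everything to the word-equation reformulation of the SI property recorded in \cite[Lemma 1.9]{PW21}, namely that a singly generated selfadjoint semigroup $\Sc(S,S^*)$ is SI if and only if every word $\mathcal W$ in $S,S^*$ admits a factorization $\mathcal W^* = X\mathcal W Y$ with $X,Y \in \Sc(S,S^*)\cup\{I\}$. This converts the claim from a statement about all ideals into a statement about individual words, which is exactly the level at which the block structure behaves transparently.

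First I would record the elementary structural fact that drives the proof: since $A = A_1\oplus A_2$ is block diagonal, so is $A^* = A_1^*\oplus A_2^*$, and products of block diagonal matrices are computed blockwise. Consequently every word $\mathcal W = \mathcal W(A,A^*)$ splits as $\mathcal W = \mathcal W_1 \oplus \mathcal W_2$, where $\mathcal W_i := \mathcal W(A_i,A_i^*)$ denotes the very same word evaluated in the $i$-th block. In particular $\Sc(A,A^*) = \{\,\mathcal W_1\oplus \mathcal W_2 : \mathcal W \text{ a word}\,\}$, and projecting onto the $i$-th block sends words in $A,A^*$ onto words in $A_i,A_i^*$.

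The key step is then to fix $i\in\{1,2\}$ and an arbitrary word $\mathcal W_i$ in $A_i,A_i^*$, lift it to the corresponding word $\mathcal W = \mathcal W(A,A^*)$, and invoke that $\Sc(A,A^*)$ is SI to write $\mathcal W^* = X\mathcal W Y$ for some $X,Y\in\Sc(A,A^*)\cup\{I\}$. Writing $X = X_1\oplus X_2$ and $Y = Y_1\oplus Y_2$ and reading off the $i$-th diagonal block of this identity (using $\mathcal W^* = \mathcal W_1^*\oplus\mathcal W_2^*$ from the previous paragraph) yields $\mathcal W_i^* = X_i\mathcal W_i Y_i$.

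The one point that requires care -- and the closest thing to an obstacle here -- is confirming that the block components $X_i,Y_i$ are admissible multipliers, i.e. that they lie in $\Sc(A_i,A_i^*)\cup\{I\}$ and not in some larger set. This is settled by two cases: if $X\in\Sc(A,A^*)$ then $X$ is a word in $A,A^*$, so $X_i$ is the corresponding word in $A_i,A_i^*$ and hence lies in $\Sc(A_i,A_i^*)$; and if $X=I$ then $X_i$ is simply the identity on the $i$-th block. Either way $X_i\in\Sc(A_i,A_i^*)\cup\{I\}$, and identically for $Y_i$. Thus $\mathcal W_i^* = X_i\mathcal W_i Y_i$ is a valid factorization, so by the criterion of \cite[Lemma 1.9]{PW21} the semigroup $\Sc(A_i,A_i^*)$ is SI. Since $i\in\{1,2\}$ was arbitrary, this proves the proposition.
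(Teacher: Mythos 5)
Your proposal is correct and follows essentially the same route as the paper's proof: both reduce to the word-equation criterion of \cite[Lemma 1.9]{PW21}, lift an arbitrary word in $A_i,A_i^*$ to the corresponding word in $A,A^*$ (the paper phrases this as choosing $T_2$ "in that same form"), apply the SI hypothesis to get $\mathcal W^* = X\mathcal W Y$, and read off the $i$-th diagonal block after noting $X,Y$ split blockwise with components in $\Sc(A_i,A_i^*)\cup\{I_i\}$. Your explicit verification that the block components are admissible multipliers is exactly the point the paper also makes.
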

\begin{proof}
	Suppose $\Sc(A,A^*)$  is an SI semigroup.
Multiplication in the semigroup $\Sc(A, A^*)$ is defined in \cite[Section 4, first paragraph]{PW21} as componentwise products.
For proving that $\Sc(A_1,A_1^*)$ forms an SI semigroup, it suffices to show that $(T_1)_{\Sc(A_1,A_1^*)}$ is a selfadjoint ideal for each $T_1 \in \Sc(A_1,A_1^*)$ 
\cite[Lemma 1.9(i)$\Leftrightarrow$(ii)]{PW21}.  For $T_1 \in \Sc(A_1,A_1^*)$ 
(i.e., any word in $A_1, A^*_1$), choose $T_2 \in \Sc(A_2,A_2^*)$ in that same form, so that $T:=T_1\oplus T_2\in \Sc(A,A^*)$ (for instance, if $T_1=A_1^2A_1^*A_1^3$ then take $T_2=A_2^2A_2^*A_2^3$). Since $\Sc(A,A^*)$ is an SI semigroup, one has $T^*=XTY$ for some $X,Y\in\Sc(A,A^*)\cup\{I\}$.  Using the explicit forms for $X$ and $Y$, i.e., $X = X_1 \oplus X_2$ and $Y = Y_1 \oplus Y_2$ for some $X_1, Y_1 \in \Sc(A_1, A_1^*) \cup \{I_1\}$ and for some $X_2, Y_2 \in \Sc(A_2, A^*_2) \cup \{I_2\}$, where $I = I_1 \oplus I_2$, we rewrite the matrix equation $T^*=XTY$ in block diagonal form as:
	$$T_1^* \oplus T^*_2 = (T_1\oplus T_2)^*=(X_1T_1Y_1)\oplus(X_2T_2Y_2).$$
	Therefore, from the above display one obtains:
	$$T^*_1=X_1T_1Y_1\quad\text{for some}~X_1,Y_1\in \Sc(A_1,A_1^*)\cup\{I_1\}.$$
	This proves the selfadjointness of the ideal $(T_1)_{\Sc(A_1,A_1^*)}$. Since $T_1$ was chosen arbitrarily, $\Sc(A_1,A_1^*)$ is an SI semigroup. Likewise $\Sc(A_2,A_2^*)$ is also an SI semigroup.
\end{proof}

Interestingly the converse of Proposition \ref{prop2} can fail, i.e., if both $\Sc(A_1,A_1^*)$, $\Sc(A_2,A_2^*)$ are SI semigroups, then $\Sc((A_1\oplus A_2),(A_1\oplus A_2)^*)$ may not be an SI semigroup. For instance, 
\begin{example}
	Consider $A_1=\begin{bmatrix}
	0& 1 & 0\\
	0 & 0 & 1\\
	0 & 0 & 0
	\end{bmatrix}$ and $A_2=\begin{bmatrix}
	2 & 0\\
	0 & 2
	\end{bmatrix}$. Since $A_1$ is a power partial isometry, $\Sc(A_1,A_1^*)$ is an SI semigroup  \cite[Corollary 1.15]{PW21} and $A_2$ is a selfadjoint matrix, therefore $\Sc(A_2,A_2^*)$ is automatically  an SI semigroup \cite[Remark 1.13]{PW21}. But for $A := A_1\oplus A_2$, $\Sc(A,A^*)$ is not an SI semigroup. Indeed, suppose otherwise that $\Sc(A,A^*)$ forms an SI semigroup. Then $(A)_{\Sc(A,A^*)}$ is a selfadjoint ideal, in particular. Therefore, $A^*=XAY$ for some $X,Y\in\Sc(A,A^*)\cup\{I\}$ with not both $X$ and $Y$ equal to the identity matrix $I = I_1 \oplus I_2$ as $A$ is not selfadjoint. Suppose $X \neq I$. Since $X$ is a word in $A, A^*$ so $X $ is a direct sum of that same word in $A_1, A_1^*$ and $A_2, A_2^*$. Let $X = X_1 \oplus X_2$ and $Y = Y_1 \oplus Y_2$; and rewriting the matrix equation $A^*=XAY$ in block diagonal form one obtains:
	$$A^*_1\oplus A^*_2=X_1A_1Y_1\oplus X_2A_2Y_2$$
	for some $X_i,Y_i\in \Sc(A_i,A_i^*)\cup \{I_i\}$.  Since $X\neq I$, then $X$ is a word in $\Sc(A, A^*)$ which is a direct sum of that same word in $A_i, A^*_i$ for $i = 1,2$, and  $X_2 \neq I_2$ as $X_2$ is that same word in $A_2, A_2^*$. So from the above display one further obtains:
	$$A_2^*=X_2A_2Y_2$$
	where $X_2,Y_2\in \Sc(A_2,A_2^*)\cup \{I_2\}$ and not both $X_2, Y_2$ equal to the identity $I_2$. But also $\Sc(A_2,A^*_2)=\{A^k_2: k\geq 1\}$, since $A_2$ is selfadjoint \cite[Remark 1.13]{PW21}. Hence, $X_2A_2Y_2=A^k_2$ for some $k\geq 2$ and then from above display one obtains $A_2 = A_2^*=A^k_2$ where $k\geq 2$, which is not possible for our choice of $A_2$. Hence $\Sc(A,A^*)$ is not an SI semigroup.
\end{example}

Continuing our strategy discussed in the paragraph preceding Proposition \ref{prop2}, the invertible case, 
we have
\begin{proposition}\label{prop3}
	For $A\in M_n(\mathbb{C})$ a nonselfadjoint invertible matrix, if $\Sc(A,A^*)$  is an SI semigroup 
then $|\det A|=1$. 
\end{proposition}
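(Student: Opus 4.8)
The plan is to push the identity $A^{*}=XAY$, which the SI property forces, through the multiplicative homomorphism $\det\colon M_n(\mathbb C)\to\mathbb C$ and exploit the rigid form that determinants of words in $A$ and $A^{*}$ must take. First I would invoke the SI hypothesis on the principal ideal generated by $A$: since $(A)_{\Sc(A,A^{*})}$ is selfadjoint, $A^{*}\in (A)_{\Sc(A,A^{*})}$, so there exist $X,Y\in\Sc(A,A^{*})\cup\{I\}$ with $A^{*}=XAY$, and because $A$ is nonselfadjoint, $X$ and $Y$ are not both the identity. Setting $d:=\det A$, which is nonzero by invertibility, and recalling $\det A^{*}=\overline{d}$, the key bookkeeping observation is that every word $W$ in $A,A^{*}$ has $\det W=d^{\,a}\overline{d}^{\,b}$, where $a$ (resp.\ $b$) counts the occurrences of $A$ (resp.\ $A^{*}$) in $W$, with the convention that the empty word $W=I$ gives $a=b=0$; this holds because the determinant is multiplicative and each letter contributes either $d$ or $\overline{d}$, regardless of any cancellation among the matrix products.

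Taking determinants in $A^{*}=XAY$ and collecting exponents then yields
\[
\overline{d}=d^{\,p}\,\overline{d}^{\,q},
\]
where $p\ge 1$ is the total $A$-exponent (the single $A$ together with those coming from $X$ and $Y$) and $q\ge 0$ is the total $A^{*}$-exponent. Passing to moduli and using $|\overline{d}|=|d|$ gives $|d|=|d|^{\,p+q}$, hence, after dividing by $|d|\neq 0$,
\[
|d|^{\,p+q-1}=1 .
\]
Since $|d|$ is a positive real number, this forces $|d|=1$ as soon as the integer exponent $p+q-1$ is strictly positive, i.e.\ as soon as $p+q\ge 2$.

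The only delicate point, and where I expect the argument to need care, is verifying $p+q\ge 2$: this is exactly where nonselfadjointness enters. The total length $p+q$ equals $1$ plus the combined word-lengths of $X$ and $Y$, and since $X,Y$ are not both $I$, at least one contributes a positive length, giving $p+q\ge 2$ and therefore $|\det A|=1$. The remaining work is purely the routine bookkeeping that the exponents are genuine nonnegative integers and that matrix cancellation does not affect the determinant count, which I would state explicitly rather than grind through. (As an alternative route one could instead apply Theorem~\ref{inverse}, which makes $\Sc(A,A^{*})$ a group containing $I$, so that $A^{-1}\in\Sc(A,A^{*})$ is itself a nonempty word; comparing $\det(A^{-1})=1/d$ with its word-expression $d^{\,a}\overline{d}^{\,b}$ gives $|d|^{\,a+b+1}=1$ and the same conclusion, but the direct computation above avoids even needing simplicity.)
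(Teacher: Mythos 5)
Your proposal is correct and follows essentially the same route as the paper's proof: apply the determinant to $A^{*}=XAY$, count the exponents of $\det A$ and $\overline{\det A}$ contributed by the letters of the word, use nonselfadjointness (not both $X,Y$ equal to $I$) to force the total exponent to be at least $2$, and pass to moduli using $\det A\neq 0$ to conclude $|\det A|=1$. The paper phrases the exponent bound as ``if $m=0$ then $n\geq 2$, else $A^{*}=A$,'' which is the same observation as your $p+q\geq 2$ argument, so there is nothing to add.
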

\begin{proof}
	Suppose $\Sc(A,A^*)$  is an SI semigroup. Then $(A)_{\Sc(A,A^*)}$ is a selfadjoint ideal. Therefore, $A^*=XAY$ for some $X,Y\in \Sc(A,A^*)\cup \{I\}$, where $X, Y$ cannot both be the identity because $A$ is nonselfadjoint. Applying the determinant one obtains 
$$0 \ne \overline{\det A}=(\overline{\det A})^m (\det A)^n$$ for some $m\geq 0, n\geq 1$. For $m=0$ (this happens when the words $X$ and $Y$ have no $A^*$ term) one must have $n\geq 2$ otherwise $A^* = A$ contradicting nonselfadjointness of $A$. Then one obtains 
	$$|\det A|=|\det A|^{m+n}$$ 
	where $m+n\geq 2$. And since $\det A\neq 0$, one obtains $|\det A|=1$.  
\end{proof}

Hence we have

\begin{corollary}\label{cor3}
	For $A=\oplus_{i=1}^k J_{m_i}(\lambda_i)$ a nonselfadjoint invertible Jordan matrix,
	\begin{center}
	$\Sc(A,A^*)$ is an SI semigroup if and only if $A$ is a unitary matrix 
 (equivalently, all $m_i =1$ with $|\lambda_i| = 1$).
	\end{center}
\end{corollary}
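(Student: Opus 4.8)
The plan is to prove the two implications separately, leaning on the block-reduction Proposition~\ref{prop2} together with the invertible-case tools (Theorem~\ref{inverse}, Proposition~\ref{prop3}, Theorem~\ref{theorem1}) already in hand. I would first record the elementary equivalence underlying the parenthetical claim: a Jordan matrix $\oplus_{i=1}^k J_{m_i}(\lambda_i)$ is unitary if and only if each block is unitary, and a single block $J_m(\lambda)$ is unitary precisely when $m=1$ and $|\lambda|=1$ (for $m\ge 2$ the block is not even normal, as comparing $J_mJ_m^*$ with $J_m^*J_m$ shows). For the easy ($\Leftarrow$) direction, assume $A$ is unitary, so $A^*=A^{-1}$ and hence $I=AA^*\in\Sc(A,A^*)$; the semigroup is then closed under inverses and is the cyclic group generated by $A$. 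A group is simple (any ideal containing $g$ also contains $g^{-1}g=I$ and therefore all of $\Sc$), so $\Sc(A,A^*)$ is simple and thus SI by Theorem~\ref{inverse}.

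For the forward ($\Rightarrow$) direction, assume $\Sc(A,A^*)$ is SI. Applying Proposition~\ref{prop2} repeatedly, each $\Sc(J_{m_i}(\lambda_i),J_{m_i}(\lambda_i)^*)$ is SI, and invertibility of $A$ forces every $\lambda_i\ne 0$. I would next eliminate the large blocks. If some $m_i\ge 2$, then $J_{m_i}(\lambda_i)$ is a nonselfadjoint invertible matrix, so Proposition~\ref{prop3} gives $|\det J_{m_i}(\lambda_i)|=|\lambda_i|^{m_i}=1$, whence $|\lambda_i|=1$. But $|\lambda_i|\ge 1$ together with $m_i\ge 2$ places us exactly in the hypotheses of Theorem~\ref{theorem1}, which asserts that $\Sc(J_{m_i}(\lambda_i),J_{m_i}(\lambda_i)^*)$ is \emph{not} SI, a contradiction. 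Hence every $m_i=1$, and $A=\diag(\lambda_1,\dots,\lambda_k)$ is diagonal with all $\lambda_i\ne 0$.

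It remains to show each $|\lambda_i|=1$, and I expect this entrywise conclusion to be the main obstacle: Proposition~\ref{prop3} applied to $A$ yields only $|\det A|=\prod_i|\lambda_i|=1$, not the individual moduli, and the $1\times 1$ blocks with real $\lambda_i$ are selfadjoint, so Proposition~\ref{prop3} cannot be applied to them one at a time. I would resolve this by reading off the defining SI equation directly. Since $A$ is diagonal, hence normal, every word in $A,A^*$ commutes and equals $A^{s}(A^*)^{t}$, where $s,t$ count the occurrences of $A$ and $A^*$. The SI property gives $A^*=XAY$ for some $X,Y\in\Sc(A,A^*)\cup\{I\}$ with $X,Y$ not both $I$ (as $A\ne A^*$), so $A^*=A^{s}(A^*)^{t}$ with $s\ge 1$, $t\ge 0$, and total word length $s+t\ge 2$. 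Comparing $i$th diagonal entries gives $\bar\lambda_i=\lambda_i^{\,s}\bar\lambda_i^{\,t}$, and taking moduli yields $|\lambda_i|=|\lambda_i|^{\,s+t}$; since $s+t\ge 2$ and $|\lambda_i|>0$, this forces $|\lambda_i|=1$ for every $i$. Therefore $A$ is diagonal with unimodular entries, i.e.\ unitary, completing the proof.
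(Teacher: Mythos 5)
Your proof is correct, and its skeleton coincides with the paper's: both directions use the group/simplicity argument for unitary $A$, and the forward direction eliminates blocks with $m_i\ge 2$ exactly as the paper does, via Proposition~\ref{prop2} (block reduction), then Proposition~\ref{prop3} (forcing $|\lambda_i|=1$), then Theorem~\ref{theorem1} (contradiction). Where you diverge is the residual step, which you correctly identified as the remaining obstacle: once all blocks are $1\times 1$, one must still show the diagonal matrix $A=\diag(\lambda_1,\dots,\lambda_k)$ has unimodular entries. The paper dispatches this by citing its predecessor's characterization of SI semigroups with normal generators (\cite[Theorem 2.1]{PW21}: for $A$ normal and nonselfadjoint, SI forces $A$ unitary), whereas you give a self-contained argument: since $A$ is diagonal, every word collapses to $A^s(A^*)^t$, so the SI equation $A^*=XAY$ reads entrywise as $\bar\lambda_i=\lambda_i^{\,s}\bar\lambda_i^{\,t}$ with $s+t\ge 2$, and taking moduli gives $|\lambda_i|=|\lambda_i|^{s+t}$, hence $|\lambda_i|=1$. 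Your version is more elementary and keeps the corollary independent of the external normal-operator theorem, at the cost of a short computation; the paper's version is shorter on the page but imports machinery (the full normal characterization) that is far stronger than what the diagonal case actually requires. Both are complete; note only that your appeal to Theorem~\ref{inverse} in the $(\Leftarrow)$ direction is slightly indirect, since simplicity implies SI vacuously without needing that theorem's hypotheses, which is how the paper phrases it.
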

\begin{proof}
If $A=\oplus_{i=1}^k J_{m_i}(\lambda_i)$ is a unitary matrix, then $\Sc(A, A^*)$ is a group 
hence simple and so trivially SI (see paragraph preceding Theorem \ref{inverse} for why group implies simple).
Conversely, suppose $\Sc(A,A^*)$ is an SI semigroup. We will first show that $m_i = 1$ for all $1 \leq i \leq k$. Indeed, since $A=\oplus_{i=1}^k J_{m_i}(\lambda_i)$ and $\Sc(A,A^*)$ is an SI semigroup, it follows from Proposition \ref{prop2} that $\Sc(J_{m_i}(\lambda_i),J_{m_i}^*(\lambda_i))$ is an SI semigroup for each $m_i$.
Suppose there exists $i \geq 1$ for which $m_i \geq 2$. Then $J_{m_i}(\lambda_i)$ is a nonselfadjoint invertible matrix. 
Nonselfadjointness is clear. Invertibility holds because $\det J_{m_i}(\lambda_i) \ne 0$ which follows from the invertibility of $A$ via $0 \ne \det A = \Pi_{j=1}^k \det J_{m_j}(\lambda_j)$.
Since $\Sc(J_{m_i}(\lambda_i),J_{m_i}^*(\lambda_i))$ is an SI semigroup, by Proposition \ref{prop3},  $|\det(J_{m_i}(\lambda_i))|=1$ and so $|\lambda_i|=1$.
But by Theorem \ref{theorem1}, $\Sc(J_{m_i}(\lambda_i),J_{m_i}^*(\lambda_i))$ is not an SI semigroup whenever $|\lambda_i| = 1$, contradicting $\Sc(J_{m_i}(\lambda_i),J_{m_i}^*(\lambda_i))$ possessing the SI property. Therefore, for each $i \geq 1$, $m_i = 1$. This implies that $A$ is a diagonal matrix with diagonal Jordan blocks of size one each equal to its eigenvalue $\lambda_i$, and hence $A$ is a nonselfadjoint normal matrix which is invertible. Additionally, as $\Sc(A, A^*)$ is also SI, so it follows from \cite[Theorem 2.1]{PW21} that $A$ is unitary.  	
	\end{proof}

The conclusion in Corollary \ref{cor3} may not hold if we drop the hypothesis that $A$ is a Jordan matrix as seen from the following example of a nonselfadjoint invertible \textit{nonunitary} matrix with a selfadjoint generated SI semigroup.
\begin{example}
	For $A=\begin{bmatrix}
	0 & 1/2\\
	2& 0
	\end{bmatrix}$ a nonselfadjoint invertible matrix (which is not a Jordan matrix), $\Sc(A,A^*)$ is an SI semigroup, but  $A$ is not a unitary matrix. 
Indeed since $(A^*A)(AA^*) = I = (AA^*)(A^*A)$, 
one has that $A^*$ and $A$ have their inverses in $\Sc(A,A^*)$ and hence all its elements (all words in $A,A^*$) have their inverses in $\Sc(A,A^*)$ which makes it a group, hence simple (see first line of previous proof), and hence SI.
\end{example}

We can now characterize SI semigroups $\Sc(A, A^*)$ generated by a nonselfadjoint Jordan matrix $A$.
This can be viewed as:  among nonselfadjoint Jordan matrices, an alternate SI characterization of power partial isometries, 
or equivalently by Corollary \ref{cor}, of partial isometries (because for Jordan matrices, they are the same class).

In \cite[Remark 1.13(i)-(ii) and Theorem 2.1]{PW21} we characterized SI semigroups $\Sc(A, A^*)$ for $A$ normal. Here we do so for non-normal Jordan matrices $A$. 

\begin{theorem}\label{jordan}(A characterization of SI semigroups generated by Jordan matrices.)\\
For non-normal Jordan matrices $A$,
\begin{center}
$\Sc(A, A^*)$ is an SI semigroup if and only if $A$ is a partial isometry.
\end{center}
\end{theorem}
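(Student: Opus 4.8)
The plan is to prove the two implications separately, with essentially all of the work concentrated in the forward direction. For the backward implication, suppose $A$ is a partial isometry. Because $A$ is a Jordan matrix, Corollary \ref{cor} promotes this to $A$ being a power partial isometry, whence $\Sc(A,A^*)$ is SI by the power-partial-isometry criterion \cite[Remark 2.4]{PW21}. This requires no new computation.

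For the forward implication, assume $\Sc(A,A^*)$ is SI. Since a non-normal Jordan matrix is in particular nonselfadjoint, Theorem \ref{thm 2.1} applies and delivers the dichotomy that either $\ker A = \ker A^2$ or $A$ is a partial isometry. If the latter holds we are done, so the whole task reduces to ruling out the alternative $\ker A = \ker A^2$ under the standing hypotheses. First I would invoke the reduction described just before Proposition \ref{prop2}: the equality $\ker A = \ker A^2$ forces every zero-eigenvalue Jordan block to have size one, so after a reordering $A$ is unitarily equivalent to $A_1 \oplus 0$ with $A_1$ an invertible Jordan matrix (the zero summand absent exactly when $A$ is itself invertible). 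By Proposition \ref{prop2}, the SI property of $\Sc(A,A^*)$ passes to $\Sc(A_1,A_1^*)$.

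The crux is then a short non-normality bookkeeping argument. Since a block diagonal matrix is normal precisely when each block is, and the summand $0$ is normal, the non-normality of $A = A_1 \oplus 0$ forces $A_1$ to be non-normal, hence $A_1$ carries a Jordan block of size at least two. But $A_1$ is a nonselfadjoint invertible Jordan matrix with $\Sc(A_1,A_1^*)$ SI, so Corollary \ref{cor3} forces $A_1$ to be unitary, i.e.\ all its blocks have size one, making $A_1$ diagonal and therefore normal---a contradiction. Thus $\ker A = \ker A^2$ is impossible, and the dichotomy of Theorem \ref{thm 2.1} leaves only the conclusion that $A$ is a partial isometry.

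The hard part will not be this assembly, which is brief once the ingredients are collected, but rather that it quietly rests on the genuinely technical engine behind Corollary \ref{cor3}: Theorem \ref{theorem1} (the failure of SI for $\Sc(J_N(\lambda),J_N^*(\lambda))$ when $N\ge 2$ and $|\lambda|=1$) together with the determinant obstruction of Proposition \ref{prop3}. The one point in the write-up that demands care is the faithfulness of the reduction $A\cong A_1\oplus 0$: one must confirm both that $\ker A=\ker A^2$ really eliminates nilpotent blocks of size $\ge 2$ and that non-normality and the SI property each transfer to the invertible corner, so that Corollary \ref{cor3} is being applied to a matrix to which it legitimately applies.
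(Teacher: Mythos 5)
Your proposal is correct and follows essentially the same route as the paper's proof: the dichotomy of Theorem \ref{thm 2.1}, the reduction of the case $\ker A=\ker A^2$ to an invertible Jordan corner $A_1$ (via unitary equivalence $A\cong A_1\oplus 0$ and SI inheritance), Corollary \ref{cor3} forcing $A_1$ unitary, and the contradiction with non-normality, plus Corollary \ref{cor} for the converse. The only differences are cosmetic — you unify the invertible and noninvertible cases and invoke Proposition \ref{prop2} where the paper cites its earlier ``straightforward argument,'' and you place the contradiction at the level of $A_1$ rather than of $A\cong U\oplus 0$.
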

\begin{proof}
Suppose $\Sc(A, A^*)$ is an SI semigroup. Since $A$ is a nonselfadjoint matrix and $\Sc(A, A^*)$ is SI, by Theorem \ref{thm 2.1}, one has either $\ker A = \ker A^2$ or $A$ is a partial isometry. So if $\ker A \neq \ker A^2$, then $A$ is a partial isometry and then by Corollary \ref{cor}, $A$ is a power partial isometry. 

Next we show that under the SI assumption of $\Sc(A, A^*)$ for a non-normal Jordan matrix $A$, $\ker A \neq \ker A^2$. Indeed, suppose $\ker A = \ker A^2$. We consider the invertible and noninvertible cases separately. If $A$ is not invertible, then the discussion in the paragraph after Corollary \ref{cor2} proves that $A$ is unitarily equivalent to $A_1\oplus 0$, where $A_1$ is an invertible Jordan matrix and $0$ is a matrix of size at least one. Since $\Sc(A,A^*)$  is an SI semigroup so $\Sc(A_1,A_1^*)$ is an SI semigroup (as mentioned in that paragraph, follows by a straightforward argument). Since $A_1$ is a  nonselfadjoint invertible Jordan matrix and the SI property of $\Sc(A_1,A_1^*)$ implies that $A_1$ is a unitary matrix by Corollary \ref{cor3}. Therefore $A$ is unitarily equivalent to $U \oplus 0$ which is a contradiction to the non-normality of $A$. In the case of invertible $A$, it follows again from Corollary \ref{cor3} that $A = U$ where $U$ is a unitary matrix, hence normal which again is a contradiction. Therefore in both the invertible and noninvertible case, the SI property of $\Sc(A, A^*)$ implies that $\ker A \neq \ker A^2$. So, $A$ must be a partial isometry.

Conversely, if $A$ is a partial isometry, then by Corollary \ref{cor}, $A$ is a power partial isometry and then $\Sc(A, A^*)$ is an SI semigroup by \cite[Corollary 1.15]{PW21}. 
\end{proof}

We have now established all the results that are required to obtain a  characterization for the simplicity of semigroups $\Sc(A,A^*)$ generated by $A$ from the class of nonselfadjoint Jordan matrices: 	 
\begin{corollary}\label{simplicity cor}
			For $T \in M_n(\mathbb{C})$ a nonselfadjoint Jordan matrix, one has $$\Sc(A,A^*)~ \text{is simple if and only if $A$ is unitarily equivalent to}~ U\oplus 0,$$
			where $U$ is unitary matrix and $0$ is a zero matrix (the second summand maybe absent).
		\end{corollary}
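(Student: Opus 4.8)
The plan is to establish the two implications separately, leaning on the fact that a simple semigroup is automatically SI (its only ideal is the whole semigroup, which is selfadjoint by Definition \ref{D6}) together with the structural Proposition \ref{prop}.

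For the implication assuming $A$ is unitarily equivalent to $U\oplus 0$ with $U$ unitary, I would first note that simplicity is a unitary invariant---conjugation by a unitary is a semigroup isomorphism carrying ideals bijectively to ideals---so we may take $A=U\oplus 0$ outright. Every word in $A,A^*$ is then block diagonal of the form $W\oplus 0$ with $W$ a word in $U,U^*$, hence unitary. In particular $A^*A=I\oplus 0\in\Sc(A,A^*)$ serves as an identity and each $W\oplus 0$ has its inverse $W^*\oplus 0$ in the selfadjoint semigroup $\Sc(A,A^*)$; thus $\Sc(A,A^*)$ is a group and therefore simple (as recalled in the paragraph preceding Theorem \ref{inverse}). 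The subcase where the zero summand is absent, $A=U$ unitary, is identical.

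For the converse I would assume $\Sc(A,A^*)$ simple, hence SI, and first deduce that $A$ is a partial isometry. When $A$ is non-normal this is precisely Theorem \ref{jordan}. When $A$ is normal (so a diagonal Jordan matrix), normality gives $\ker A=\ker A^2$, so I would pass to the invertible corner $A_1$ as in the paragraph following Corollary \ref{cor2}; since $A$ has a non-real (hence nonzero) eigenvalue, $A_1$ is a nonselfadjoint invertible Jordan matrix whose generated semigroup is SI, and Corollary \ref{cor3} forces $A_1$ to be unitary, whence $A=A_1\oplus 0$ is a partial isometry. Proposition \ref{prop} then gives $A\cong U\oplus B$ with $U$ a diagonal unitary and $B$ a direct sum of shifts, with respect to which every element of $\Sc(A,A^*)$ is block diagonal, $W_U\oplus W_B$.

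The decisive step, which I expect to be the main obstacle, is to eliminate any genuine shift (size $\ge 2$) from $B$. Were such a shift present, $B$ would be a nonzero nilpotent, say $B^p=0$ with $p\ge 2$ and $B\ne 0$, so $A^p=U^p\oplus 0$. Each element $XA^pY$ of the principal ideal $(A^p)_{\Sc(A,A^*)}$ would then have $B$-block $X_B\,0\,Y_B=0$, whereas $A$ itself has $B$-block $B\ne 0$; hence $A\notin(A^p)_{\Sc(A,A^*)}$, producing a proper ideal and contradicting simplicity. Consequently $B$ consists only of size-one (zero) shifts, i.e.\ $B$ is a zero matrix or absent, so $A\cong U\oplus 0$. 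The points requiring care are the preservation of the block-diagonal structure under arbitrary words (so that the ideal computation is valid) and the routing of the normal case through Corollary \ref{cor3}; but the genuine heart of the argument is this shift-elimination via the ideal generated by $A^p$.
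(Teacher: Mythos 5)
Your proof is correct, but the implication ``simple $\Rightarrow A\cong U\oplus 0$'' follows a genuinely different route from the paper's (the group argument for the reverse implication is the same in both). The paper never invokes Theorem \ref{jordan} or Proposition \ref{prop}: it first deduces $\ker A=\ker A^2$ directly from simplicity via a rank obstruction (if $\rank A^2<\rank A$, then by Proposition \ref{prop2.2} every $XA^2Y$ has rank at most $\rank A^2$, so $A\notin(A^2)_{\Sc(A,A^*)}$ and this principal ideal is proper), then uses the Jordan structure under $\ker A=\ker A^2$ to write $A\cong A_1\oplus 0$ with $A_1$ an invertible Jordan matrix, transfers simplicity to $\Sc(A_1,A_1^*)$, and concludes from Theorem \ref{inverse} together with Corollary \ref{cor3} that $A_1$ is unitary --- a single uniform argument with no normal/non-normal split. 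You instead route through the SI characterization: simple $\Rightarrow$ SI $\Rightarrow$ $A$ is a partial isometry (Theorem \ref{jordan} in the non-normal case, Corollary \ref{cor3} via the invertible corner in the normal case), then Proposition \ref{prop} gives $A\cong U\oplus B$, and a second use of simplicity eliminates any genuine shift block: once $B^p=0$ with $B\neq 0$, every element of $(A^p)_{\Sc(A,A^*)}$ has vanishing $B$-block, so $A$ lies outside this proper ideal. Both proofs detect the same underlying obstruction --- a nilpotent piece forces a proper principal ideal generated by a power of $A$ --- but the paper places it at the outset (rank drop) while you place it at the end (annihilation of the $B$-block). Your version has the merit of making explicit why SI alone cannot suffice (nontrivial shifts generate SI but non-simple semigroups) and of exhibiting the shift blocks as the exact culprits; the paper's is more economical, needing neither the partial-isometry machinery nor the case split that your reliance on Theorem \ref{jordan} (stated only for non-normal $A$) forces on the normal case.
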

	\begin{proof}
		Suppose $\Sc(A,A^*)$ is simple. Then $\ker A=\ker A^2$. Indeed, if $\ker A\neq\ker A^2$, then $\ker A \subsetneq \ker A^2$. This implies that $\dim \ker A < \dim \ker A^2$, so by the rank nullity theorem, $\rank A > \rank A^2$. We next show that $\rank A > \rank A^2$ implies that $A\notin (A^2)_{\Sc(A,A^*)}$. 
		Suppose $A\in(A^2)_{\Sc(A,A^*)}$. Then $A=XA^2Y$ for some $X,Y\in\Sc(A,A^*)\cup\{I\}$. 
		Using Proposition \ref{prop2.2} and the fact that  $\rank A > \rank A^2$, we obtain
		$$\rank A=\rank(XA^2Y)\leq \rank A^2 <\rank A,$$
		which is absurd.
	This implies nonsimplicity of $\Sc(A,A^*)$, contradicting the simplicity assumption on $\Sc(A, A^*)$. So $\ker A=\ker A^2$.		
We next consider the invertible and the noninvertible cases separately. If $A$ is invertible, then it follows from Corollary \ref{cor3} that $A$ is a unitary matrix (simple semigroups are automatically SI semigroups). If $A$ is not invertible, then the discussion in the paragraph after Corollary \ref{cor2} proves that $A$ is unitarily equivalent to $A_1\oplus 0$, where $A_1$ is an invertible Jordan matrix and $0$ is a matrix of size at least one. And a straightforward argument proves that $\Sc(A,A^*)$ is simple if and only if $\Sc(A_1,A_1^*)$ is simple. Therefore, when $A$ is not invertible, the simplicity of $\Sc(A,A^*)$ reduces to the simplicity of $\Sc(A_1,A^*_1)$ where $A_1$ is the invertible corner of the Jordan matrix $A$. Also, since $A_1$ is a nonselfadjoint invertible matrix, $\Sc(A_1,A^*_1)$ is  simple if and only if $\Sc(A_1,A^*_1)$ is SI  (follows from Theorem \ref{inverse}). Furthermore, by Corollary \ref{cor3}, $\Sc(A_1,A^*_1)$ being SI implies that $A_1$ is a unitary matrix. Therefore, if $\Sc(A,A^*)$ is simple, it follows that $A$ is unitarily equivalent to $U\oplus 0$.

Conversely, if $A$ is unitarily equivalent to $U\oplus 0$, then $\Sc(U\oplus 0, U^*\oplus 0)$ forms a group and so also $\Sc(A, A^*)$ forms a group, and hence $\Sc(A, A^*)$ is simple.
	\end{proof}

We end this section by providing a characterization, solely via its norm, of a partial isometry when $\Sc(A, A^*)$ is an SI semigroup generated by a nonselfadjoint $A$.
This is for more general matrices (not necessarily Jordan matrices) 
(Theorem \ref{p} below). To prove this, we require the concept of $s$-numbers (singular number sequence) of a matrix.
The $s$-numbers of a matrix $A\in M_n(\mathbb C)$ is defined as the $n$-tuple of eigenvalues of diagonalized $|A|:= (A^*A)^{1\slash 2}$ arranged in decreasing order. So for instance the first $s$-number,
$s_1(A) = ||A||$.  

In general, if $A$ is a partial isometry, then $||A|| =1$, but the converse need not be true. For instance, 
\begin{equation*}
A = \begin{bmatrix}
0 & 1/2\\
1 & 0
\end{bmatrix}.
\end{equation*}
has $||A|| =1$, but $A$ is not a partial isometry because $A^*A$ is not a projection.

But for SI semigroups $\Sc(A, A^*)$, we prove in Theorem \ref{p} that if $||A|| =1$, then $A$ must be a partial isometry. 
This result can fail  in infinite dimensions (see Example \ref{infinite} below).

Let $\{s_j(A)\}^n_{j=1}$ denote the $s$-numbers of $A\in M_n(\mathbb{C})$.

\begin{proposition}\label{prop2.1}
For $A\in M_n(\mathbb{C})$, $A$ is a partial isometry if and only if $s_j(A)\in \{0,1\}$ for all $1\leq j\leq n$.
\end{proposition}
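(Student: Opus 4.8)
The plan is to reduce the statement to the standard defining property of a partial isometry, namely that $A$ is a partial isometry if and only if $A^*A$ is an orthogonal projection (one of the equivalent characterizations used elsewhere in this paper, e.g. via $A = AA^*A$ and $A^*A$ being a projection, cf. \cite{Hal82}), and then to read off the singular-value condition from the spectral theorem applied to $A^*A$.

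First I would observe that $A^*A$ is selfadjoint and positive semidefinite, so by the spectral theorem it is unitarily diagonalizable with nonnegative real eigenvalues $\mu_1,\dots,\mu_n$. By the definition of the $s$-numbers given just above the proposition (the eigenvalues of the diagonalized $|A|=(A^*A)^{1/2}$), these eigenvalues are exactly $\mu_j=s_j(A)^2$. Next I would invoke the elementary fact that a selfadjoint matrix $P$ is an orthogonal projection if and only if $P^2=P$, together with the observation that a diagonalizable $P$ satisfies $P^2=P$ precisely when all its eigenvalues lie in $\{0,1\}$. Applying this to $P=A^*A$ gives that $A^*A$ is a projection if and only if $s_j(A)^2\in\{0,1\}$ for every $1\leq j\leq n$.

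Finally, since each $s_j(A)\geq 0$, the condition $s_j(A)^2\in\{0,1\}$ is equivalent to $s_j(A)\in\{0,1\}$, and this equivalence disposes of both implications simultaneously: $A$ is a partial isometry $\iff$ $A^*A$ is a projection $\iff$ $s_j(A)\in\{0,1\}$ for all $j$. There is no substantial obstacle in this argument; the only point requiring care is the correct translation, via the spectral theorem for the positive semidefinite matrix $A^*A$, between the eigenvalues of $A^*A$ and the singular values $s_j(A)$, and the use of positive semidefiniteness to ensure that passing to the square root is an honest equivalence rather than merely an implication.
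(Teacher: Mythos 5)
Your proof is correct and follows essentially the same route as the paper: both rest on the equivalence of $A$ being a partial isometry with $A^*A$ being a projection, plus the spectral theorem to identify the eigenvalues of $A^*A$ (equivalently of $|A|$) with the squares of the $s$-numbers. The only cosmetic difference is that in the converse direction the paper diagonalizes $|A|$ and notes $A^*A=|A|^2$, whereas you phrase everything as a single chain of equivalences through $A^*A$; the substance is identical.
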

\begin{proof}
	If $A$ is a partial isometry, then $A^*A$ is a projection, so the eigenvalues of $A^*A$ are in the set $\{0,1\}$, whose square roots are also then the $s$-numbers of $A$. Conversely, if $s_j(A)\in \{0,1\}$ for all $1\leq j\leq n$. Then $|A|$ has all its eigenvalues contained in the set $\{0,1\}$. Moreover, since $|A|\geq 0$, so $|A|$ is unitarily diagonalizable. Therefore there exists a unitary matrix $U$ and a diagonal matrix $D$ such that $|A|=UDU^*$, where $D$ has all its diagonal entries in the set $\{0,1\}$. Since $D$ is a projection, $|A|$ is also a projection and hence its square $A^*A$ is also a projection, or equivalently, $A$ is a partial isometry.
\end{proof}

We recall here the matrix version of a set of $s$-number inequalities in Gohberg and Kre\u{\i}n \cite{GK} which we need for the lemma following.

\cite[Corollary 4.1]{GK} \textit{For any two matrices $A, B\in M_n(\mathbb{C})$ one has 
\begin{equation*}\label{GK}
\displaystyle{\sum^k_{j=1}}s_j(AB) \leq \displaystyle{\sum^k_{j=1}}s_j(A)s_j(B), \qquad (k= 1, \cdots, n).
\end{equation*}
And as the authors indicate, the inequality can naturally be generalized to the case of $m$ matrices $A_1, \cdots, A_m$.}
\begin{equation}\label{GK'}
\displaystyle{\sum^k_{j=1}}s_j(A_1 \cdots A_m) \leq \displaystyle{\sum^k_{j=1}}s_j(A_1) \cdots s_j (A_m), \qquad (k= 1, \cdots, n).
\end{equation}

Nonzero partial isometries always have norm one, but the converse clearly does not hold.
However, in the SI environment, it does.

\begin{lemma}\label{L}
For $A\in M_n(\mathbb{C})$ a nonselfadjoint matrix with $\Sc(A,A^*)$ an SI semigroup, 
if $||A||=1$ then $A$ is a partial isometry.
\end{lemma}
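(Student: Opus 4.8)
The plan is to argue entirely at the level of singular numbers, using the generalized Gohberg--Kre\u{\i}n inequality (\ref{GK'}) together with Proposition \ref{prop2.1}; this avoids having to split into the two cases of the dichotomy in Theorem \ref{thm 2.1}. First, since $\Sc(A,A^*)$ is SI, the principal ideal $(A)_{\Sc(A,A^*)}$ is selfadjoint, so $A^* = XAY$ for some $X,Y \in \Sc(A,A^*)\cup\{I\}$. Because $A$ is nonselfadjoint we have $A^* \ne A$, so $X$ and $Y$ cannot both be $I$. Writing each of $X,Y$ as a (possibly empty) word in $A$ and $A^*$, the right-hand side $XAY$ is a single word $D_1D_2\cdots D_m$ with each $D_i \in \{A,A^*\}$ and total length $m = p+1+q \ge 2$, where $p,q \ge 0$ are the lengths of $X,Y$ and $p+q \ge 1$.

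Next I would apply (\ref{GK'}) to the factorization $A^* = D_1\cdots D_m$. Recalling the standard fact that $s_j(A^*) = s_j(A)$ for every $j$ (since $A^*A$ and $AA^*$ have the same eigenvalues), each factor contributes $s_j(D_i) = s_j(A)$, and hence for every $k = 1,\dots,n$ one obtains
$$\sum_{j=1}^{k} s_j(A) = \sum_{j=1}^{k} s_j(A^*) \le \sum_{j=1}^{k} s_j(A)^m.$$
Here is precisely where the norm hypothesis enters: since $s_1(A) = ||A|| = 1$ and the $s$-numbers are nonnegative and nonincreasing, each $s_j(A) \in [0,1]$, so $s_j(A)^m \le s_j(A)$ term by term (as $m \ge 1$). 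This yields the reverse inequality $\sum_{j=1}^{k} s_j(A)^m \le \sum_{j=1}^{k} s_j(A)$, and therefore equality holds in the Gohberg--Kre\u{\i}n bound for every $k$.

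Finally, differencing consecutive partial sums (the $k=1$ case being immediate) forces $s_k(A)^m = s_k(A)$ for each $k$, that is $s_k(A)\bigl(s_k(A)^{m-1} - 1\bigr) = 0$. Since $m-1 \ge 1$ and $0 \le s_k(A) \le 1$, the only possibilities are $s_k(A) = 0$ or $s_k(A) = 1$, so every singular number of $A$ lies in $\{0,1\}$; Proposition \ref{prop2.1} then concludes that $A$ is a partial isometry. I expect the only points requiring care to be the bookkeeping that $m \ge 2$ (which is exactly where nonselfadjointness is used, and is needed so that $s_k(A)^{m-1}=1$ forces $s_k(A)=1$ rather than being vacuous) and the passage from equality of all partial sums to equality of individual terms. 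The conceptual crux is simply the observation that the Gohberg--Kre\u{\i}n bound and the constraint $||A|| = 1$ push the partial sums of the $s$-numbers in opposite directions, thereby pinning every $s_j(A)$ to an endpoint of the interval $[0,1]$.
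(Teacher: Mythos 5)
Your proof is correct and follows essentially the same route as the paper's: write $A^* = XAY$ with $m \ge 2$ factors, apply the Gohberg--Kre\u{\i}n inequality (\ref{GK'}) together with $s_j(A^*)=s_j(A)$ and $\|A\|=1$, and invoke Proposition \ref{prop2.1}. The only (cosmetic) difference is the endgame: you note that $s_j(A)^m \le s_j(A)$ gives the reverse inequality, force equality of all partial sums, and difference them, whereas the paper argues by contradiction at the smallest index $r$ with $0 < s_r(A) < 1$ --- both are valid ways to pin each $s_j(A)$ to $\{0,1\}$.
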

\begin{proof}
	Suppose $\Sc(A,A^*)$ is an SI semigroup. Then the principal ideal $(A)_{\Sc(A,A^*)}$ is selfadjoint, or equivalently as usual, $A^*=XAY$ for some $X,Y\in \Sc(A,A^*)\cup\{I\}$, where not both $X, Y$ are equal to the identity matrix because of the nonselfadjointness of $A$. Then the word $XAY$ is a word in powers of $A$ and $A^*$ with at least two terms. So by applying $s$-number inequality Equation (\ref{GK'}) to $XAY$ which is a finite product of powers of $A$ and $A^*$, and using the fact that $s_j(A^*)=s_j(A)$ for $1\leq j\leq n$, one obtains for some $m \ge 2$, 
	$$\sum_{j=1}^{k}s_j(A) = \sum_{j=1}^{k}s_j(A^*)= \sum_{j=1}^{k}s_j(XAY)\leq \sum_{j=1}^{k} s_j^m (A),\quad \text{for each}~1\leq k\leq n.$$
	Since $||A||=1,~s_1(A)=1$ and $0\leq s_j(A)\leq 1$ for all $1\leq j\leq n$ since $s$-numbers are in decreasing order. We will next show that $s_j(A)\in \{0,1\}$ for all $1\leq j\leq n$. 
	Suppose there exists some $1\leq j\leq n$ such that $0<s_j(A)<1$, and choose $r$ to be the smallest such index so that $0<s_r(A)<1$. Then from the above display, for $k=r$, one has
	$$\sum_{j=1}^{r}s_j(A)\leq \sum_{j=1}^{r} s_j^m (A).$$
	Since $s_j(A)=1$ for $1\leq j\leq r-1$, this further implies that 
	$$s_r(A)\leq s_r^m (A)$$
	contradicting $s_r^m (A)< s_r(A)$ because $0<s_r(A)<1$ and $m\geq 2$. Hence, $s_j(A)\in \{0,1\}$ for all $1\leq j\leq n$ and by Proposition~\ref{prop2.1}, $A$ is a partial isometry.
\end{proof}
Lemma \ref{L} immediately provides the following norm characterization of a partial isometry $A$ under the SI property of $\Sc(A, A^*)$:
\begin{theorem}\label{p}
For $A\in M_n(\mathbb{C})$ a nonselfadjoint matrix, let $\Sc(A,A^*)$ be an SI semigroup. Then
\begin{center}
$A$ is a partial isometry if and only if $||A|| =1$.
\end{center}
\end{theorem}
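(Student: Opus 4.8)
The plan is to recognize that this theorem is essentially a repackaging of the preceding Lemma \ref{L}, so the proof should be very short: one direction is the lemma verbatim, and the other direction is a standard fact about partial isometries. I would prove the two implications separately.

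For the forward implication, I would assume $A$ is a partial isometry and deduce $\|A\|=1$. Here I would use the standard fact (already invoked in the text) that every nonzero partial isometry has norm one. The only point requiring a remark is that $A$ is nonzero: since $A$ is nonselfadjoint by hypothesis, it cannot be the zero matrix (which is selfadjoint), so $A\neq 0$ and hence $\|A\|=1$. This direction does not even use the SI hypothesis.

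For the reverse implication, I would assume $\|A\|=1$ and apply Lemma \ref{L} directly. Since $A$ is nonselfadjoint and $\Sc(A,A^*)$ is an SI semigroup, the hypotheses of Lemma \ref{L} are met, and the lemma concludes that $A$ is a partial isometry. No further computation is needed.

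I do not expect any genuine obstacle here, since all the real work (the $s$-number argument via the Gohberg--Kre\u{\i}n inequality \eqref{GK'} combined with the selfadjoint-ideal equation $A^*=XAY$) has already been carried out in Lemma \ref{L} and in Proposition \ref{prop2.1}. The theorem simply combines Lemma \ref{L} with the trivial norm bound for partial isometries to state both implications as an equivalence; the SI hypothesis is only needed for the nontrivial ($\|A\|=1 \Rightarrow$ partial isometry) direction, which is precisely where the converse can fail without it, as the matrix $\bigl[\begin{smallmatrix}0 & 1/2\\ 1 & 0\end{smallmatrix}\bigr]$ displayed before Proposition \ref{prop2.1} illustrates.
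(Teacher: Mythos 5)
Your proposal is correct and matches the paper exactly: the paper states Theorem \ref{p} as an immediate consequence of Lemma \ref{L} (for the direction $\|A\|=1 \Rightarrow$ partial isometry) together with the standard fact that nonzero partial isometries have norm one, which is precisely your two-implication argument. Your explicit remark that nonselfadjointness forces $A\neq 0$ is a small detail the paper leaves implicit, but it is the same proof.
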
 
We now give the example promised prior to Proposition \ref{prop2.1} that the above lemma may not hold for infinite rank operators. 
\begin{example}\label{infinite}
	Let $W$ be a weighted shift operator on the Hilbert space $l^2(\mathbb{N})$ with the weight sequence $(1/2,1,1,1,\ldots)$. Then $||W||=1$ and one can check that $W$ satisfies the relation $W^*W^2=W$. Therefore, it follows from Proposition \ref{quasi-isometries} proved below that $\Sc(W,W^*)$ is simple and hence trivially an SI semigroup. But $W$ is not a partial isometry because $W^*W$ is not a projection (as $W^*We_1=1/4e_1$).
\end{example}

We note that the following proposition with essentially the same proof also holds for operators $T \in B(\mathcal H)$.
\begin{proposition}\label{quasi-isometries}
	Let $T\in M_n(\mathbb C)$ and $T$ satisfies $(T^*T)T=T$. Then $\Sc(T,T^*)$ is simple. 
	\end{proposition}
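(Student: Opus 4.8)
The plan is to prove simplicity by showing that every principal ideal of $\Sc(T,T^*)$ is already the whole semigroup. Since any ideal of $\Sc(T,T^*)$ contains a principal ideal, and since a principal ideal that contains both generators $T$ and $T^*$ must contain every word in $T,T^*$ and hence all of $\Sc(T,T^*)$, it suffices to prove that $T\in(S)_{\Sc(T,T^*)}$ and $T^*\in(S)_{\Sc(T,T^*)}$ for every word $S$.

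First I would record the algebraic consequences of the hypothesis, which I rewrite as $T^*T^2=T$. Multiplying this identity on the right by $T^{n-2}$ gives the reduction rule $T^*T^{n}=T^{n-1}$ for every $n\geq 2$, and iterating it from $n=K$ down to $n=2$ yields $T^{*(K-1)}T^{K}=T$ for every $K\geq 1$. Taking adjoints of $T^*T^2=T$ gives $T^{*2}T=T^*$, and combining this with the previous identity gives $T^{*(K+1)}T^{K}=T^*$ for every $K\geq 1$. These identities are the engine of the argument: they allow a block $T^{*b}$ sitting immediately to the left of a block $T^{c}$ to collapse to $T^{c-b}$ whenever $c>b$.

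The key step is a Reduction Lemma: for any word $S$ in $T,T^*$ and any sufficiently large $M$, the product $ST^{M}$ equals a single power $T^{K}$ with $K=M+\deg_T(S)-\deg_{T^*}(S)\geq 1$, where $\deg_T$ and $\deg_{T^*}$ count the total powers of $T$ and of $T^*$ occurring in $S$. I would prove this by writing $S$ as alternating blocks of powers of $T$ and $T^*$ and collapsing $ST^{M}$ from the right, repeatedly applying $T^{*b}T^{c}=T^{c-b}$ and $T^{a}T^{c}=T^{a+c}$, with $M$ chosen large enough that every intermediate $T$-exponent stays strictly above the adjacent $T^*$-exponent. Granting the lemma the conclusion is immediate: given a word $S$, pick such an $M$ so that $ST^{M}=T^{K}$ with $K\geq 1$. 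Then $T=T^{*(K-1)}T^{K}=T^{*(K-1)}(ST^{M})=T^{*(K-1)}\,S\,T^{M}$ exhibits $T$ as $XSY$ with $X=T^{*(K-1)}\in\Sc(T,T^*)\cup\{I\}$ and $Y=T^{M}\in\Sc(T,T^*)$, so $T\in(S)_{\Sc(T,T^*)}$; likewise $T^*=T^{*(K+1)}\,S\,T^{M}\in(S)_{\Sc(T,T^*)}$. (Here $T^{K}\neq 0$ since $T^{*(K-1)}T^{K}=T\neq 0$, so these products are genuinely nonzero.) Hence $(S)_{\Sc(T,T^*)}=\Sc(T,T^*)$ for every $S$, and $\Sc(T,T^*)$ is simple.

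I expect the main obstacle to be the bookkeeping inside the Reduction Lemma, namely verifying that collapsing $ST^{M}$ right-to-left never creates a nonpositive exponent and that each use of $T^{*b}T^{c}=T^{c-b}$ actually meets its hypothesis $c>b$. This is exactly where taking $M$ large is essential, and it is cleanest to organize the collapse as an induction on the number of alternating blocks, handling separately the four shapes of word (leading and trailing letter $T$ versus $T^*$) in the list of Equation (\ref{singly generated semigroup list}), along with the degenerate cases $S=T^{n}$, $S=T^{*n}$, and $K=1$, where $T^{*(K-1)}=I$.
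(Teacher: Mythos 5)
Your proposal is correct and takes essentially the same route as the paper: the paper derives the same two identities (${T^*}^nT^{n+1}=T$ and ${T^*}^{n+1}T^n=T^*$) and collapses each word against a pure power, except it left-multiplies by ${T^*}^s$ with $s$ the exact total $T$-degree (reducing first to the fourth and sixth word forms) rather than right-multiplying by a large $T^M$ as in your uniform Reduction Lemma --- the two computations are adjoint-mirror images of one another. Both arguments then recover $T$ and $T^*$ inside the principal ideal in the same way, so the difference is organizational, not substantive.
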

\begin{proof}
	Since $(T^*T)T=T$, by induction one obtains, for all $n \geq 2$,
	\begin{equation}\label{equation1}
	{T^*}^nT^{n+1}=T
	\end{equation}
	Hence also, for all $n \geq 1$,
	\begin{equation}\label{equation2}
	{T^*}^{n+1}T^n=T^*
	\end{equation}
	Recall the semigroup list
	\noindent $\Sc(T,T^*) =  \{T^n, {T^*}^n,  \Pi_{j=1}^{k}{T^*}^{m_j}T^{n_j},
	 (\Pi_{j=1}^{k}{T^*}^{m_j}T^{n_j}){T^*}^{m_{k+1}},\Pi_{j=1}^{k}T^{n_j}{T^*}^{m_j},$  $(\Pi_{j=1}^{k}T^{n_j}{T^*}^{m_j})T^{n_{k+1}}\}$ where $n \ge 1,\,  k\ge1,\, n_j, m_j \ge 1\, \text{ for }\, 1 \le j \le k \text{ and } n_{k+1},m_{k+1}\geq 1$. To prove $\Sc(T,T^*)$ is simple, it suffices to show that the principal ideal generated by each form in the semigroup list coincides with the entire semigroup $\Sc(T,T^*)$. Furthermore, it suffices to show that the principal ideals generated by all the fourth and sixth forms coincide with $\Sc(T^*, T)$ because each principal ideal generated by each of the other forms contains a fourth and a sixth form.
	 
Consider a matrix $A$ in the fourth form. So $A=(\Pi_{j=1}^{k}{T^*}^{m_j}T^{n_j}){T^*}^{m_{k+1}}$ for some $m_j, n_j \geq 1$ and $m_{k+1} 
\geq 1$. Let $s = \sum_{j=1}^{k} n_j$ and $r = \sum_{j=1}^{k+1} m_j$. Then,
\begin{equation*}
\begin{aligned}
{T^*}^{s}A &= {T^*}^s({T^*}^{m_1}T^{n_1})(\Pi_{j=2}^{k}{T^*}^{m_j}T^{n_j}){T^*}^{m_{k+1}}\\
&= {T^*}^{s+ m_1 - n_1 -1} ({T^*}^{n_1 +1}T^{n_1})(\Pi_{j=2}^{k}{T^*}^{m_j}T^{n_j}){T^*}^{m_{k+1}} \quad \text{(add and substract $n_1+1$ from the power $s$ of $T^*$)}\\ 
&= {T^*}^{s+ m_1 - n_1}(\Pi_{j=2}^{k}{T^*}^{m_j}T^{n_j}){T^*}^{m_{k+1}} \qquad \qquad (\text{from Equation (\ref{equation2}) above ${T^*}^{n_1 +1}T^{n_1} = T^*$})\\
&= {T^*}^{s+(m_1-n_1)+(m_2 -n_2 -1)}({T^*}^{n_2 +1}T^{n_2})(\Pi_{j=3}^{k}{T^*}^{m_j}T^{n_j}){T^*}^{m_{k+1}}\\
&={T^*}^{s+(m_1-n_1)+(m_2-n_2)}(\Pi_{j=3}^{k}{T^*}^{m_j}T^{n_j}){T^*}^{m_{k+1}} \qquad (\text{again from Equation (\ref{equation2})})\\
\qquad \vdots\\
&= {T^*}^{\sum_{j=1}^{k} m_j}{T^*}^{m_{k+1}}\\
&= {T^*}^r \qquad (\text{from Equation (\ref{equation1}) above})
\end{aligned}
\end{equation*}
Since ${T^*}^{s+1}AT^r \in (A)_{\Sc(T, T^*)}$ and ${T^*}^{s+1}AT^r=T^*({T^*}^sA)T^r={T^*}^{r+1}T^r=T^*$ (from Equation \ref{equation2}), one obtains $T^* \in (A)_{\Sc(T, T^*)}$. Also note that $({T^*}^{s}A)T^{r+1} ={T^*}^rT^{r+1}= T$ (from Equation (\ref{equation1})), so $T\in (A)_{\Sc(T, T^*)}$. And since $T,T^*\in(A)_{\Sc(T, T^*)}$, $(A)_{\Sc(T, T^*)} = \Sc(T, T^*)$. 

We next consider the sixth form. So $A= (\Pi_{j=1}^{k}T^{n_j}{T^*}^{m_j})T^{n_{k+1}}$ for some $n_j, m_j \geq 1$, $1\leq j\leq k$, and $n_{k+1} 
\geq 1$. The matrix ${T^*}^{n_1}A{T^*}^{n_{k+1}} \in (A)_{\Sc(T, T^*)}$. Note that ${T^*}^{n_1}A{T^*}^{n_{k+1}}$ is back in the fourth form. Hence $({T^*}^{n_1}A{T^*}^{n_{k+1}})_{\Sc(T, T^*)}=\Sc(T, T^*)$. But $({T^*}^{n_1}A{T^*}^{n_{k+1}})_{\Sc(T, T^*)}\subset (A)_{\Sc(T, T^*)}$ so $(A)_{\Sc(T, T^*)}=\Sc(T, T^*)$. 
\end{proof}

\section{SI semigroups $\Sc(A, A^*)$ generated by matrices $A$ with nonnegative entries}

Our first attempts to progress beyond the characterizations regarding SI for rank one operators in \cite{PW21} were to investigate matrices (finite and infinite) with nonnegative entries. The results we obtain are more elementary than the results obtained earlier here and in [\cite{PWS}], so we present them here to complete this paper.

\begin{proposition}\label{prop1.1'}
	Let $A=[a_{ij}]$ and $B=[b_{ij}]$ denote nonzero matrices in $M_n(\mathbb{C})$ such that the nonzero entries of $A$ and $B$ are greater than $1$. Let $ a= \min\limits_{1\leq i,j \leq n}\{a_{ij}: a_{ij}\neq 0\}$ and $b=\min\limits_{1\leq i,j \leq n} \{b_{ij}: b_{ij}\neq 0\}$. If $AB=[c_{ij}]$ is nonzero, then
$$\min\limits_{1\leq i,j \leq n} \{c_{ij}: c_{ij}\neq 0\}> ab.$$
Consequently, $\min\limits_{1\leq i,j \leq n} \{c_{ij}: c_{ij}\neq 0\}> a$ and $\min\limits_{1\leq i,j \leq n} \{c_{ij}: c_{ij}\neq 0\}> b$.
	\end{proposition}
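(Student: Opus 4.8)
The plan is to argue entrywise and to exploit that, in this section, the entries are nonnegative reals, so the hypothesis that all nonzero entries exceed $1$ gives $a>1$ and $b>1$. Fix a pair $(i,j)$ with $c_{ij}\neq 0$ and expand
\[
c_{ij}=\sum_{k=1}^{n}a_{ik}b_{kj}.
\]
Each summand $a_{ik}b_{kj}$ is a product of nonnegative numbers, hence $\geq 0$, so $c_{ij}\geq 0$; and since $c_{ij}\neq 0$, at least one term must be strictly positive, i.e. there is an index $k_0$ with $a_{ik_0}\neq 0$ and $b_{k_0 j}\neq 0$.

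For that $k_0$, the definitions of $a$ and $b$ as the minimal nonzero entries give $a_{ik_0}\geq a$ and $b_{k_0 j}\geq b$, so $a_{ik_0}b_{k_0 j}\geq ab$. Discarding the remaining nonnegative summands yields $c_{ij}\geq ab$, and taking the minimum over all $(i,j)$ with $c_{ij}\neq 0$ gives $\min\{c_{ij}:c_{ij}\neq 0\}\geq ab$. The step I expect to be the main obstacle is promoting this to the strict inequality $>ab$: strictness is automatic as soon as two distinct indices $k$ contribute positive terms, since a second strictly positive summand is then discarded, so the only borderline situation is a single contributing index with $a_{ik_0}=a$ and $b_{k_0 j}=b$. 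Isolating this single-term case, and showing the surrounding structure keeps the bound $ab$ from being attained, is exactly where the argument is delicate and seems to demand more than mere nonzeroness of $A$ and $B$; I would treat it as a separate case split on the number of contributing indices $k$.

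Finally, the two ``Consequently'' assertions follow with no further work from $c_{ij}\geq ab$ together with $a,b>1$: since $b>1$ one has $ab>a$, and since $a>1$ one has $ab>b$. Hence every nonzero entry of $AB$ satisfies $c_{ij}\geq ab>a$ and $c_{ij}\geq ab>b$, so $\min\{c_{ij}:c_{ij}\neq 0\}>a$ and $>b$. I would emphasize that these last two inequalities are genuinely strict regardless of the borderline single-term case above, since they use only $a>1$ and $b>1$ rather than the delicate strict comparison with the product $ab$ itself.
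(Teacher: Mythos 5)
Your argument is essentially the paper's own proof: expand $c_{ij}=\sum_{k}a_{ik}b_{kj}$, use nonnegativity of the entries to discard all but one nonzero summand, bound that summand below by $ab$, and deduce the two ``Consequently'' inequalities from $a,b>1$. The one place you diverge is in flagging the strict inequality $>ab$ as an unresolved obstacle, and here you should be reassured rather than worried: that borderline case you isolated (a single contributing index $k_0$ with $a_{ik_0}=a$ and $b_{k_0j}=b$) genuinely occurs, so the strict bound is simply false as stated. For instance, take $A=B$ to be the matrix with $2$ in the $(1,1)$ entry and zeros elsewhere; then $a=b=2$ and the only nonzero entry of $AB$ equals $4=ab$. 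The paper's proof has the same feature as yours: it only ever derives $\min\{c_{ij}:c_{ij}\neq 0\}\geq ab$ and then passes immediately to $ab>a$ and $ab>b$, so the displayed strict inequality $>ab$ in the proposition is an overstatement that neither proof supports (nor could). What is actually used downstream, in the corollaries on products of several matrices and on non-SI semigroups, is precisely the pair of strict inequalities $>a$ and $>b$, which you obtain correctly from $\geq ab$ together with $a,b>1$. In short: your proof establishes exactly what is provable, by the same route as the paper, and your instinct that the single-term case ``demands more than mere nonzeroness of $A$ and $B$'' is correct --- no additional hypothesis in the proposition rules it out.
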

\begin{proof} Since the nonzero entries of $A$ and $B$ are greater than $1$, the nonzero entries of 
$AB := C$ are greater than $1$. 
Indeed, $C = [c_{ij}] = [\sum_{k=1}^{n} a_{ik}b_{kj}]$. 
Then each $c_{ij} \ne 0$ (if any) has $c_{ij} \ge a_{ik}b_{kj} \ge ab$ for some $1 \le k \le n$. 
$c_{ij}= \sum_{k=1}^{n} a_{ik}b_{kj}$. 
Then since $C$ is nonzero, $c_{i_0j_0}\neq0$ for some $1\leq i_0,j_0 \leq n$. And so there exists $1\leq s \leq n$ such that $a_{i_0s}b_{sj_0}\neq 0$.
Since $a_{ij},b_{ij}\geq 0$ for $1\leq i,j \leq n$, 
 $${c_{i_0j_0}=\sum_{k=1}^{n}a_{i_0k}b_{kj_0} \geq a_{i_0s}b_{sj_0}} \geq ab.$$
Therefore, $\min\limits_{1\leq i,j \leq n} \{c_{ij}: c_{ij}\neq 0\}\geq ab$.
Since the nonzero entries of $A$ and $B$ are greater than $1$, $a, b >1$ and hence $ab >a$ and $b$. 
\end{proof}


One can easily extend Proposition \ref{prop1.1'} to finite products of finite matrices.
\begin{corollary}\label{cor2'}
For each $1\leq k\leq m, let A_k = [a_{ij}^{(k)}]$ denote a matrix in  $M_n(\mathbb{C})$ with nonzero entries of each $A_k$ greater than $1$.
If $A=\prod_{k=1}^{m}A_k = [a_{ij}]$ is nonzero, then for $1\leq k\leq m$, we have for each $k$,
$$\min\limits_{1\leq i,j \leq n} \{a_{ij}: a_{ij}\neq 0\} \ge 
\Pi_{l = 1}^m \min\limits_{1\leq i,j \leq n}\{a_{ij}^{(l)}: a_{ij}^{(l)}\neq 0\} 
> \min\limits_{1\leq i,j \leq n}\{a_{ij}^{(k)}: a_{ij}^{(k)}\neq 0\}.$$
\end{corollary}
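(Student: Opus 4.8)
The plan is to prove this by induction on $m$, with Proposition \ref{prop1.1'} serving as the engine of the inductive step. Write $a^{(k)} := \min\{a_{ij}^{(k)} : a_{ij}^{(k)} \neq 0\}$ for the minimal nonzero entry of $A_k$; by hypothesis each $a^{(k)} > 1$. The asserted chain
$$\min\{a_{ij} : a_{ij} \neq 0\} \ge \prod_{l=1}^{m} a^{(l)} > a^{(k)}$$
splits into two independent claims: the left inequality, that the minimal nonzero entry of the product dominates the product of the minimal nonzero entries, and the right strict inequality.

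For the left inequality I would set $B := A_1 \cdots A_{m-1}$, so that $A = B A_m$, and induct on $m$. The base case $m = 2$ is precisely the $\ge ab$ conclusion established in the proof of Proposition \ref{prop1.1'}. For the inductive step ($m \ge 3$), first observe that since $A = B A_m$ is nonzero, the prefix product $B = [b_{ij}]$ must itself be nonzero, as a zero factor would force the whole product to vanish. Writing $b := \min\{b_{ij} : b_{ij} \neq 0\}$, the induction hypothesis applied to $B$ gives $b \ge \prod_{l=1}^{m-1} a^{(l)} > 1$, so every nonzero entry of $B$ exceeds $1$; hence $B$ and $A_m$ jointly satisfy the hypotheses of Proposition \ref{prop1.1'}. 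Applying that proposition to $B A_m$ yields
$$\min\{a_{ij} : a_{ij} \neq 0\} \ge b\, a^{(m)} \ge \Big(\prod_{l=1}^{m-1} a^{(l)}\Big) a^{(m)} = \prod_{l=1}^{m} a^{(l)},$$
which completes the induction.

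The right strict inequality is elementary and does not involve the matrix structure at all: since each $a^{(l)} > 1$ and $m \ge 2$, the partial product over the remaining indices satisfies $\prod_{l \neq k} a^{(l)} > 1$, whence $\prod_{l=1}^{m} a^{(l)} = a^{(k)} \prod_{l \neq k} a^{(l)} > a^{(k)}$ for each fixed $k$.

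The only point needing care, and hence the main (though mild) obstacle, is verifying at each stage that the intermediate prefix product remains eligible for Proposition \ref{prop1.1'}, i.e.\ that it is nonzero and has all its nonzero entries greater than $1$. The nonzeroness is delivered by the nonvanishing of the full product $A$, and the lower bound on entries by the induction hypothesis; once this bookkeeping is in place the argument is routine.
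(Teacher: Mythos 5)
Your proof is correct and is exactly the routine induction the paper has in mind: the paper omits the argument entirely, saying only that one can ``easily extend'' Proposition \ref{prop1.1'} to finite products. Your bookkeeping (nonzeroness of the prefix product $B$, the inherited bound that its nonzero entries exceed $1$, and your use of the $\geq ab$ form actually established in the proposition's proof rather than the $>ab$ misstated in its enunciation) is precisely what makes that extension go through, with the strict inequality recovered at the end from $m\geq 2$ and each $a^{(l)}>1$.
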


\begin{rem}
Proposition \ref{prop1.1'} may not hold for infinite matrices. Consider $A=B=diag(1+1/n)_{n=1}^\infty$. Then $1 = \inf \{a_{ii}^2 : i\in \mathbb{N}\}=\inf\{(1+1/n)^2: n\in \mathbb{N} \}=\inf\{(1+1/n): n\in \mathbb{N} \}=\inf \{a_{ii} : i\in \mathbb{N}\}$.
\end{rem}
	\begin{corollary} \label{1.1 cor'}
		If $A=[a_{ij}]$ is a matrix in $M_n(\mathbb{C})$ with all nonzero entries greater than 1 (or all less than $-1$), then $\Sc (A,A^*)$ is a non-SI semigroup.
	\end{corollary}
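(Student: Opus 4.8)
The plan is to argue by contradiction: assume $\Sc(A,A^*)$ is an SI semigroup and derive an impossible inequality on the entries. As recorded earlier, the SI property forces the principal ideal $(A)_{\Sc(A,A^*)}$ to be selfadjoint, so $A^*=XAY$ for some $X,Y\in\Sc(A,A^*)\cup\{I\}$. Since a selfadjoint generator always produces an SI semigroup consisting only of its (selfadjoint) powers, the substantive case---and the one we treat---is $A\neq A^*$; then $X,Y$ cannot both equal $I$, so $XAY$ is a genuine word of length at least two in the letters $A$ and $A^*$.

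First I would handle the case in which all nonzero entries of $A$ are $>1$. Here each letter $A$ and $A^*$ has all nonzero entries $>1$ (note $A^*=A^{T}$, since the entries are real, so it carries the same multiset of entries), and the product $XAY$ is nonzero because it equals $A^*\neq 0$. Applying Corollary \ref{cor2'} to this product of at least two such factors gives
\[
\min\{\,|b_{ij}| : b_{ij}\neq 0\,\}\;>\;\min\{\,|a_{ij}| : a_{ij}\neq 0\,\},
\]
where $[b_{ij}]=XAY$. But $XAY=A^*$, and $A^*$ shares with $A$ the same minimal nonzero entry, so the two sides are equal, contradicting the strict inequality. Hence $\Sc(A,A^*)$ is not SI in this case.

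The main obstacle is the case in which all nonzero entries of $A$ are $<-1$, because Corollary \ref{cor2'} is stated only for factors whose nonzero entries exceed $1$, whereas now the letters $A,A^*$ have negative entries. The point to establish is that \emph{no additive cancellation occurs} in any product of our letters. Concretely, I would prove by induction on word length $\ell$ that every word $W$ in $A,A^*$ has all its nonzero entries of the single sign $(-1)^{\ell}$: when a letter whose nonzero entries are all negative is appended on the right, each summand in the $(i,j)$ entry of the new word has the common sign $(-1)^{\ell}\cdot(-1)=(-1)^{\ell+1}$, so the nonzero entries of the new word again all share one sign and nothing cancels. Consequently the entrywise magnitudes obey exactly the multiplicative growth of Proposition \ref{prop1.1'} applied to the absolute-value matrices $|A|,|A^*|$ (whose nonzero entries are $>1$), yielding the same strict inequality $\min\{|b_{ij}|:b_{ij}\neq 0\}>\min\{|a_{ij}|:a_{ij}\neq 0\}$ for $[b_{ij}]=XAY$.

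With the no-cancellation observation in hand, the contradiction is identical to the positive case: $A^*=XAY$ equates two quantities whose minimal nonzero magnitudes are forced to differ strictly. Thus in both cases $A^*\notin(A)_{\Sc(A,A^*)}$, so $(A)_{\Sc(A,A^*)}$ is not selfadjoint and $\Sc(A,A^*)$ fails to be SI. I expect the induction establishing sign-consistency (hence the absence of cancellation) to be the only genuinely new ingredient; everything else is a direct appeal to Corollary \ref{cor2'} / Proposition \ref{prop1.1'} together with the standard reduction of the SI property to the solvability of $A^*=XAY$.
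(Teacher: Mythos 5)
Your proof is correct and, in the case where all nonzero entries exceed $1$, it is exactly the paper's argument: assume SI, write $A^*=XAY$ with $X,Y\in\Sc(A,A^*)\cup\{I\}$ not both the identity, apply Corollary \ref{cor2'} to get that the minimum nonzero entry of $XAY$ strictly exceeds that of $A$, and contradict $XAY=A^*$. The genuine difference is your treatment of the negative case: the paper disposes of it with the single sentence ``And similarly if all entries are less than $-1$,'' which is not a direct application of Corollary \ref{cor2'}, since its hypothesis (nonzero entries $>1$) fails for $A$ and $A^*$ there. Your sign-consistency induction --- every word of length $\ell$ in $A,A^*$ has all nonzero entries of sign $(-1)^{\ell}$, so no additive cancellation occurs and the entrywise absolute value of a word equals the corresponding word in the absolute-value matrices, to which Proposition \ref{prop1.1'} and Corollary \ref{cor2'} do apply --- is precisely the justification that the paper's ``similarly'' tacitly requires, and it is a worthwhile addition (it also shows the even-length words are ruled out already by sign alone). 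A further point in your favor: your explicit restriction to $A\neq A^*$ is not cosmetic. A selfadjoint matrix can have all nonzero entries greater than $1$ (e.g.\ the matrix with every entry equal to $2$), and for such $A$ the semigroup $\Sc(A,A^*)=\{A^{k}:k\geq 1\}$ is automatically SI, so the statement genuinely needs nonselfadjointness as a hypothesis; the paper's parenthetical ``(as $A^*\neq A$)'' assumes this without comment, whereas you flag it and confine the argument to the only case in which the conclusion can hold.
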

	\begin{proof} If $\Sc (A,A^*)$ is an SI semigroup, then the principal ideal $(A)_{\Sc (A,A^*)}$ of $\Sc (A,A^*)$ is selfadjoint. That is, $A^*=XAY$ where $X,Y \in {\Sc (A,A^*)}\cup\{I\}$ and $X$ and $Y$ cannot both be the identity (as $A^*\neq A$). By Corollary \ref{cor2'} applied to the product $XAY$, the minimum nonzero entry of $XAY$ is greater than the minimum nonzero entry of $A$ (or $A^*$). This contradicts the fact that the minimum nonzero entry of $XAY$ must be equal to the minimum nonzero entry of $A^*$ because of the equality $A^*=XAY$. Therefore the ideal  $(A)_{\Sc (A,A^*)}$ is not selfadjoint. Hence, $\Sc (A,A^*)$ is a non-SI semigroup. And similarly if all entries are less than $-1$.
		\end{proof}
	
		

	The proof of the following version combining Proposition \ref{prop1.1'} and Corollary \ref{cor2'} but for infinite matrices is straightforward and is left to the reader. And likewise for Corollary \ref{1.1 cor'}.
	\begin{proposition}\label{prop1.2'}
		Let for each $1\leq k\leq m, let A_k = [a_{ij}^{(k)}]$ denote matrix representations of some operators in $\mathcal{B}(\mathcal{H})$ in a common orthonormal basis such that the nonzero entries of each $A_k$ are greater than 1.\\
		If $A:=\prod_{k=1}^{m}A_k = [a_{ij}]$ is nonzero, then 
		$\inf\limits_{i,j} \{a_{ij}: a_{ij}\neq 0\} \geq \prod_{k=1}^{m}\{\inf\limits_{i,j} \{a_{ij}^{(k)}: a_{ij}^{(k)}\neq 0\}\}.$\\
		Moreover if for all $1\leq k\leq m, \inf\limits_{i,j} \{a_{ij}^{(k)}:a_{ij}^{(k)}\neq 0\}> 1, $ then\\
		$\inf\limits_{i,j} \{a_{ij}: a_{ij}\neq 0\} \ge 
\Pi_{l = 1}^m \inf\limits_{i,j}\{a_{ij}^{(l)}: a_{ij}^{(l)}\neq 0\}  > \inf\limits_{i,j} \{a_{ij}^{(k)}: a_{ij}^{(k)}\neq 0\}$ ; for all $1\leq k\leq m$.
		\end{proposition}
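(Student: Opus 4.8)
The plan is to reduce everything to the two–matrix inequality of Proposition \ref{prop1.1'} and then induct on $m$, observing that that proposition's argument survives verbatim for infinite matrices because it only lower–bounds a convergent series of nonnegative terms by one of its terms. Write $a^{(k)} := \inf_{i,j}\{a_{ij}^{(k)} : a_{ij}^{(k)} \neq 0\}$ for each $k$; by hypothesis each $a^{(k)} \geq 1$, and in the ``moreover'' part each $a^{(k)} > 1$.

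First I would establish the two–matrix case for bounded operators $A,B$ with nonnegative matrix entries. For fixed $i,j$ the entry $(AB)_{ij} = \langle AB e_j, e_i\rangle = \langle Be_j, A^* e_i\rangle = \sum_k a_{ik} b_{kj}$ is an inner product of the two $\ell^2$ sequences $(a_{ik})_k$ and $(b_{kj})_k$, hence an absolutely convergent series of nonnegative reals summing to the finite number $(AB)_{ij}$. In particular every such entry is $\geq 0$, so products of nonnegative–entry operators again have nonnegative entries, which is what permits the induction below. Now if $(AB)_{i_0 j_0} \neq 0$ then some term $a_{i_0 s} b_{s j_0} > 0$, whence both factors are nonzero and $(AB)_{i_0 j_0} = \sum_k a_{i_0 k} b_{k j_0} \geq a_{i_0 s} b_{s j_0} \geq \inf\{a_{ij}: a_{ij}\neq 0\}\,\inf\{b_{ij}: b_{ij}\neq 0\}$. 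Taking the infimum over the nonzero entries of $AB$ gives $\inf\{(AB)_{ij}: (AB)_{ij}\neq 0\} \geq \inf\{a_{ij}: a_{ij}\neq 0\}\,\inf\{b_{ij}: b_{ij}\neq 0\}$; note that, exactly as in Proposition \ref{prop1.1'}, only nonnegativity and the two infimum bounds are used here, not the strict bound $>1$.

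For the general product I would induct on $m$, writing $A = A_1 B$ with $B := \prod_{l=2}^m A_l$. Since $A \neq 0$ forces $B \neq 0$, and $B$ has nonnegative entries with $\inf\{b_{ij}: b_{ij}\neq 0\} \geq \prod_{l=2}^m a^{(l)}$ by the inductive hypothesis, the two–matrix case yields $\inf\{a_{ij}: a_{ij}\neq 0\} \geq a^{(1)} \inf\{b_{ij}: b_{ij}\neq 0\} \geq \prod_{l=1}^m a^{(l)}$, which is the first assertion. For the ``moreover'' part, assuming $a^{(l)} > 1$ for every $l$ and $m \geq 2$, factor $\prod_{l=1}^m a^{(l)} = a^{(k)}\prod_{l\neq k} a^{(l)}$; the trailing product is a product of at least one factor each exceeding $1$, hence exceeds $1$, giving $\prod_{l=1}^m a^{(l)} > a^{(k)}$ and therefore $\inf\{a_{ij}: a_{ij}\neq 0\} > a^{(k)}$ for each $k$.

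The only genuine obstacle beyond the finite case is that strictness requires the \emph{infimum} $a^{(k)}$, not merely each entry, to exceed $1$: as the remark following Corollary \ref{cor2'} shows with $A = B = \diag(1 + 1/n)_{n=1}^\infty$, nonzero entries $>1$ with infimum equal to $1$ already defeat the strict conclusion, which is precisely why the hypothesis is phrased in terms of $\inf_{i,j}\{a_{ij}^{(k)}: a_{ij}^{(k)}\neq 0\} > 1$ rather than the entrywise bound. Everything else is the nonnegative–series manipulation of Proposition \ref{prop1.1'} carried through an $m$–fold induction.
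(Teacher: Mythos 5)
Your proof is correct and follows exactly the route the paper intends: the paper leaves this proposition to the reader as a "straightforward" adaptation of Proposition \ref{prop1.1'} and Corollary \ref{cor2'} to infinite matrices, and your argument is precisely that adaptation — the entrywise bound $(AB)_{i_0j_0}=\sum_k a_{i_0k}b_{kj_0}\geq a_{i_0s}b_{sj_0}$ (now for an absolutely convergent $\ell^2$ series rather than a finite sum) followed by induction on $m$. Your added observations (that $A\neq 0$ forces the tail product to be nonzero, that the strict "moreover" inequality needs $m\geq 2$ and the infimum hypothesis rather than the entrywise one) are exactly the details the paper glosses over, and they are handled correctly.
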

	\begin{corollary}\label{cor3'}
			Let $A=[a_{ij}]$ be a matrix representation of an operator in $\mathcal{B}(\mathcal{H})$ with respect to some orthonormal basis such that the nonzero entries of $A$ are greater than $1$.
			If	$\inf\limits_{i,j} \{a_{ij}: a_{ij}\neq 0\} >1$, then  $\Sc (A,A^*)$ is a non-SI semigroup. And similarly for the $-1$ case.
		\end{corollary}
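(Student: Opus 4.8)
The plan is to mirror the finite-dimensional argument of Corollary~\ref{1.1 cor'}, substituting the infinite-matrix estimate Proposition~\ref{prop1.2'} for its finite counterpart Corollary~\ref{cor2'}. So I would argue by contradiction: suppose $\Sc(A,A^*)$ is an SI semigroup. Then in particular the principal ideal $(A)_{\Sc(A,A^*)}$ is selfadjoint, whence $A^*=XAY$ for some $X,Y\in\Sc(A,A^*)\cup\{I\}$ with not both $X,Y$ equal to the identity (since $A^*\neq A$). I would then record the two facts driving the contradiction. First, because the nonzero entries of $A$ are positive reals, $A^*=[\,\overline{a_{ji}}\,]$ has exactly the same set of nonzero entries as $A$, so $\inf_{i,j}\{(A^*)_{ij}:(A^*)_{ij}\neq0\}=\inf_{i,j}\{a_{ij}:a_{ij}\neq0\}=:\alpha>1$. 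Second, each of $X$ and $Y$ is a word in $A$ and $A^*$, so upon expanding, $XAY$ is an ordered product of $m\geq2$ matrices each equal to $A$ or $A^*$, and each therefore having infimum of nonzero entries equal to $\alpha>1$.

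I would then apply Proposition~\ref{prop1.2'} to this product. Since every factor has infimum of nonzero entries strictly larger than $1$, its second assertion yields $\inf_{i,j}\{(XAY)_{ij}:(XAY)_{ij}\neq0\}>\alpha$, strictly exceeding the infimum of the nonzero entries of any single factor. But the equation $A^*=XAY$ forces this infimum to equal $\alpha$, giving $\alpha>\alpha$, a contradiction. Hence $(A)_{\Sc(A,A^*)}$ fails to be selfadjoint and $\Sc(A,A^*)$ is not SI. The case in which all nonzero entries are less than $-1$ is handled identically after passing to the entrywise-positive matrix $-A$ (equivalently, tracking the infimum of the absolute values of the nonzero entries), to which Proposition~\ref{prop1.2'} applies verbatim.

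The main obstacle, and the reason the hypothesis $\inf_{i,j}\{a_{ij}:a_{ij}\neq0\}>1$ (rather than merely requiring each nonzero entry to exceed $1$) is indispensable, is precisely the passage from minimum to infimum: in infinite dimensions the minimum need not be attained, so the strict gain in Proposition~\ref{prop1.1'} can degenerate in the limit, exactly as the Remark following Corollary~\ref{cor2'} illustrates with $A=\diag(1+1/n)$. Making the argument legitimate therefore reduces to verifying that each product entry $(XAY)_{ij}$ is a genuine sum of nonnegative reals, so that no cancellation occurs and the strict infimum inequality survives; this is automatic, since $A$ is a bounded operator whose nonzero entries exceed $1$, which forces every row and column to contain only finitely many nonzero entries and hence the defining sums to be finite.
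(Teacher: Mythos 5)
Your proposal is correct and takes essentially the same approach as the paper: the paper explicitly leaves this corollary to the reader as the infinite-dimensional transplant of the proof of Corollary \ref{1.1 cor'}, with Proposition \ref{prop1.2'} replacing Corollary \ref{cor2'}, and that is exactly what you carry out (down to the same implicit use of $A^*\neq A$ to rule out $X=Y=I$, and the same brevity about the ``less than $-1$'' case). Your closing observations---that boundedness together with nonzero entries exceeding $1$ forces finitely many nonzero entries in each row and column, and that nonnegativity prevents cancellation in the product sums---supply precisely the routine verification the paper omits.
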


\section{Data availability}
Data sharing not applicable to this article as no datasets were generated or analyzed during the current
study.
\section{Declarations}
The first author was supported by Science and Engineering Research Board, Core Research Grant
002514. The last author was partially supported by Simons Foundation collaboration grants 245014 and
636554. The authors have no other conflicts of interest to report.

\section*{References}

\begin{biblist}
\bib{RR}{article}{
    AUTHOR = {Rajendra Bhatia and Peter Rosenthal},
     TITLE = {How and why to solve the operator equation {$AX-XB=Y$}},
   JOURNAL = {Bull. London Math. Soc.},
  FJOURNAL = {The Bulletin of the London Mathematical Society},
    VOLUME = {29},
      YEAR = {1997},
    NUMBER = {1},
     PAGES = {1--21},
      ISSN = {0024-6093},
   MRCLASS = {47A62 (15A24 47B15 47B40 65J10)},
  MRNUMBER = {1416400},
MRREVIEWER = {Robert L. Moore},
       DOI = {10.1112/S0024609396001828},
       URL = {https://doi.org/10.1112/S0024609396001828},
}

\bib{FW}{article}{
    AUTHOR = {Harley Flanders and Harald K. Wimmer},
     TITLE = {On the matrix equations {$AX-XB=C$} and {$AX-YB=C$}},
   JOURNAL = {SIAM J. Appl. Math.},
  FJOURNAL = {SIAM Journal on Applied Mathematics},
    VOLUME = {32},
      YEAR = {1977},
    NUMBER = {4},
     PAGES = {707--710},
      ISSN = {0036-1399},
   MRCLASS = {15A24},
  MRNUMBER = {447284},
MRREVIEWER = {Marvin Marcus},
       DOI = {10.1137/0132058},
       URL = {https://doi.org/10.1137/0132058},
}

\bib{GK}{book}{
    AUTHOR = {I. C. Gohberg and M. G. Kre\u{\i}n},
     TITLE = {Introduction to the theory of linear nonselfadjoint operators},
    SERIES = {Translations of Mathematical Monographs, Vol. 18},
      NOTE = {Translated from the Russian by A. Feinstein},
 PUBLISHER = {American Mathematical Society, Providence, R.I.},
      YEAR = {1969},
     PAGES = {xv+378},
   MRCLASS = {47.10},
  MRNUMBER = {0246142},
}

\bib{Hal82}{book}
{
    AUTHOR = {Halmos, Paul Richard},
     TITLE = {A {H}ilbert space problem book},
    SERIES = {Encyclopedia of Mathematics and its Applications},
    VOLUME = {19},
   EDITION = {Second},
 PUBLISHER = {Springer-Verlag, New York-Berlin},
      YEAR = {1982},
     PAGES = {xvii+369},
      ISBN = {0-387-90685-1},
   MRCLASS = {47-01 (46-01)},
  MRNUMBER = {675952},
MRREVIEWER = {J. Weidmann},
}

\bib{HeyPop}{article}
{
AUTHOR = {Alexey I. Popov and  Heydar Radjavi},
     TITLE = {Semigroups of partial isometries},
   JOURNAL = {Semigroup Forum},
  FJOURNAL = {Semigroup Forum},
    VOLUME = {87},
      YEAR = {2013},
    NUMBER = {3},
     PAGES = {663--678},
      ISSN = {0037-1912},
   MRCLASS = {20M20 (47D03)},
  MRNUMBER = {3128716},
MRREVIEWER = {Ahmet A. Husainov},
       DOI = {10.1007/s00233-013-9487-6},
       URL = {https://doi.org/10.1007/s00233-013-9487-6},
}

\bib{SO}{article}{
    AUTHOR = {M. I. Ostrovskii and V. S. Shulman},
     TITLE = {Weak operator topology, operator ranges and operator equations
              via {K}olmogorov widths},
   JOURNAL = {Integral Equations Operator Theory},
  FJOURNAL = {Integral Equations and Operator Theory},
    VOLUME = {65},
      YEAR = {2009},
    NUMBER = {4},
     PAGES = {551--572},
      ISSN = {0378-620X},
   MRCLASS = {47A05 (41A46 47A62)},
  MRNUMBER = {2576309},
MRREVIEWER = {Jan Vyb\'{\i}ral},
       DOI = {10.1007/s00020-009-1691-0},
       URL = {https://doi.org/10.1007/s00020-009-1691-0},
}

\bib{PW21}{article}		
{AUTHOR = {S. Patnaik and G. Weiss},
     TITLE = {Interplay of simple and selfadjoint-ideal semigroups in
              {$B(H)$}},
   JOURNAL = {Operators and Matrices},
  FJOURNAL = {Operators and Matrices},
    VOLUME = {15},
      YEAR = {2021},
    NUMBER = {3},
     PAGES = {815--851},
      ISSN = {1846-3886},
   MRCLASS = {47B15 (20M12 47D03 47L20)},
  MRNUMBER = {4364574},
       DOI = {10.7153/oam-2021-15-56},
       URL = {https://doi.org/10.7153/oam-2021-15-56},
}

\bib{PWS}{article}
{AUTHOR = {S. Patnaik, Sanehlata\, , and G. Weiss},
     TITLE = {Singly generated selfadjoint-ideal operator semigroups: spectral density of the generator and simplicity},
   JOURNAL = {Preprint},
}

\end{biblist}

\end{document}